\newcommand{\so}{\mathfrak{so}}
\newcommand{\g}{\mathfrak{g}}
\newcommand{\Sg}{\mathcal{S}}
\newcommand{\Tg}{\mathcal{T}}
\newcommand{\Yg}{\mathcal{Y}}
\newcommand{\Zg}{\mathcal{Z}}
\newcommand{\Hor}{\mathcal{H}}
\newcommand{\Exp}{\text{Exp}}
\newcommand{\Log}{\text{Log}}
\newcommand{\ext}{\text{ext}}
\newcommand{\Bi}{\text{Bi}}
\newcommand{\HorLC}{\tilde{\nabla}^{\Hor}}
\newcommand{\HorD}{\tilde{D}^{\Hor}}
\newcommand{\hcon}{\tilde{\nabla}^{\mathfrak{h}}}
\newcommand{\N}{\mathbb{N}}
\newcommand{\R}{\mathbb{R}}
\newcommand{\grad}{\text{grad}}
\newcommand{\tr}{\text{tr}}
\newcommand{\SO}{\text{SO}}
\newcommand{\ad}{\text{ad}}
\newcommand{\Ad}{\text{Ad}}
\newcommand{\I}{\mathbb{I}}
\newcommand{\todo}[1]{
  \vspace{5mm}\par \noindent
  \marginpar{\textsc{ToDo}}
  \framebox{
    \begin{minipage}[c]{0.95\linewidth}
      \tt #1
    \end{minipage}
  }
  \vspace{5mm}\par
}
\setlist[enumerate]{leftmargin=.5in}
\setlist[itemize]{leftmargin=.5in}
\crefname{hypothesis}{Hypothesis}{Hypotheses}
\title{Reduction by Symmetry in Obstacle Avoidance Problems on Riemannian Manifolds\thanks{Submitted to the editors DATE.
\funding{The project that gave rise to these results received the support of a fellowship from ”la Caixa” Foundation (ID 100010434). The fellowship code is LCF/BQ/DI19/11730028. Additionally, support has been given by the ``Severo Ochoa Programme for Centres of Excellence''in R$\&$D (CEX2019-000904-S). The authors acknowledge financial support from Grant PID2019-106715GB-C21 funded by MCIN/AEI/ 10.13039/501100011033.}}}
\author{Jacob R. Goodman\thanks{Instituto de Ciencias Matematicas
(CSIC-UAM-UC3M-UCM), Calle Nicolas Cabrera 13-15, 28049, Madrid, Spain 
  (\email{jacob.goodman@icmat.es}).}\and Leonardo J. Colombo\thanks{Centre for Automation and Robotics (CSIC-UPM), 28500
Madrid, Spain 
  (\email{leonardo.colombo@car.upm-csic.es}).}}
\newcommand*{\addFileDependency}[1]{% argument=file name and extension
  \typeout{(#1)}% latexmk will find this if $recorder=0 (however, in that case, it will ignore #1 if it is a .aux or .pdf file etc and it exists! if it doesn't exist, it will appear in the list of dependents regardless)
  \@addtofilelist{#1}% if you want it to appear in \listfiles, not really necessary and latexmk doesn't use this
  \IfFileExists{#1}{}{\typeout{No file #1.}}% latexmk will find this message if #1 doesn't exist (yet)
}
\newcommand*{\myexternaldocument}[1]{%
    \externaldocument{#1}%
    \addFileDependency{#1.tex}%
    \addFileDependency{#1.aux}%
}
\begin{document}

\maketitle

% REQUIRED
\begin{abstract}
 This paper studies the reduction by symmetry of a variational obstacle avoidance problem. We derive the reduced necessary conditions in the case of Lie groups endowed with a left-invariant metric, and for its corresponding Riemannian homogeneous spaces by considering an alternative variational problem written in terms of a connection on the horizontal bundle of the Lie group. A number of special cases where the obstacle avoidance potential can be computed explicitly are studied in detail, and these ideas are applied to the obstacle avoidance task for a rigid body evolving on $\SO(3)$ and for the unit sphere $S^2$.
\end{abstract}

% REQUIRED
\begin{keywords}
  Variational obstacle avoidance, path-planning, Euler-Poincar\'e reduction, symmetry breaking, Riemannian cubic polynomials
\end{keywords}

% REQUIRED
\begin{AMS}
  68Q25, 68R10, 68U05
\end{AMS}

\section{Introduction}
In the last decades, energy-optimal path planning on nonlinear spaces such as Riemannian manifolds has emerged as an active field of interest due to its numerous applications in manufacturing, imaging, and robotics \cite{bonard, hussein}. Most frequently, one typically wishes to construct paths connecting some set of knot points—interpolating some set of given positions velocities. Though it is sometimes desirable to interpolate higher order velocities as well. The use of variationally defined curves has a rich history due to the regularity and optimal nature of the solutions. In particular, the so-called Riemannian {cubic} splines are a particularly pervasive interpolant, which themselves are composed of Riemannian {cubic} polynomials—curves which minimize the total squared (covariant) acceleration among all sufficiently regular curves satisfying some boundary conditions in positions and velocities—that are subsequently glued together to create a spline. Riemannian cubic polynomials carry have deep connections with Riemannian geometry, which often parallels the theory of geodesics. This has been studied extensively in the literature (see \cite{noakes, CroSil:95, Giambo, Margarida} for a detailed account of Riemannian cubics and \cite{RiemannianPoly, marg} for some results with higher-order Riemannian polynomials).

In practical applications, it is often the case that there are obstacles or regions in space which need to be avoided. In this case, the action functional may be augmented with an artificial potential term which grows inversely to the Riemannian distance from the obstacle. This strategy was employed for instance in \cite{BlCaCoCDC, BlCaCoIJC}, where necessary conditions for extrema in obstacle avoidance problems on Riemannian manifolds were derived, and applications to interpolation problems on manifolds and to energy-minimum problems on Lie groups and symmetric spaces endowed with a bi-invariant metric were studied. Similar strategies have been implemented for collision avoidance problems for multi-agent systems evolving on Riemannian problems, as in \cite{mishal}. Existence of global minimizers and safety guarantees were studied in \cite{goodman2021obstacle} for the obstacle avoidance problem, and \cite{goodman2022collision} for the collision avoidance problem. Sufficient conditions for optimality in obstacle avoidance problems—analogous to the theory of Jacobi Fields for geodesics—was studied in \cite{goodman2022sufficient}. 

The major drawback of utilizing variationally defined curves in the task of path planning is that they are frequently impossible to compute explicitly, and often computationally expensive to approximate numerically. Fortunately, many applications evolve naturally on Lie groups (such as $\SO(3)$ and $\text{SE}(3)$) and homogeneous spaces (such as $S^2$). Such spaces have special structures that allow the variational principles (and their resulting necessary conditions for optimality, often called the Euler-Lagrange equations) to be reduced. In particular, the Euler-Lagrange equations reduce to some system of equations, often referred to as the Euler-Poincar\'e equations, which evolve on a vector space. Hence, the reduced equations are well suited to numerical integration schemes, from which the corresponding solution to the Euler-Lagrange equations can be obtained via a reconstruction equation. Reduction by symmetry for Riemannian cubics has been studied in \cite{Altafini}.

The inclusion of potentials in the action functional complicates the reduction process. First, the potential must admit certain symmetries to allow for reduction. In the literature, this typically involves the existence of a space of parameters on which the underlying Lie group acts, and a parameter dependent extension of the potential which is invariant under some Lie group action and agrees with the original potential for some particular choice of parameter. Such problems are typically referred to as reduction with symmetry breaking. In the case that the underlying Riemannian manifold is a Lie group, the problem of reduction with symmetry breaking has been studied in \cite{gay2012invariant} for variational principles of arbitrary order and where the parameter space is a manifold. For homogeneous spaces, reduction with symmetry breaking has only been studied for variational principles of order $1$ \cite{vizman}. Reduction with symmetry breaking for the particular case of obstacle avoidance potential has not been addressed in the literature, though a related first-order Optimal control problem with symmetry breaking has been studied in \cite{OptConSym}. A particular challenge for reduction with symmetry breaking in the obstacle avoidance problem comes from practical considerations. Namely, the Riemannian distance function is difficult—if not impossible—to compute explicitly outside of certain special cases (such as Lie groups with bi-invariant metrics and Riemannian symmetric spaces), and many applications (rigid body motions, for example) do not possess such structures. Even numerical approximations are very costly, rendering the strategy undesirable for the purpose of path-planning. The primary motivation for this paper is to extend the notion of reduction with symmetry breaking to the obstacle avoidance problem, and to do so in such a way that the generated trajectories may be found in a cost-effective manner—amplifying considerably the class of possible applications. 

The main contributions of this paper are as follows. (i) In Proposition \ref{prop: reduction_left_inv}, necessary conditions for reduction with symmetry breaking for the obstacle avoidance task are derived in the case of a Lie group endowed with a left-invariant metric. (ii) In Corollary \ref{cor: reduction_bi_inv}, these results are specialized to the case of a Lie group endowed with a left-invariant metric. It is shown how the bi-invariant metric allows the Riemannian distance to be computed explicitly. Then (iii), in Corollary \ref{prop: reduction_left_inv_comp}, another particular case is considered where the Lie group is endowed with both a left-invariant (from which dynamics are derived) and bi-invariant metric (with respect to which the Riemannian distance is defined). In particular, this allows to compute the Riemannian distance (and thus the potential) explicitly even in the case where the necessary conditions are expressed with respect to a left-invariant (but not bi-invariant) metric. (iv) In Section \ref{section: rigid_body_cubics}, these principles are applied to reduce the necessary conditions for optimality corresponding to a rigid body evolving on $\SO(3)$ with an obstacle avoidance potential. In Section \ref{Sec: Reduction_Homo}, we then turn our attention to homogeneous spaces of Lie groups endowed with left-invariant metrics. (v) In Proposition \ref{thm: necessary_homo_G}, we derive necessary conditions for optimality corresponding to the obstacle avoidance task, written in terms of a connection on the horizontal bundle. This formalism differs from the literature, which considers the Levi-Civita connection. (vi) In Proposition \ref{prop: reduction_left_inv_H}, we then reduce the necessary conditions for optimality with symmetry breaking. We then consider a number of particular cases in Section \ref{Sec: homo_bi}. In particular, (vii) in Proposition \ref{prop: reduction_bi_inv_H}, we consider the reduced necessary conditions for optimality with symmetry breaking for homogeneous spaces of Lie groups with bi-invariant metrics, and of Lie groups equipped with both left-invariant and bi-invariant metrics. (viii) In Proposition \ref{prop: reduction_sym_left}, we find the reduced necessary conditions for optimality with symmetry breaking for Riemannian symmetric spaces with left-invariant, bi-invariant metrics, and both left-invariant and bi-invariant metrics. Finally, (ix) we study the particular case of obstacle avoidance on homogeneous spaces, and in Section \ref{Section: S^2} we apply the results to the case of the unit sphere $S^2$ endowed with the round metric.

The paper is organized as follows. In Section \ref{Sec: background}, we provide the necessary background for Riemannian manifolds, and in Section \ref{sec: background_Lie} we specialize the discussion to left-invariant Riemannian metrics on Lie groups. In Theorem \ref{thm: EP_geo}, the Euler-Poincar\'e equations for a geodesic are derived using our formalism, and in Section \ref{section: example_geo_SO3}, they are applied to the special case of a rigid body evolving on $\SO(3)$. In section \ref{Sec: Necessary conditions}, we discuss the general framework for the variational obstacle avoidance problem, and in Section \ref{reduction_obs_avoid_G}, we reduce the necessary conditions for optimality with the symmetry breaking obstacle avoidance potential on Lie groups endowed with left-invariant metrics. In Section \ref{sec: red_biinv}, we then specialize the discussion to Lie groups which admit bi-invariant metrics. In Section \ref{section: rigid_body_cubics}, we return to the example of a rigid body evolving on $\SO(3)$, where we study the corresponding obstacle avoidance problem. Section \ref{Sec: Reduction_Homo} regards reduction with symmetry breaking on Riemannian homogeneous spaces, where the underlying Lie group is endowed with a left-invariant metric. In Section \ref{Sec: homo_bi}, we consider the special cases where the Lie group admits a bi-invariant metric, and where the Riemannian homogeneous space is in fact a Riemannian symmetric space. Finally, Section \ref{Section: S^2} applies these principles to obstacle avoidance on the unit sphere $S^2$.

\section{Riemannian Manifolds}\label{Sec: background}
Let $(Q, \left< \cdot, \cdot\right>)$ be an $n$-dimensional \textit{Riemannian
manifold}, \newline where $Q$ is an $n$-dimensional smooth manifold and $\left< \cdot, \cdot \right>$ is a positive-definite symmetric covariant 2-tensor field called the \textit{Riemannian metric}. That is, to each point $q\in Q$ we assign a positive-definite inner product $\left<\cdot, \cdot\right>_q:T_qQ\times T_qQ\to\mathbb{R}$, where $T_qQ$ is the \textit{tangent space} of $Q$ at $q$ and $\left<\cdot, \cdot\right>_q$ varies smoothly with respect to $q$. The length of a tangent vector is determined by its norm, defined by
$\|v_q\|=\left<v_q,v_q\right>^{1/2}$ with $v_q\in T_qQ$. For any $p \in Q$, the Riemannian metric induces an invertible map $\cdot^\flat: T_p Q \to T_p^\ast Q$, called the \textit{flat map}, defined by $X^\flat(Y) = \left<X, Y\right>$ for all $X, Y \in T_p Q$. The inverse map $\cdot^\sharp: T_p^\ast Q \to T_p Q$, called the \textit{sharp map}, is similarly defined implicitly by the relation $\left<\alpha^\sharp, Y\right> = \alpha(Y)$ for all $\alpha \in T_p^\ast Q$. Let $C^{\infty}(Q)$ and $\Gamma(TQ)$ denote the spaces of smooth scalar fields and smooth vector fields on $Q$, respectively. The sharp map provides a map from $C^{\infty}(Q) \to \Gamma(TQ)$ via $\grad f(p) = df_p^\sharp$ for all $p \in Q$, where $\grad f$ is called the \textit{gradient vector field} of $f \in C^{\infty}(Q)$. More generally, given a map $V: Q \times \cdots \times Q \to \R$ (with $m$ copies of $Q$), we may consider the gradient vector field of $V$ with respect to $i^{\text{th}}$ component as $\grad_i V(q_1, \dots, q_m) = \grad U(q_i)$, where $U(q) = V(q_1, \dots, q_{i-1}, q, q_{i+1}, \dots, q_m)$ for all $q, q_1, \dots, q_m \in Q$.

Vector fields are a special case of smooth sections of vector bundles. In particular, given a vector bundle $(E, Q, \pi)$ with total space $E$, base space $Q$, and projection $\pi: E \to Q$, where $E$ and $Q$ are smooth manifolds, a \textit{smooth section} is a smooth map $X: Q \to E$ such that $\pi \circ X = \text{id}_Q$, the identity function on $Q$. We similarly denote the space of smooth sections on $(E, Q, \pi)$ by $\Gamma(E)$. A \textit{connection} on $(E, Q, \pi)$ is a map $\nabla: \Gamma(TQ) \times \Gamma(E) \to \Gamma(TQ)$ which is $C^{\infty}(Q)$-linear in the first argument, $\R$-linear in the second argument, and satisfies the product rule $\nabla_X (fY) = X(f) Y + f \nabla_X Y$ for all $f \in C^{\infty}(Q), \ X \in \Gamma(TQ), \ Y \in \Gamma(E)$. The connection plays a role similar to that of the directional derivative in classical real analysis. The operator
$\nabla_{X}$ which assigns to every smooth section $Y$ the vector field $\nabla_{X}Y$ is called the \textit{covariant derivative} (of $Y$) \textit{with respect to $X$}. Connections induces a number of important structures on $Q$, a particularly ubiquitous such structure is the \textit{curvature endomorphism}, which is a map $R: \Gamma(TQ) \times \Gamma(TQ) \times \Gamma(E) \to \Gamma(TQ)$ defined by $R(X,Y)Z := \nabla_{X}\nabla_{Y}Z-\nabla_{Y}\nabla_{X}Z-\nabla_{[X,Y]}Z$ for all $X, Y \in \Gamma(TQ), \ Z \in \Gamma(E)$. Qualitatively, the curvature endomorphism measures the extent to which covariant derivatives commute with one another. We further define the \textit{curvature tensor} $\text{Rm}$ on $Q$ via $\text{Rm}(X, Y, Z, W) := \left<R(X, Y)Z, W\right>$.

We now specialize our attention to \textit{affine connections}, which are connections on $TQ$. Let $q: I \to Q$ be a smooth curve parameterized by $t \in I \subset \R$, and denote the set of smooth vector fields along $q$ by $\Gamma(q)$. Then for any affine connection $\nabla$ on $Q$, there exists a unique operator $D_t: \Gamma(q) \to \Gamma(q)$ (called the \textit{covariant derivative along $q$}) which agrees with the covariant derivative $\nabla_{\dot{q}}\tilde{W}$ for any extension $\tilde{W}$ of $W$ to $Q$. A vector field $X \in \Gamma(q)$ is said to be \textit{parallel along $q$} if $\displaystyle{D_t X\equiv 0}$. For $k\in \N$, the $k$th-order covariant derivative of $W$ along $q$, denoted by $\displaystyle{D_t^k W}$, can then be inductively defined by $\displaystyle{D_t^k W = D_t \left(D_t^{k-1} W\right)}$. 

The covariant derivative allows to define a particularly important family of smooth curves on $Q$ called \textit{geodesics}, which are defined as the smooth curves $\gamma$ satisfying $D_t \dot{\gamma} = 0$. Moreover, geodesics induce a map $\mathrm{exp}_q:T_qQ\to Q$ called the \textit{exponential map} defined by $\mathrm{exp}_q(v) = \gamma(1)$, where $\gamma$ is the unique geodesic verifying $\gamma(0) = q$ and $\dot{\gamma}(0) = v$. In particular, $\mathrm{exp}_q$ is a diffeomorphism from some star-shaped neighborhood of $0 \in T_q Q$ to a convex open neighborhood $\mathcal{B}$ (called a \textit{goedesically convex neighborhood}) of $q \in Q$. It is well-known that the Riemannian metric induces a unique torsion-free and metric compatible connection called the \textit{Riemannian connection}, or the \textit{Levi-Civita connection}. Along the remainder of this paper, we will assume that $\nabla$ is the Riemannian connection. For additional information on connections and curvature, we refer the reader to \cite{Boothby,Milnor}. When the covariant derivative $D_t$ corresponds to the Levi-Civita connection, geodesics can also be characterized as the critical points of the length functional $\displaystyle{L(\gamma) = \int_0^1 \|\dot{\gamma}\|dt}$ among all unit-speed \textit{piece-wise regular} curves $\gamma: [a, b] \to Q$ (that is, where there exists a subdivision of $[a, b]$ such that $\gamma$ is smooth and satisfies $\dot{\gamma} \ne 0$ on each subdivision). Equivalently, we may characterize geodesics by the critical points of the \textit{energy functional} $\displaystyle{\mathcal{E} = \frac12 \int_a^b \|\dot{\gamma}\|^2 dt}$ among all $C^1$ piece-wise smooth curves $\gamma: [a, b] \to Q$ parameterized by arc-length. The length functional induces a metric $d: Q \times Q \to \R$ called the \textit{Riemannian distance} via $d(p, q) = \inf\{L(\gamma): \ \gamma \ \text{regular, } \gamma(a) = p, \ \gamma(b) = q\}$.

If we assume that $Q$ is \textit{complete} (that is, $(Q, d)$ is a complete metric space), then by the Hopf-Rinow theorem, any two points $x$ and $y$ in $Q$ can be connected by a (not necessarily unique) minimal-length geodesic $\gamma_{x,y}$. In this case, the Riemannian distance between $x$ and $y$ can be defined by $\displaystyle{d(x,y)=\int_{0}^{1}\Big{\|}\frac{d \gamma_{x,y}}{d s}(s)\Big{\|}\, ds}$. Moreover, if $y$ is contained in a geodesically convex neighborhood of $x$, we can write the Riemannian distance by means of the Riemannian exponential as $d(x,y)=\|\mbox{exp}_x^{-1}y\|.$ 

Given $\xi = (q_a, v_a), \eta = (q_b, v_b) \in TQ$ with $a < b \in \R$, we will say that a curve $\gamma: [a, b] \to Q$ is \textit{admissible} if it is $C^1$, piece-wise smooth, and satisfied $\gamma(a) = q_a, \ \gamma(b) = q_b, \ \dot{\gamma}(a) = v_a, \ \dot{\gamma}(b) = v_b$ by $\Omega_{\xi, \eta}^{a, b}$. The space of admissible curves will be denoted by $\Omega$, and it has the structure of a smooth infinite-dimensional manifold. Its tangent space $T_x \Omega$ consists of all $C^1$, piece-wise smooth vector fields $X$ along $x$ satisfying $X(a) = X(b) = D_tX(a) = D_tX(b) = 0$.

An \textit{admissible variation} of $x$ is a family of curves $\Gamma: (-\epsilon, \epsilon) \times [a, b] \to Q$ such that:
\begin{enumerate}
    \item $\Gamma(r, \cdot) \in \Omega$ for all $r \in (-\epsilon, \epsilon)$,
    \item $\Gamma(0, t) = x(t)$ for all $t \in [a, b]$.
\end{enumerate}
The variational vector field corresponding to $\Gamma$ is defined by $\partial_s \Gamma(0, t)$. An \textit{admissible proper variation} of $x$ is an admissible variation such that 
$\Gamma(s, a) = x(a)$ and $\Gamma(s, b) = x(b)$ for all $s \in (-\epsilon, \epsilon)$. It can be seen that any $X \in T_x \Omega$ is the variational vector field of some admissible proper variation, and conversely, the variational vector field on any admissible proper variation belongs to $T_x \Omega$. 

\subsection{Riemannian geometry on Lie Groups}\label{sec: background_Lie}

Let $G$ be a Lie group with Lie algebra $\g := T_{e} G$, where $e$ is the identity element of $G$. The left-translation map $L: G \times G \to G$ provides a group action of $G$ on itself under the relation $L_{g}h := gh$ for all $g, h \in G$. Given any inner-product $\left< \cdot, \cdot \right>_{\g}$ on $\g$, left-translation provides us with a Riemannian metric $\left< \cdot, \cdot \right>$ on $G$ via the relation:
\begin{align*}
    \left< X_g, Y_g \right> := \left< L_{g^{-1}\ast} X_g, L_{g^{-1}\ast} Y_g \right>_{\g},
\end{align*}
for all $g \in G, X_g, Y_g \in T_g G$, where the notation $L_{g^{\ast}}$ stands for the push-forward of $L_g$, which is well-defined because $L_g: G \to G$ is a diffeomorphism for all $g \in G$. Such a Riemannian metric is called \textit{left-invariant}, and it follows immediately that there is a one-to-one correspondence between left-invariant Riemannian metrics on $G$ and inner products on $\g$, and that $L_g: G \to G$ is an isometry for all $g \in G$ by construction. Any Lie group equipped with a left-invariant metric is complete as a Riemannian manifold. In the remainder of the section, we assume that $G$ is equipped with a left-invariant Riemannian metric.

We call a vector field $X$ on $G$ \textit{left-invariant} if $L_{g\ast} X = X$ for all $g \in G$, and we denote the set of all left-invariant vector fields on $G$ by $\mathfrak{X}_L(G)$. It is well-known that the map $\phi: \g \to \mathfrak{X}_L(G)$ defined by $\phi(\xi)(g) = L_{g\ast} \xi$ for all $\xi \in \g, g \in G$ is an isomorphism between vector spaces. This isomorphism allows us to construct an operator $\nabla^{\g}: \g \times \g \to \g$ defined by:
\begin{align}
    \nabla^{\g}_{\xi} \eta := \nabla_{\phi(\xi)} \phi(\eta)(e),\label{g-connection}
\end{align}
for all $\xi, \eta \in \g$, where $\nabla$ is the Levi-Civita connection on $G$ corresponding to the left-invariant Riemannian metric $\left< \cdot, \cdot \right>$. Although $\nabla^{\g}$ is not a connection, we shall refer to it as the \textit{Riemannian $\g$-connection} corresponding to $\nabla$ because of the similar properties that it satisfies:
\begin{lemma}\label{lemma: covg_prop}
$\nabla^\g: \g \times \g \to \g$ is $\R$-bilinear, and for all $\xi, \eta, \sigma \in \g$, the following relations hold:
\begin{enumerate}
    \item $\nabla_{\xi}^{\g} \eta - \nabla_{\eta}^{\g} \xi = \left[ \xi, \eta \right]_{\g}$,
    \item $\left< \nabla_{\sigma}^{\g} \xi, \eta \right> + \left<  \xi, \nabla_{\sigma}^{\g}\eta \right> = 0$.
\end{enumerate}
\end{lemma}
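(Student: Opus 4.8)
The plan is to derive every assertion directly from the three defining properties of the Levi-Civita connection $\nabla$ on $G$ — namely its $C^{\infty}(G)$-linearity in the first argument (together with $\R$-linearity in the second), its torsion-freeness, and its metric compatibility — transported to $\g$ via the linear isomorphism $\phi: \g \to \mathfrak{X}_L(G)$ followed by evaluation at $e$. For $\R$-bilinearity: since $\phi$ is linear, $\nabla$ is in particular $\R$-linear in its first slot and $\R$-linear in its second slot, and the evaluation map $X \mapsto X(e)$ is linear, the composition $(\xi,\eta) \mapsto \nabla_{\phi(\xi)}\phi(\eta)(e)$ is $\R$-bilinear into $\g$; this also confirms that \eqref{g-connection} defines $\nabla^\g$ unambiguously.

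For the first identity I would use that the bracket of two left-invariant vector fields is again left-invariant and that, by the construction of the Lie algebra bracket on $\g = T_e G$, one has $[\phi(\xi),\phi(\eta)] = \phi([\xi,\eta]_\g)$ for all $\xi,\eta\in\g$. Since $\nabla$ is torsion-free, $\nabla_{\phi(\xi)}\phi(\eta) - \nabla_{\phi(\eta)}\phi(\xi) = [\phi(\xi),\phi(\eta)] = \phi([\xi,\eta]_\g)$ as an identity of vector fields on $G$; evaluating at $e$ and invoking \eqref{g-connection} yields $\nabla^\g_\xi\eta - \nabla^\g_\eta\xi = [\xi,\eta]_\g$.

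For the second identity the key observation is that for left-invariant vector fields $\phi(\xi),\phi(\eta)$ the scalar field $g \mapsto \left<\phi(\xi)(g),\phi(\eta)(g)\right>$ is \emph{constant} on $G$: by left-invariance of both the metric and the vector fields it equals $\left<\xi,\eta\right>_\g$ at every point. Hence its derivative along any vector field vanishes, in particular $\phi(\sigma)\left<\phi(\xi),\phi(\eta)\right> \equiv 0$. Metric compatibility of $\nabla$ then gives $\left<\nabla_{\phi(\sigma)}\phi(\xi),\phi(\eta)\right> + \left<\phi(\xi),\nabla_{\phi(\sigma)}\phi(\eta)\right> = 0$ identically on $G$; evaluating at $e$, where $\left<\cdot,\cdot\right>$ restricts to $\left<\cdot,\cdot\right>_\g$, and using \eqref{g-connection} produces $\left<\nabla^\g_\sigma\xi,\eta\right> + \left<\xi,\nabla^\g_\sigma\eta\right> = 0$.

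I do not expect any step to present a genuine obstacle; the proof is essentially bookkeeping. The only points that require care are that evaluation at $e$ is compatible with the defining formula \eqref{g-connection} for $\nabla^\g$, that $[\phi(\xi),\phi(\eta)] = \phi([\xi,\eta]_\g)$ is a structural fact (indeed, a definition) about the Lie algebra rather than something to be verified here, and that left-invariance of the metric is precisely what makes the inner product of two left-invariant vector fields a constant function, so that its directional derivatives vanish and metric compatibility collapses to the skew-symmetry statement.
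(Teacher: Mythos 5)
Your proposal is correct and follows essentially the same route as the paper's proof: bilinearity from the defining formula \eqref{g-connection}, the first identity from torsion-freeness of $\nabla$ together with $[\phi(\xi),\phi(\eta)](e)=[\xi,\eta]_\g$, and the second from metric compatibility plus the observation that $\left<\phi(\xi),\phi(\eta)\right>$ is constant by left-invariance. No gaps to report.
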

\begin{proof}
The $\R$-bilinearity of $\nabla^\g$ follows immediately from \eqref{g-connection}. For $1.$, observe that: 
\begin{align*}
    \nabla_{\xi}^{\g} \eta - \nabla_{\eta}^{\g} \xi &= \left(\nabla_{\phi(\xi)} \phi(\eta) - \nabla_{\phi(\eta)} \phi(\xi)\right)(e) \\
    &= \left[\phi(\xi), \phi(\eta)\right](e) \\
    &= \left[\xi, \eta\right]_{\g}
\end{align*}
since $\nabla$ is the torsion-free. For $2.$, note that since $\nabla$ is compatible with $\left< \cdot, \cdot \right>$, we have that 
\begin{align*}
    \mathcal{L}_{\phi(\sigma)} \left< \phi(\xi), \phi(\eta)\right>(e) &= \left< \nabla_{\phi(\sigma)} \phi(\xi)(e), \phi(\eta)(e) \right> + \left< \phi(\xi)(e), \nabla_{\phi(\sigma)}\phi(\eta)(e) \right> \\
    &=\left<\nabla_{\sigma}^{\g} \xi, \eta \right> + \left< \xi, \nabla_{\sigma}^{\g}\eta \right>,
\end{align*} where $\mathcal{L}$ stands for the Lie derivative of vector fields.
But since $\phi(\xi), \phi(\eta)$ are left-invariant, $\left< \phi(\xi), \phi(\eta)\right>$ is a constant function, so  $\mathcal{L}_{\phi(\sigma)} \left< \phi(\xi), \phi(\eta)\right> \equiv 0$. \end{proof} %$\hfill\square$

\begin{remark}\label{remark: covg_operator}
We may consider the Riemannian $\g$-connection as an operator\newline $\nabla^\g: C^{\infty}([a, b], \g)\times C^{\infty}([a, b], \g) \to C^{\infty}([a, b], \g)$ in a natural way,  namely, if $\xi, \eta \in C^{\infty}([a, b], \g)$, we can write $(\nabla^\g_{\xi} \eta)(t) := \nabla^\g_{\xi(t)}\eta(t)$ for all $t \in [a, b]$. With this notation, Lemma \ref{lemma: covg_prop} works identically if we replace $\xi, \eta, \sigma \in \g$ with $\xi, \eta, \sigma \in C^{\infty}([a, b], \g)$.\hfill$\diamond$
\end{remark}

%\todo{ad and ad$^{*}$ were never defined}

We denote $ad^\dagger_{\xi} \eta = (\ad^\ast_{\xi} \eta^\flat)^\sharp$ for all $\xi, \eta \in \g$, which leads to the following decomposition of $\nabla^\g$ (see Theorem $5.40$ of \cite{bullo2019geometric}, for instance):
\begin{lemma}\label{lemma: covg-decomp}
The Riemannian $\g$-connection can be expressed as:
\begin{align*}
    \nabla_{\xi}^\g \eta = \frac12\left( [\xi, \eta]_\g - \ad^\dagger_{\xi} \eta - \ad^\dagger_{\eta} \xi\right)
    \end{align*} for all $\xi, \eta \in \g$.
\end{lemma}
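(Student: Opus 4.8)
The plan is to reproduce the computation that yields the Koszul formula, but carried out entirely at the level of $\g$ using only the two identities established in Lemma~\ref{lemma: covg_prop}. The first preliminary step is to record how $\ad^\dagger$ pairs against the inner product: unwinding the definitions of $\ad^{\ast}$ and of the flat and sharp maps, for all $\xi, \eta, \sigma \in \g$ one has
\[
\left<\ad^\dagger_{\xi} \eta, \sigma\right> = \left<(\ad^{\ast}_{\xi} \eta^\flat)^\sharp, \sigma\right> = (\ad^{\ast}_{\xi} \eta^\flat)(\sigma) = \eta^\flat(\ad_{\xi}\sigma) = \left<\eta, [\xi,\sigma]_\g\right>.
\]

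Next I would compute $2\left<\nabla^\g_{\xi}\eta, \sigma\right>$ by a threefold cyclic expansion. Property~2 of Lemma~\ref{lemma: covg_prop} moves the $\g$-connection from one slot of the inner product to the other up to a sign, and property~1 exchanges the two arguments of a $\g$-connection at the cost of a Lie-bracket term. Alternating these operations — first on $\left<\nabla^\g_{\xi}\eta, \sigma\right>$, then on the $\left<\nabla^\g_{\sigma}\xi, \eta\right>$-type term it produces, then once more — brings back the term $\left<\nabla^\g_{\xi}\eta, \sigma\right>$ with the opposite sign together with three bracket terms, and collecting everything yields
\[
2\left<\nabla^\g_{\xi}\eta, \sigma\right> = \left<[\xi,\eta]_\g, \sigma\right> - \left<\eta, [\xi,\sigma]_\g\right> - \left<[\eta,\sigma]_\g, \xi\right>.
\]

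Finally I would rewrite the last two bracket terms using the identity from the first step, namely $\left<\eta, [\xi,\sigma]_\g\right> = \left<\ad^\dagger_{\xi}\eta, \sigma\right>$ and $\left<[\eta,\sigma]_\g, \xi\right> = \left<\ad^\dagger_{\eta}\xi, \sigma\right>$, so that $2\left<\nabla^\g_{\xi}\eta, \sigma\right> = \left<[\xi,\eta]_\g - \ad^\dagger_{\xi}\eta - \ad^\dagger_{\eta}\xi, \sigma\right>$ for every $\sigma \in \g$; nondegeneracy of the inner product then gives the claimed identity. The one genuine obstacle is the bookkeeping — keeping the signs straight through the three applications of properties~1 and~2, and arranging the expansion so that the reappearing $\left<\nabla^\g_{\xi}\eta, \sigma\right>$ term carries the sign needed to solve for it. An alternative would be to invoke the Koszul formula directly for the Levi-Civita connection on the left-invariant vector fields $\phi(\xi), \phi(\eta), \phi(\sigma)$ and note that the three ``derivative-of-the-metric'' terms vanish because $\left<\phi(\cdot), \phi(\cdot)\right>$ is constant, but the route above is preferable as it uses only what has already been proved.
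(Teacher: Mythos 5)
Your proposal is correct. The paper itself does not prove this lemma — it simply cites Theorem 5.40 of Bullo--Lewis — so your self-contained derivation is a genuinely different (and arguably preferable) route: it keeps the argument internal to the paper, using only the two properties already established in Lemma~\ref{lemma: covg_prop} plus $\R$-bilinearity. Your preliminary pairing identity $\left<\ad^\dagger_{\xi}\eta,\sigma\right>=\left<\eta,[\xi,\sigma]_\g\right>$ is right under the paper's sign convention for $\ad^\ast$ (the convention is confirmed by the $\SO(3)$ computation in Section~\ref{section: example_geo_SO3}), and the cyclic expansion does close up as you claim: applying property~2, then property~1, three times in alternation returns $-\left<\nabla^\g_\xi\eta,\sigma\right>$ plus exactly the three bracket terms $\left<[\xi,\eta]_\g,\sigma\right>-\left<\eta,[\xi,\sigma]_\g\right>-\left<[\eta,\sigma]_\g,\xi\right>$, which is the Lie-algebra-level Koszul (Milnor) formula; converting the last two terms via the pairing identity and invoking positive-definiteness of the inner product finishes the proof. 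The only caveat is the one you already flag: the sign bookkeeping must actually be written out, since the whole argument lives in those three alternations. Your alternative suggestion — Koszul on the left-invariant fields $\phi(\xi),\phi(\eta),\phi(\sigma)$ with the metric-derivative terms vanishing by constancy of $\left<\phi(\cdot),\phi(\cdot)\right>$ — is essentially what the cited reference does, so either route is legitimate; yours has the advantage of not re-entering the manifold-level machinery at all.
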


Given a basis $\{A_i\}$ of $\g$, we may write any vector field $X$ on $G$ as $X = X^i \phi(A_i)$, where $X^i: G \to \R$, where we have adopted the Einstein sum convention. If $X$ is a vector field along some smooth curve $g: [a, b] \to G$, then we may equivalently write $X = X^i g A_i$, where now $X^i: [a, b] \to \R$ and $g A_i =: L_g A_i$. We denote $\dot{X} = \dot{X}^i g A_i$, which may be written in a coordinate-free fashion via $\dot{X}(t) = \frac{d}{dt}\left(L_{g(t)^{-1 \ast}}X(t) \right)$. We now wish to understand how the Levi-Civita connection $\nabla$ along a curve is related to the Riemannian $\g$-connection $\nabla^\g$. This relation is summarized in the following result:

\begin{lemma}\label{lemma: cov-to-covg}
Consider a Lie group $G$ with Lie algebra $\g$ and left-invariant Levi-Civita connection $\nabla$. Let $g: [a,b] \to G$ be a smooth curve and $X$ a smooth vector field along $g$. Then the following relation holds for all $t \in [a, b]$:
\begin{align}
    D_t X(t) = g(t)\left(\dot{X}(t) + \nabla_{\xi}^\g \eta(t) \right).\label{eqs: Cov-to-covg}
\end{align}
\end{lemma}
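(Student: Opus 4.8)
The plan is to expand $X$ in a left-invariant frame, apply the product rule for the covariant derivative along $g$, and then use that the Levi-Civita connection of a left-invariant metric is again left-invariant, so that on left-invariant vector fields it reduces to $\nabla^\g$.

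Concretely, I would fix a basis $\{A_i\}$ of $\g$ and set $\eta(t):=L_{g(t)^{-1}\ast}X(t)=X^i(t)A_i\in\g$ and $\xi(t):=L_{g(t)^{-1}\ast}\dot g(t)=\xi^j(t)A_j\in\g$, so that along $g$ we have $X=X^i\,(\phi(A_i)\circ g)$ and $\dot g(t)=\xi^j(t)\,\phi(A_j)(g(t))$. The product rule for $D_t$ gives
\begin{align*}
D_tX(t)=\dot X^i(t)\,g(t)A_i+X^i(t)\,D_t\big(\phi(A_i)\circ g\big)(t).
\end{align*}
Since $t\mapsto\phi(A_i)(g(t))$ is the restriction of the globally defined vector field $\phi(A_i)$ to $g$, the defining property of $D_t$ together with the $C^\infty(G)$-linearity of $\nabla$ in its first argument (which lets $\nabla_v W$ make sense for a single tangent vector $v$) gives $D_t\big(\phi(A_i)\circ g\big)(t)=\nabla_{\dot g(t)}\phi(A_i)=\xi^j(t)\,\big(\nabla_{\phi(A_j)}\phi(A_i)\big)(g(t))$.

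The key step, and the only one requiring more than bookkeeping, is to show that $\nabla_{\phi(A_j)}\phi(A_i)=\phi\big(\nabla^\g_{A_j}A_i\big)$, i.e.\ that the covariant derivative of two left-invariant vector fields is left-invariant. Since the metric is left-invariant, every $L_h\colon G\to G$ is an isometry, and isometries intertwine Levi-Civita connections: $L_{h\ast}(\nabla_YZ)=\nabla_{L_{h\ast}Y}(L_{h\ast}Z)$. Taking $Y=\phi(A_j)$, $Z=\phi(A_i)$ and using $L_{h\ast}\phi(A_k)=\phi(A_k)$ shows $\nabla_{\phi(A_j)}\phi(A_i)$ is left-invariant; evaluating at $e$ and comparing with \eqref{g-connection} identifies it with $\phi(\nabla^\g_{A_j}A_i)$.

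Substituting this back, factoring out the left-translation $L_{g(t)\ast}$ (which is linear), and collecting terms via the $\R$-bilinearity of $\nabla^\g$ (Lemma~\ref{lemma: covg_prop}, in the pointwise-in-$t$ sense of Remark~\ref{remark: covg_operator}, since $X^i(t)$ and $\xi^j(t)$ are scalars at each fixed $t$) yields
\begin{align*}
D_tX(t)=g(t)\big(\dot X^i(t)A_i\big)+g(t)\big(\xi^j(t)X^i(t)\,\nabla^\g_{A_j}A_i\big)=g(t)\big(\dot X(t)+\nabla^\g_{\xi(t)}\eta(t)\big),
\end{align*}
which is \eqref{eqs: Cov-to-covg}; here $\dot X(t)$ on the right denotes the $\g$-valued curve $\dot X^i(t)A_i=\frac{d}{dt}L_{g(t)^{-1}\ast}X(t)$. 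I expect the left-invariance of $\nabla$ on left-invariant vector fields to be the crux; the rest is a routine frame computation.
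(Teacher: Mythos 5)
Your proposal is correct and follows essentially the same route as the paper's proof: expand $X$ in the left-invariant frame $\{\phi(A_i)\}$, apply the product rule for $D_t$, reduce $D_t(\phi(A_i)\circ g)$ to $\nabla_{\dot g}\phi(A_i)$, and use that left-translations are isometries so that $\nabla_{\phi(A_j)}\phi(A_i)$ is left-invariant and equals $\phi(\nabla^\g_{A_j}A_i)$, then collect terms by bilinearity. The only difference is that you spell out the isometry-intertwining argument for the left-invariance of $\nabla_{\phi(A_j)}\phi(A_i)$, which the paper asserts in one line.
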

\begin{proof} Let $\{A_i\}$ be a basis for $\g$, and suppose that $\xi(t) = g(t)^{-1} \dot{g}(t) = \xi^i(t) A_i$ and $\eta(t) = g(t)^{-1} X(t) = \eta^i(t) A_i$. Expanding out the left-hand side of \eqref{eqs: Cov-to-covg}, and using that $L_{g^{*}}(\eta_jA_j)=\eta_jL_{g^{*}}(A_j)$ we get:
\begin{align*}
    D_t X(t) &= D_t\left(\eta^j g A_j \right)(t) \\
    &= g(t)\frac{\partial \eta^j}{\partial t}(t)A_i + \eta^j(t) D_t \left(gA_j\right).
\end{align*}
Note that $g(t)A_i = \phi(A_i)(g(t))$ for all $t \in [a, b], 1 \le i \le n$. Hence, 
\begin{align*}
    D_t \left(gA_j(t)\right) &= \xi^i(t)\left(\nabla_{\phi(A_i)}\phi(A_j)  \right)(g(t)), \\
    &= g(t) \xi^i(t)\left(\nabla_{\phi(A_i)} \phi(A_j)  \right)(e) \\
    &= g(t)\xi^i(t) \nabla_{A_i}^\g A_j,
\end{align*}
since $\phi(A_i)$ is left-invariant for all $1 \le i \le n$ and left-translation is an isometry. Equation \eqref{eqs: Cov-to-covg} then follows by the bi-linearity of $\nabla^\g$ together with the definition of $\dot{X}$ with respect to the basis $\{A_i\}$.
\end{proof}

We have seen in Section \ref{Sec: background} that the critical points $g: [a, b] \to G$ of the energy functional $\mathcal{E}$ are geodesics, that is $D_t \dot{g} = 0$ on the full interval $[a, b]$. Observe that from Lemma \ref{lemma: cov-to-covg}, this implies that $L_{g^\ast} \left( \dot{\xi} + \nabla^\g_{\dot{\xi}} \dot{\xi}\right) = 0$ for $\xi := g^{-1} \dot{g}$, and since left-translation is a diffeomorphism, we obtain the following reduction of geodesics:
\begin{theorem}\label{thm: EP_geo}
Suppose that $g: [a, b] \to G$ is a geodesic, and let $\xi := g^{-1} \dot{g}$. Then, $\xi$ satisfies
\begin{align}
    \dot{\xi} + \nabla^\g_{\dot{\xi}} \dot{\xi} = 0\label{EP: geo}
\end{align}
on $[a, b]$.
\end{theorem}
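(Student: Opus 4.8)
The plan is to read Theorem \ref{thm: EP_geo} off Lemma \ref{lemma: cov-to-covg} by specializing that lemma to the velocity field of $g$ itself, and then combining it with the defining property of geodesics and the injectivity of left translation. The only thing to keep straight is the two identifications in play: the left-trivialization of $TG$, and the meaning of the dot $\dot{(\cdot)}$ as the trivialized time derivative; once these are pinned down there is essentially nothing left to do.

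First I would take $X := \dot g$ as a smooth vector field along $g$ and feed it into Lemma \ref{lemma: cov-to-covg}. With this choice, the quantity denoted $\eta$ in that lemma, namely $g^{-1}X = g^{-1}\dot g$, is exactly $\xi$; and the quantity $\dot X$ appearing there, namely $\frac{d}{dt}\big(L_{g(t)^{-1}\ast}\dot g(t)\big)$, is by the coordinate-free description of the dot recorded just before that lemma nothing but $\dot\xi$. Since both the subscript slot and the argument slot of $\nabla^\g$ thus become $\xi$, equation \eqref{eqs: Cov-to-covg} specializes to
\begin{align*}
    D_t\dot g(t) = L_{g(t)\ast}\big(\dot\xi(t) + \nabla^\g_{\xi}\xi(t)\big), \qquad t \in [a,b].
\end{align*}

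Next I would invoke the characterization of geodesics from Section \ref{Sec: background}: since $g$ is a geodesic, $D_t\dot g \equiv 0$ on $[a,b]$, so the displayed identity forces $L_{g(t)\ast}\big(\dot\xi(t) + \nabla^\g_\xi\xi(t)\big) = 0$ for every $t$. Finally, because $L_{g(t)}: G\to G$ is a diffeomorphism, its differential at the identity $L_{g(t)\ast}: \g \to T_{g(t)}G$ is a linear isomorphism, hence injective, which yields $\dot\xi + \nabla^\g_\xi\xi = 0$ on $[a,b]$ --- that is, \eqref{EP: geo}. There is no genuine obstacle here: the theorem is Lemma \ref{lemma: cov-to-covg} evaluated at $X=\dot g$ together with injectivity of left translation. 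As a sanity check one can expand $\nabla^\g_\xi\xi$ via Lemma \ref{lemma: covg-decomp}: since $[\xi,\xi]_\g = 0$ one gets $\nabla^\g_\xi\xi = -\ad^\dagger_\xi\xi$, so \eqref{EP: geo} is the familiar Euler--Arnold equation $\dot\xi = \ad^\dagger_\xi\xi$, equivalently $\frac{d}{dt}\xi^\flat = \ad^\ast_\xi\xi^\flat$, for geodesics of a left-invariant metric.
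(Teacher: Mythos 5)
Your proposal is correct and follows essentially the same route as the paper: specialize Lemma \ref{lemma: cov-to-covg} to $X=\dot g$ (so $\eta=\xi$), use the geodesic condition $D_t\dot g\equiv 0$, and cancel the isomorphism $L_{g(t)\ast}$ to land on the reduced equation. Note that your form $\dot\xi+\nabla^\g_{\xi}\xi=0$ is the correct specialization and is evidently what the statement's $\nabla^\g_{\dot\xi}\dot\xi$ is meant to denote (consistent with the subsequent identification $\nabla^\g_\xi\xi=-\ad^\dagger_\xi\xi$ and the Euler--Poincar\'e form $\dot\xi^\flat=\ad^\ast_\xi\xi^\flat$), so there is no gap.
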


\begin{remark}\label{remark: lagrangian}
Equation \eqref{EP: geo} is often called the \textit{Euler-Poincar\'e equations} for geodesics. The Euler-Poincar\'e equations may be formulated more generally in the language of Lagrangian mechanics. That is, given a function $L: TG \to \R$ called the \textit{Lagrangian}, we seek to minimize the action $\mathcal{A}[g] = \displaystyle{\int_a^b L(g, \dot{g})dt}$ on $\Omega$. It is well-known that the critical points of $\mathcal{A}$ satisfy the \textit{Euler-Lagrange equations} $\displaystyle{\frac{d}{dt} \left(\frac{\partial L}{\partial \dot{g}}\right) + \frac{\partial L}{\partial g}} = 0$. If the Lagrangian is \textit{left-invariant}, meaning that $L(hg, h\dot{g}) = L(g, \dot{g})$ for all $g, h \in G$, then we may consider the \textit{reduced Lagrangian} $l: \g \to \R$ given by $l(\xi) := L(e, \xi) = L(g, \dot{g})$, where $\xi := g^{-1} \dot{g}$. Then, the critical points of the \textit{reduced action} $\mathcal{A}_{\text{red}}[\xi] = \displaystyle{\int_a^b l(\xi)dt}$ among variations of the from $\delta \xi = \dot{\eta} + [\xi, \eta]_\g$, where $\eta$ is arbitrary admissible proper variation, satisfy the Euler-Poincar\'e equations $\displaystyle{\frac{d}{dt}\left(\frac{\partial l}{\partial \xi}\right) = \ad^\ast_{\xi} \frac{\partial l}{\partial \xi}}.$ Moreover, it can be see that $g$ satisfies the Euler-Lagrange equations corresponding to $L$ if and only if $\xi := g^{-1} \dot{g}$ satisfies the Euler-Poincar\'e equations corresponding to $l$ \cite{holm1998euler}. The equation $\dot{g} = g \xi$ is known as the \textit{reconstruction equation}. The reduced variational principle is an example of a \textit{constrained} variational principle, since we are constraining the set of admissible variations. We will revisit constrained variational principles in Section \ref{Sec: Reduction_Homo} for a second-order variational problem on homogeneous spaces.

%\todo{(1) El principio variacional reducido es un ``constrained variational principle''como lo llaman en el articulo.
%(2) especifica cuales son las variaciones para el elemento $\xi$ que vienen dadas por $\eta$ tambien (i guess in your notation in sect 3 is $\sigma$ who plays the role of $\eta$ in the paper of Holm, Marsden an Ratiu.}

The energy function $\mathcal{E}$ is precisely the action corresponding to the Lagrangian $L(g, \dot{g}) = \frac12\|\dot{g}\|^2$. Since the Riemannian metric is left-invariant, it follows that the Lagrangian too is left-invariant, and so the reduced Lagrangian takes the form $l(\xi) =\frac12 \|\xi\|^2$. It is straight-forward to show that $\displaystyle{\frac{\partial l}{\partial \xi} = \xi^\flat}$, so that the Euler-Poincar\'e equations associated to $l$ are given by $\dot{\xi}^\flat = \ad_\xi^\ast \xi^\flat$, which is equivalent to \eqref{EP: geo} by Lemma \ref{lemma: covg-decomp}. Notice that the Euler-Poincar\'e equations corresponding to $l$ naturally live on the dual of the Lie algebra $\g^\ast$. It is only through the metric that we are able to convert them into equations on $\g$.
\end{remark}

%\begin{proof} Suppose that $\Gamma: (-\epsilon, \epsilon) \times [a, b] \to G$ is a one-parameter variation of $g$ with:
%\begin{align*}
%    T(s,t) &:= \partial_t \Gamma(s,t), &&\Tg(s,t) := \Gamma^{-1}(s,t) T(s,t), \\
%    S(s,t) &:= \partial_s \Gamma(s,t), &&\Sg(s,t) := \Gamma^{-1}(s,t) S(s,t). \\
%\end{align*}
%Then, \begin{align*}
%    \delta \mathcal{E} &= \int_a^b \left<\nabla_S T, T \right>\big\vert_{s=0}dt, \\
%    &= \int_a^b \left<\Tg' + \nabla_\Sg^\g \Tg, \Tg \right>\big\vert_{s=0}dt, \\
%    &= \int_a^b \left<\dot{\Sg} + \left[\Tg, \Sg \right], \Tg \right>\big\vert_{s=0}dt + \int_a^b \left<\nabla_\Sg^\g \Tg, \Tg \right>\big\vert_{s=0}dt, \\
%    &= -\int_a^b \left<\dot{\Tg}, \Sg \right>\big\vert_{s=0}dt + \int_a^b \frac{d}{dt}\left<\Sg, \Tg\right>\big\vert_{s=0}dt + \int_a^b \left<\nabla^\g_\Tg \Sg, \Tg\right>\big\vert_{s=0}dt, \\
%    &= -\int_a^b \left<\dot{\Tg} + \nabla^\g_\Tg \Tg, \Sg \right>\big\vert_{s=0}dt, \\
%    &= -\int_a^b \left<\dot{\xi} + \nabla_{\xi}^\g \xi, g^{-1} \delta g\right>dt.
%\end{align*}
%Since this must hold for all variational vector fields $\delta g$, and it is necessary that $\delta \mathcal{E} = 0$ along a critical point, equation \eqref{EP-eqs energy functional} follows immediately. \end{proof}

\subsection{Example 1: Geodesics for Rigid Body on $\SO(3)$}\label{section: example_geo_SO3}

%Lagrangians of the form $L(g, \dot{g}) = \frac{1}2\|\dot{g}\|^2$ are often called \textit{kinetic} due to their relation to kinetic energy in physics. A larger class, called \textit{mechanical} Lagrangians, are given by $L(g, \dot{g}) = \frac{1}2\|\dot{g}\|^2 + V(g)$, where $V: G \to \R$ is some smooth scalar field. If $V$ is left-invariant, the corresponding 

It is well-known that the attitude of a rigid body can be modelled on $G = \SO(3)$ equipped with the left-invariant Riemannian metric $\left<\dot{R}_1, \dot{R}_2\right> = \tr(\dot{R}_1 \mathbb{M} \dot{R}_2^T)$ for all $R \in \SO(3), \dot{R}_1, \dot{R}_2 \in T_R \SO(3)$, where $\mathbb{M}$ is a symmetric positive-definite $3\times 3$ matrix called the \textit{coefficient of inertia matrix}. On the Lie algebra $\so(3)$, the metric takes the form of the inner-product $\left<\hat{\Omega}_1, \hat{\Omega}_2\right> = \Omega_1^T \mathbb{J} \Omega_2$, where $\hat{\cdot}: \R^3 \to \so(3)$ is the \textit{hat isomorphism} defined by $(x_1, x_2, x_3)^T \mapsto \begin{bmatrix} 0 & -x_3 & x_2 \\ x_3 & 0 & -x_1 \\ -x_2 & x_1 & 0 \end{bmatrix}$, and $\mathbb{J}$ is a symmetric positive-definite $3\times 3$ matrix called the \textit{moment of inertia tensor}. 

Suppose that $G$ is equipped with the Levi-Civita connection induced by the above metric, and suppose that $R: [a, b] \to \SO(3)$ is a geodesic. Define $\hat{\Omega} := R^{-1} \dot{R} \in \so(3)$. Then, from Theorem \ref{thm: EP_geo}, we have $\dot{\hat{\Omega}} = \ad^\dagger_{\hat{\Omega}}\hat{\Omega}$, or $\dot{\hat{\Omega}}^\flat = \ad_{\hat{\Omega}}^\ast \hat{\Omega}^\flat$. First observe that $\hat{\Omega}^\flat(\hat{\eta}) := \left<\hat{\Omega}, \hat{\eta}\right> = \Omega^T \mathbb{J} \eta = (\mathbb{J}\Omega)^T \eta$ for all $\hat{\eta} \in \so(3)$. Hence, we may identify $\hat{\Omega}^\flat$ with $\mathbb{J}\Omega$ under the hat isomorphism. Moreover, it is well known that $\ad_{\hat{\Omega}}^\ast \bar{\Pi} = \Pi \times \Omega$ for all $\bar{\Pi} \in \so(3)^\ast$, where $\Pi$ is determined by the relationship $\left<\bar{\Pi}, \hat{\Omega}\right> = \Pi^T \Omega$. It then follows that \eqref{EP: geo} is equivalent to $\mathbb{J}\dot{\Omega} = \mathbb{J}\Omega \times \Omega$, which is recognized as Euler's equation for a rigid body.

%\todo{(1) Incluir como aplicacion o ejemplo o remark, la extension de las ecuaciones de euler-Poincar\'e para producto semidirecto y consecuentemente con (3) aplicacion a control optimo con symmetry breaking. (2) Conecta con el heavy top para cubrir los ejemplos clasicos de mecanica geometrica como hiciste con el rigid body.}

\section{Necessary and Sufficient Conditions for the Variational Obstacle Avoidance Problem}\label{Sec: Necessary conditions}
%\subsection{Problem formulation I}

%\todo{connect the problem with control systems. That is, you have a control system  - for instance double integrator dynamics - and you wish to minimize the norm of the acceleration and avoid obstacles. See for instance \url{https://arxiv.org/pdf/1910.09514.pdf} and \url{https://arxiv.org/abs/2003.12183}}

Consider a complete and connected Riemannian manifold $Q$. For some $X, Y \in TQ$ and $a < b \in \R$, let $\Omega_{X, Y}^{a, b}$ be defined as in the previous section. We define the function $J: \Omega \to \R$ by
\begin{equation}\label{J}
J(q)=\int\limits_0^T \Big{(}\frac12\Big{|}\Big{|}D_t\dot{q}(t)\Big{|}\Big{|}^2 + V(q(t))\Big{)}dt.
\end{equation}

%The functional is constructed as...
\textbf{(P1): Variational obstacle avoidance problem:} Find a curve $q\in \Omega$ minimizing the functional $J$, where $V:Q \to\mathbb{R}$ is a smooth and non-negative function called the \textit{artificial potential}, and ${D}_t \dot{q}$ is the covariant derivative along $q$ induced by the Levi-Civita connection on $Q$.

\vspace{.2cm}

In order to minimize the functional $J$ among the set $\Omega$, we want to find curves $q\in\Omega$ such that $J(q)\leq J(\tilde{q})$ for all
admissible curves $\tilde{q}$ in a $C^1$-neighborhood of $q$. Necessary conditions can be derived by finding $q$ such that the differential of $J$ at $q$, $dJ(q)$, vanishes identically. This is clearly equivalent to $dJ(q)W = 0$ for all $W \in T_q \Omega$, which itself can be understood through variations—as discussed in the previous section. The next result from \cite{BlCaCoCDC} characterizes necessary conditions for optimality in the variational obstacle avoidance problem.

%Imposing that $\frac{d}{dr}J(x_1,...,\Gamma_i(r),...x_n)\Big{|}_{r=0}=0$, we obtain that:

\begin{proposition}\label{th1}\cite{BlCaCoCDC} $q \in \Omega$ is a critical point for the functional $J$ if and only if it is a $\mathcal{C}^\infty$-curve on $[a,b]$ satisfying
\begin{equation}\label{eqq1}
    D^3_t \dot{q}+R\big{(}D_t\dot{q},\dot{q}\big{)}\dot{q} + \hbox{\grad} \, V(q(t)) = 0.
\end{equation}
\end{proposition}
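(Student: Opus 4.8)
The plan is to compute the first variation of $J$ directly, using a standard variational argument: take an admissible proper variation $\Gamma(s,t)$ of $q$ with variational vector field $W = \partial_s\Gamma(0,t) \in T_q\Omega$, set $V(s) = \partial_s\Gamma$ and $T(s) = \partial_t\Gamma$, and differentiate $J(\Gamma(s,\cdot))$ under the integral sign at $s=0$. The integrand has two pieces. For the potential term, $\frac{d}{ds}\big|_{s=0} V(\Gamma(s,t)) = \langle \grad V(q), W\rangle$ immediately from the definition of the gradient. For the acceleration term, I would write $\frac{d}{ds}\big|_{s=0} \frac12\|D_t T\|^2 = \langle D_s D_t T, D_t T\rangle$ evaluated at $s=0$, using metric compatibility of $\nabla$.

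The key geometric input is the symmetry of mixed covariant derivatives of the variation together with the curvature identity: since $\partial_s\partial_t\Gamma = \partial_t\partial_s\Gamma$, one has $D_s T = D_t V$, and then $D_s D_t T = D_t D_s T + R(\partial_s\Gamma, \partial_t\Gamma)T = D_t D_t V + R(V,T)T$. Evaluating at $s=0$ gives $D_s D_t T|_{s=0} = D_t^2 W + R(W, \dot q)\dot q$. Hence the first-variation integrand for the acceleration term becomes $\langle D_t^2 W + R(W,\dot q)\dot q,\, D_t\dot q\rangle$. Thus
\begin{align*}
dJ(q)W = \int_a^b \Big( \langle D_t^2 W + R(W,\dot q)\dot q, D_t\dot q\rangle + \langle \grad V(q), W\rangle \Big)\, dt.
\end{align*}

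The remaining step is integration by parts to move all covariant derivatives off $W$. Integrating $\langle D_t^2 W, D_t\dot q\rangle$ by parts twice produces boundary terms involving $W$, $D_t W$, $D_t\dot q$, $D_t^2\dot q$ at $t=a,b$, all of which vanish because $W(a)=W(b)=D_tW(a)=D_tW(b)=0$ by definition of $T_q\Omega$, leaving $\langle W, D_t^3\dot q\rangle$ in the interior (note the sign: two integrations by parts restore the sign). For the curvature term I would use the symmetry $\langle R(W,\dot q)\dot q, D_t\dot q\rangle = \langle R(D_t\dot q,\dot q)\dot q, W\rangle$, which follows from the pairwise symmetry $\mathrm{Rm}(X,Y,Z,V) = \mathrm{Rm}(Z,V,X,Y)$ of the Riemannian curvature tensor. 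Collecting terms gives
\begin{align*}
dJ(q)W = \int_a^b \big\langle D_t^3\dot q + R(D_t\dot q,\dot q)\dot q + \grad V(q),\, W\big\rangle\, dt,
\end{align*}
and since this must vanish for all $W\in T_q\Omega$, the fundamental lemma of the calculus of variations yields \eqref{eqq1}. The smoothness claim (that a critical point is automatically $C^\infty$) I would handle separately by a bootstrap/regularity argument: a priori $q$ is only $C^1$ piecewise smooth, but equation \eqref{eqq1} read as an ODE for $q$ forces higher regularity on each smooth piece, and matching the vanishing boundary data $D_tW, W$ at interior breakpoints forces the pieces to join smoothly.

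The main obstacle is getting the mixed-covariant-derivative commutation and the curvature bookkeeping exactly right — in particular tracking signs through the two integrations by parts and correctly invoking the curvature symmetry to rewrite $\langle R(W,\dot q)\dot q, D_t\dot q\rangle$ with $W$ in the last slot — since an error there changes the sign or the argument order in the $R$-term of \eqref{eqq1}. Everything else (the boundary terms, the fundamental lemma step) is routine given the structure of $T_q\Omega$ already recorded in the excerpt. Since this result is quoted from \cite{BlCaCoCDC}, I expect the paper to give only a brief sketch along these lines.
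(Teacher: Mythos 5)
Your derivation is correct and is essentially the argument this paper relies on: the proposition is stated without proof, citing \cite{BlCaCoCDC}, where exactly the standard first-variation computation you outline (commuting $D_s D_t$ through the curvature endomorphism, two integrations by parts killed by the boundary conditions in $T_q\Omega$, the self-adjointness of $X \mapsto R(X,\dot q)\dot q$, and the fundamental lemma) is how the result is obtained. Two minor imprecisions worth fixing: the identity $\left<R(W,\dot q)\dot q, D_t\dot q\right> = \left<R(D_t\dot q,\dot q)\dot q, W\right>$ needs the skew symmetries of $\mathrm{Rm}$ in its first and last pairs in addition to the pair symmetry you quote, and at interior breakpoints $W$ and $D_tW$ do not vanish—rather it is their arbitrariness there that forces the one-sided limits of $D_t\dot q$ and $D_t^2\dot q$ to agree, after which smoothness follows by bootstrapping through the equation, as you indicate.
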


\begin{remark}\label{remark_cubics}
We call the solutions to \eqref{eqq1} \textit{modified Riemannian cubic polynomials with respect to $V$}, or simply modified cubics when the potential is understood from context, following the standard nomenclature that such curves are referred to as Riemannian cubic polynomials in the event that $V \equiv 0$. Given a point obstacle $q_0 \in Q$, we may consider a potential of the form $V(q) = \frac{\tau}{1 + (d(q_0, q)/D)^{2N}}$, where $\tau, D \in \R^+$ and $N \in \N$. From \cite{goodman2022collision} it follows that the solutions to \eqref{eqq1} avoid the obstacle $q_0$ within a tolerance of $D$ (that is, they do not enter the ball $B_D(q_0) \subset Q$) for $\tau, N$ sufficiently large. Of course, this is not the only family of artificial potentials with such a property, however it should be noted that in general, such families will involve the Riemannian distance function, as it is the primary tool for making comparisons between the agent and the obstacle.
\end{remark}

\subsection{Reduction in the Variational Obstacle Avoidance Problem on Lie Groups}\label{reduction_obs_avoid_G}

We wish to obtain Euler-Poincar\'e equations corresponding to \eqref{eqq1} in the case that $Q = G$, under the additional assumptions:
\begin{quote}
\begin{description}
    \item[\textbf{G1:}] $G$ a Lie group endowed with a left-invariant Riemannian metric and corresponding Levi-Civita connection $\nabla$.
    \item[\textbf{G2:}] There exists a smooth function $V_{\ext}: G \times G \to \R$ called the \textit{extended artificial potential} which satisfies $V_{\ext}(\cdot, g_0) = V$ for some $g_0\in G$ and which is invariant under left-translation on $G\times G$. That is, $V_{\ext}(hg, h\bar{g}) = V_{\ext}(g, \bar{g})$ for all $g, \bar{g}, h \in G$.
\end{description}
\end{quote}

\noindent To do this, we first must understand the forms that $D_t^3 \dot{g}$ and $R(D_t \dot{g}, \dot{g})\dot{g}$ take when left-translated to curves in the Lie algebra. This is summarized in the following Proposition:

\begin{proposition}\label{prop: cubic_red}
Let $g: [a, b] \to G$ be a smooth curve and set $\xi := g^{-1}\dot{g}$ and $\eta = \dot{\xi} - \ad^\dagger_\xi \xi$. Then,
\begin{align}
    D^3_t \dot{g} &= g\Big{(}\ddot{\eta} + 2\nabla^\g_\xi \dot{\eta} + \nabla^\g_{\eta} \eta + \nabla^\g_{\ad^\dagger_\xi \xi} \eta + \nabla^\g_\xi \nabla^\g_\xi \eta\Big{)},\label{covTTTT} \\
    R\left(D_t \dot{g}, \dot{g}\right)\dot{g} &= g R\big{(}\eta, \xi \big{)}\xi.\label{R_g}
\end{align}
\end{proposition}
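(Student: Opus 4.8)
The plan is to iterate Lemma~\ref{lemma: cov-to-covg} three times, carefully tracking the left-translated vector field at each stage, and then to deduce the curvature formula \eqref{R_g} by a change-of-frame argument. First I would observe that, by Theorem~\ref{thm: EP_geo}'s precursor computation (or directly from Lemma~\ref{lemma: cov-to-covg} applied to $X = \dot g$, whose left-translation is $\xi$ with $\dot{(\cdot)}$-derivative $\dot\xi$), we have $D_t \dot g = g(\dot\xi + \nabla^\g_\xi \xi)$. Now Lemma~\ref{lemma: covg-decomp} gives $\nabla^\g_\xi \xi = \tfrac12([\xi,\xi]_\g - 2\ad^\dagger_\xi \xi) = -\ad^\dagger_\xi\xi$, so $D_t\dot g = g(\dot\xi - \ad^\dagger_\xi\xi) = g\eta$, which identifies $\eta$ as the left-translation of the first covariant derivative $D_t\dot g$. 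This is the key bookkeeping fact that makes the rest mechanical.

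Next I would apply Lemma~\ref{lemma: cov-to-covg} to the vector field $X = D_t\dot g$ along $g$. Its left-translation is $\eta$, so its $\dot{(\cdot)}$-derivative is $\dot\eta$, and the lemma yields $D_t^2\dot g = D_t(D_t\dot g) = g(\dot\eta + \nabla^\g_\xi \eta)$. Applying the lemma once more to $X = D_t^2 \dot g$, whose left-translation is $\dot\eta + \nabla^\g_\xi\eta$, I must compute the $\dot{(\cdot)}$-derivative of that expression; since $\dot{(\cdot)}$ is just the ordinary $t$-derivative after left-translating to $\g$ and $\nabla^\g$ is $\R$-bilinear, this derivative is $\ddot\eta + \nabla^\g_{\dot\xi}\eta + \nabla^\g_\xi \dot\eta$. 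Then Lemma~\ref{lemma: cov-to-covg} gives
\begin{align*}
D_t^3\dot g = g\Big(\ddot\eta + \nabla^\g_{\dot\xi}\eta + \nabla^\g_\xi\dot\eta + \nabla^\g_\xi(\dot\eta + \nabla^\g_\xi\eta)\Big) = g\Big(\ddot\eta + 2\nabla^\g_\xi\dot\eta + \nabla^\g_{\dot\xi}\eta + \nabla^\g_\xi\nabla^\g_\xi\eta\Big).
\end{align*}
Finally I would substitute $\dot\xi = \eta + \ad^\dagger_\xi\xi$ (from the definition of $\eta$) into the term $\nabla^\g_{\dot\xi}\eta$, using $\R$-bilinearity to split it as $\nabla^\g_\eta\eta + \nabla^\g_{\ad^\dagger_\xi\xi}\eta$; this produces exactly \eqref{covTTTT}.

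For the curvature identity \eqref{R_g}, the idea is that the curvature tensor is tensorial, so evaluating $R$ on vector fields along $g$ versus on left-invariant vector fields and then left-translating must agree. Concretely, extend $\eta, \xi \in \g$ (at a fixed time $t$) to left-invariant vector fields $\phi(\eta), \phi(\xi)$ on $G$; then $D_t\dot g(t) = L_{g(t)\ast}\eta = \phi(\eta)(g(t))$ and $\dot g(t) = \phi(\xi)(g(t))$. Since $R$ is $C^\infty(G)$-linear in all three arguments, $R(D_t\dot g,\dot g)\dot g$ at $g(t)$ equals $\big(R(\phi(\eta),\phi(\xi))\phi(\xi)\big)(g(t))$, and because left-translation is an isometry of the left-invariant metric its Levi-Civita connection is left-invariant, whence $R(\phi(\eta),\phi(\xi))\phi(\xi)$ is a left-invariant vector field equal to $L_{g(t)\ast}\big(R(\phi(\eta),\phi(\xi))\phi(\xi)(e)\big) = g(t)\,R(\eta,\xi)\xi$ (reading $R$ on the right as the induced map on $\g$ via $\phi$). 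That gives \eqref{R_g}.

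The main obstacle is simply the disciplined bookkeeping: at each of the three iterations one must correctly identify what the ``$\dot{(\cdot)}$'' operation does to the previously computed left-translated field, remembering that it is the ordinary $t$-derivative in the trivialized picture and that it commutes with $\nabla^\g$ only up to the product rule with the $\R$-bilinear structure. There is no deep difficulty, but an off-by-one slip in which argument of $\nabla^\g_\xi\nabla^\g_\xi\eta$ carries the dot, or forgetting to re-expand $\dot\xi$ in terms of $\eta$, would produce the wrong coefficients; so I would double-check the expansion of $D_t^2\dot g$ before proceeding to $D_t^3\dot g$.
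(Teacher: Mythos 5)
Your proposal is correct and follows essentially the same route as the paper: iterate Lemma~\ref{lemma: cov-to-covg} to get $D_t\dot g = g\eta$, $D_t^2\dot g = g(\dot\eta + \nabla^\g_\xi\eta)$, and then $D_t^3\dot g$, re-expanding $\nabla^\g_{\dot\xi}\eta$ via $\dot\xi = \eta + \ad^\dagger_\xi\xi$, and deduce \eqref{R_g} from tensoriality of $R$ together with left-invariance of the connection. If anything, you supply a step the paper leaves implicit, namely the verification $\nabla^\g_\xi\xi = -\ad^\dagger_\xi\xi$ via Lemma~\ref{lemma: covg-decomp} identifying $\eta$ as the left-translation of $D_t\dot g$.
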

\begin{proof}
Observe that, from Lemma \ref{lemma: cov-to-covg}, we have $D_t^2 \dot{g} = D_t g \eta = g \left( \dot{\eta} + \nabla_{\xi}^\g \eta\right)$, so that \eqref{covTTTT} is calculated as:

\begin{align*}
    D_t^3 \dot{g} &= D_t g \left( \dot{\eta} + \nabla_{\xi}^\g \eta\right) \\
    &= \ddot{\eta} + 2\nabla^\g_\xi \dot{\eta} + \nabla^\g_{\dot{\xi}} \eta + \nabla^\g_\xi \nabla^\g_\xi \eta \\
    &= \ddot{\eta} + 2\nabla^\g_\xi \dot{\eta} + \nabla^\g_{\eta} \eta + \nabla^\g_{\ad^\dagger_\xi \xi} \eta + \nabla^\g_\xi \nabla^\g_\xi \eta
\end{align*}
For \eqref{R_g}, we see from Lemma \ref{lemma: cov-to-covg} that $D_t \dot{g} = g \eta$, and hence:
\begin{align*}
    R\left(D_t \dot{g}, \dot{g}\right)\dot{g} &= R\big{(}g\eta, g\xi \big{)}g\xi \\
    &= g R\big{(}\eta, \xi \big{)}\xi
\end{align*}
since $R$ is invariant under left-translation by $g(t)$ for any fixed $t \in [a, b]$ and since $R$ is a tensor field (and hence only depends on its arguments at the point at which we are evaluating it).
\end{proof}
%\nabla_S \nabla_T T &= \Gamma\left( (\dot{\Tg})' + \nabla^\g_{\Sg} \dot{\Tg} + \nabla^\g_{\Tg'} \Tg + \nabla^\g_{\Tg} \Tg' + \nabla^\g_{\Sg} \nabla_{\Tg}^\g \Tg \right) \\
    %\nabla_T \nabla_T T &= \Gamma\left( \ddot{\Tg} + 2 \nabla^\g_{\Tg} \dot{\Tg} + \nabla^\g_{\dot{\Tg}} \Tg + \nabla^\g_{\Tg} \nabla_{\Tg}^\g \Tg \right) \\
    %\nabla_T \nabla_S T &= \Gamma\left( (\dot{\Tg})' + \nabla^\g_{\Tg} \Tg' + \nabla^\g_{\dot{\Sg}} \Tg + \nabla^\g_{\Sg} \dot{\Tg} + \nabla^\g_{\Tg} \nabla_{\Sg}^\g \Tg \right) \\

The quantities calculated in Proposition \ref{prop: cubic_red} may be substituted directly into equation \eqref{eqq1}. If $g$ were a cubic polynomial (i.e., in the case that $V \equiv 0$), we would immediately obtain reduced equations on the Lie algebra $\g$. However, in the case that the artificial potential is non-trivial, this is only possible if $V$ is also left-invariant—which is not generally true. However, it is often the case that $V$ is left-invariant on some stabilizer subgroup of $G$. Indeed, assumption \textbf{G2} is equivalent to the assumption that $V$ is left-invariant on the \textit{stabilizer subgroup} of $g_0$, $\text{Stab}(g_0)$ (also called the \textit{isotropy subgroup} of $g_0$). For the purposes of obstacle avoidance, $g_0$ takes the form of a point-obstacle that we wish to avoid. As for classical Euler-Poincar\'e reduction, we will be able to use these assumption to reduce \eqref{eqq1} to a set of equations on $\g$, together with some reconstruction equations, by working directly with the variational principle. This is seen in the following proposition.

%\todo{Define $\text{Stab}(g_0)$ and $\grad_1$}

\begin{proposition}\label{prop: reduction_left_inv}
Consider the variational obstacle avoidance problem \textbf{P1} with $Q = G$ and the additional assumptions \textbf{G1, G2}. Then $g \in \Omega$ solves \eqref{eqq1} if and only if $\xi := g^{-1} \dot{g}$ and $h := g_0^{-1}g$ solve:
\begin{align}
    \dot{\xi} &= \eta + \ad^\dagger_{\xi} \xi, \label{ad_dag}\\
   0&= \ddot{\eta} + 2\nabla^\g_\xi \dot{\eta} + \nabla^\g_{\eta} \eta + \nabla^\g_{\ad^\dagger_\xi \xi} \eta + \nabla^\g_\xi \nabla^\g_\xi \eta + R\big{(}\eta, \xi \big{)}\xi + L_{h^{-1 \ast}} \grad_1 V_{\ext}(h, e), \label{eqq2} \\
    \dot{h} &= L_{h^\ast} \xi. \label{eqqalpha}
\end{align}
\end{proposition}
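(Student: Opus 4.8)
The plan is to avoid any fresh variational computation and instead substitute the reduced expressions of Proposition~\ref{prop: cubic_red} directly into the necessary conditions \eqref{eqq1} furnished by Proposition~\ref{th1}, after separately rewriting the potential term $\grad V(g)$ in terms of the extended potential $V_{\ext}$. Proposition~\ref{prop: cubic_red} already expresses $D^3_t\dot g$ and $R(D_t\dot g,\dot g)\dot g$ as $L_{g\ast}$ applied to explicit elements of $\g$ built from $\xi := g^{-1}\dot g$ and $\eta := \dot\xi - \ad^\dagger_\xi\xi$ (and the rearrangement $\nabla^\g_{\dot\xi}\eta = \nabla^\g_\eta\eta + \nabla^\g_{\ad^\dagger_\xi\xi}\eta$ used there follows from $\R$-bilinearity of $\nabla^\g$ together with \eqref{ad_dag}), so the only missing ingredient is a formula for $\grad V(g)$.

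For that, I would first use \textbf{G2}: left-invariance of $V_{\ext}$ on $G\times G$ gives $V(g) = V_{\ext}(g, g_0) = V_{\ext}(g_0^{-1}g, e) = U(h)$, where $h := g_0^{-1}g$ and $U := V_{\ext}(\cdot, e)\in C^{\infty}(G)$; equivalently $V = U\circ L_{g_0^{-1}}$. Then I would invoke \textbf{G1}: since the metric is left-invariant, $L_{g_0^{-1}}$ is an isometry, and for an isometry $\phi$ one has $\grad(f\circ\phi)(p) = (d\phi_p)^{-1}\big(\grad f(\phi(p))\big)$. Applied with $\phi = L_{g_0^{-1}}$ and $p = g$, this gives $\grad V(g) = L_{g_0\ast}\big(\grad U(h)\big) = L_{g_0\ast}\big(\grad_1 V_{\ext}(h,e)\big)$, the last equality being the definition of the partial gradient. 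Finally, from $L_{g_0} = L_g\circ L_{h^{-1}}$ (since $gh^{-1} = g_0$) one gets $L_{g_0\ast} = L_{g\ast}\circ L_{h^{-1}\ast}$, and hence $\grad V(g) = L_{g\ast}\big(L_{h^{-1}\ast}\grad_1 V_{\ext}(h,e)\big)$.

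With this formula in hand, the ``only if'' direction follows by setting $\xi := g^{-1}\dot g$ and $h := g_0^{-1}g$ --- so that \eqref{eqqalpha} holds because $\dot h = g_0^{-1}\dot g = L_{h\ast}\xi$, and \eqref{ad_dag} holds by taking it as the definition of $\eta$ --- and then substituting \eqref{covTTTT}, \eqref{R_g} and the gradient formula into \eqref{eqq1} and applying the linear isomorphism $L_{g^{-1}\ast}$, which yields exactly \eqref{eqq2}. For the ``if'' direction I would set $g := g_0 h$, verify $g^{-1}\dot g = h^{-1}\dot h = \xi$ using \eqref{eqqalpha}, and reverse the same chain of substitutions to recover \eqref{eqq1}; smoothness of $g$ is inherited from Proposition~\ref{th1}.

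The only genuinely non-mechanical step will be the gradient identity: one must notice that left-invariance of $V_{\ext}$ collapses $V$ to a single left-translate of the fixed function $U = V_{\ext}(\cdot,e)$ on $G$, and that left-invariance of the metric (\textbf{G1}) is precisely what makes $L_{g_0^{-1}}$ an isometry so that $\grad$ transforms covariantly; one must also be careful that $\grad_1 V_{\ext}(h,e)$ denotes the gradient, in the left-invariant metric, of $V_{\ext}(\cdot,e)$ evaluated at $h$, and that all identifications $T_g G\cong\g$ are made via $L_{g^{-1}\ast}$. Everything else is bookkeeping. I note that this route deliberately sidesteps re-deriving a constrained variational principle with admissible variations of the form $\delta\xi = \dot\eta + [\xi,\eta]_\g$; a from-scratch variational derivation would have to confront that constraint, which is where the real work would lie.
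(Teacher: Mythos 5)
Your proof is correct, but it follows a genuinely different route from the paper's. The paper argues variationally: it writes the first variation $\delta J(g)$, keeps the potential term as $\delta\int_a^b V_{\ext}(h,e)\,dt$, computes $\delta h = L_{h\ast}\sigma$ with $\sigma = g^{-1}\delta g$, uses left-invariance of the metric to move $L_{h\ast}$ across the inner product (yielding $\left<L_{h^{-1\ast}}\grad_1 V_{\ext}(h,e),\sigma\right>$), substitutes Proposition \ref{prop: cubic_red} for the cubic terms, and concludes with the fundamental lemma of the calculus of variations. You instead bypass the variation entirely and reduce the already-derived necessary condition \eqref{eqq1} pointwise: your key step is the identity $\grad V(g) = L_{g\ast}\bigl(L_{h^{-1}\ast}\grad_1 V_{\ext}(h,e)\bigr)$, obtained from $V = V_{\ext}(\cdot,e)\circ L_{g_0^{-1}}$ (assumption \textbf{G2}) together with the equivariance of the gradient under the isometry $L_{g_0^{-1}}$ (assumption \textbf{G1}), after which applying $L_{g^{-1}\ast}$ to \eqref{eqq1} gives \eqref{eqq2}, and \eqref{ad_dag}, \eqref{eqqalpha} are definitional bookkeeping. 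The two arguments rest on the same use of left-invariance — your isometry computation is exactly the infinitesimal step the paper performs inside the integral — but yours is more elementary and avoids any discussion of admissible or constrained variations, while the paper's variational route is the one that generalizes when reduced necessary conditions are not already available in the closed form \eqref{eqq1} (as in the homogeneous-space setting of Section \ref{Sec: Reduction_Homo}, where the variational principle itself must be manipulated). Your statement of the "if'' direction via $g := g_0 h$ is a harmless restatement, since in the proposition $\xi$ and $h$ are defined from $g$, so both directions amount to reversing the same chain of substitutions.
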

\begin{proof}
From Proposition \ref{th1} and assumption \textbf{G2}, it is clear that
\begin{align}
    \delta J(g) &= \int_a^b \left< D_t^3 \dot{g} + R\big{(}D_t \dot{g}, \dot{g}\big{)}\dot{g}, \delta g \right>dt + \delta\int_a^b V_{\ext}(h, e)dt\label{variation}
\end{align}
Now let $\sigma(t) := g(t)^{-1}\delta g(t)$. Then, $\delta h = g_0^{-1} \delta g = g_0^{-1} g(t) \sigma(t) = h(t) \sigma(t) = L_{h(t)^\ast} \sigma(t)$ and:
\begin{align*}
    \delta\int_a^b V_{\ext}(h, e)dt &= \int_a^b \frac{\partial}{\partial s}\Big{\vert}_{s=0} V_{\ext}(h, e)dt \\
    &= \int_a^b \left<\frac{\partial V_{\ext}}{\partial h}, L_{h(t)\ast}\sigma\right>_{\g^\ast}dt \\
    &= \int_a^b \left<\grad_1 V(h, e), L_{h(t)\ast}\sigma\right> dt \\
    &= \int_a^b \left<L_{h(t)^{-1 \ast}}\grad_1 V(h. e), \ \sigma\right> dt
\end{align*}
Plugging this into \eqref{variation} and using Proposition \ref{prop: cubic_red}, we obtain
\begin{equation*}
    \int_a^b \left<\ddot{\eta} + 2\nabla^\g_\xi \dot{\eta} + \nabla^\g_{\eta} \eta + \nabla^\g_{\ad^\dagger_\xi \xi} \eta + \nabla^\g_\xi \nabla^\g_\xi \eta + R\big{(}\eta, \xi \big{)}\xi + L_{h(t)^{-1 \ast}}\grad_1 V(h, e), \ \sigma \right>dt = 0.
\end{equation*}
The conclusion follows upon applying the fundamental lemma of the calculus of variations, together with the observation that $\dot{h} = g_0^{-1} \dot{g}= g_0^{-1}gg^{-1}\dot{g}= h \xi$.
\end{proof}

\begin{remark}
As in the first order case discussed in Remark \ref{remark: lagrangian}, problem \textbf{P1} is a special case of a higher order theory of Lagrangians. This was studied on Lie groups in \cite{gay2012invariant}, where the corresponding higher order Euler-Poincar\'e equations were obtained. However, this formalism requires the use of higher order tangent bundles—whereas the formalism adopted here allows to work only on the tangent bundle $TG$.  Moreover, through Lemma \ref{lemma: cov-to-covg}, we are able to directly reduce the necessary conditions corresponding to some variational principle given the appropriate symmetries rather than reduce the variational principle itself—which is not possible in the Lagrangian formalism. This can be seen in the proof of Proposition \ref{prop: reduction_left_inv} with the reduction of the component $D_t^3 \dot{g} + R(D_t \dot{g}, \dot{g})\dot{g}$. Observe also that we have $L_{h(t)^{-1 \ast}} \grad_1 V(h, e) = J_L (\grad_1 V_{\ext}(h, e))^\sharp$, where $J_L: T^\ast G \to \g^\ast$ is the \textit{momentum map} corresponding to the left-action of $G$ on $T^\ast G$ (that is, the cotangent lift of the left-translation map). 
\end{remark}

\subsection{Reduction on Lie Groups with Bi-invariant Metrics}\label{sec: red_biinv}

We call a Riemannian metric $\left< \cdot, \cdot \right>{\Bi}$ on $G$ \textit{bi-invariant} if it is invariant under both left- and right-translations, or equivalently, is $\Ad$-invariant, that is, invariant under the adjoint action of $G$ on $\mathfrak{g}$. Unlike left-invariant and right-invariant metrics, not every Lie group admits a bi-invariant metric. However, it is known that any compact and connected Lie group does (although this is not a necessary condition). Hence, we view equipping $G$ with such a bi-invariant metric as a strengthening of assumption \textbf{G1}.
\begin{quote}
\noindent$\textbf{G1}^\ast$\textbf{:} $G$ is a Lie group equipped with a bi-invariant Riemannian metric and corresponding Levi-Civita connection $\nabla$.
\end{quote}

\noindent Under assumption $G1^\ast$, equation \eqref{eqq2} simplifies considerably. This is due to the following lemma from \cite{Milnor} (applied in the context of the Riemannian $\g$-connection):
\begin{lemma}\label{bi-invariant_G}
If $G$ is a Lie group equipped with a bi-invariant Riemannian metric and Levi-Civita connection $\nabla$, and $\xi, \eta, \sigma \in \g$, then:
\begin{align}
    \nabla_{\xi}^\g \eta &= \frac12 \left[\xi, \eta\right]_{\g},\label{cov_bi} \\
    R\big{(}\xi, \eta{)}\sigma &= -\frac14 \big{[}\big{[}\xi, \eta \big{]}_{\g}, \sigma \big{]}_\g.\label{R_bi}
\end{align}
\end{lemma}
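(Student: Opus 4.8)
The plan is to reduce everything to facts about the Levi-Civita connection $\nabla$ of a bi-invariant metric on $G$ and then transport them to the Riemannian $\mathfrak{g}$-connection via the definition \eqref{g-connection} and Lemma \ref{lemma: covg_prop}. First I would establish \eqref{cov_bi}. The key algebraic input is that for a bi-invariant metric the $\Ad$-invariance differentiates to $\ad$-skew-symmetry, i.e. $\langle [\sigma,\xi], \eta\rangle + \langle \xi, [\sigma,\eta]\rangle = 0$ for all $\xi,\eta,\sigma\in\mathfrak{g}$; equivalently $\ad^\dagger_\xi \eta = -[\xi,\eta]_\mathfrak{g}$ in the notation preceding Lemma \ref{lemma: covg-decomp}. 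Substituting this into the decomposition $\nabla^\mathfrak{g}_\xi\eta = \tfrac12\big([\xi,\eta]_\mathfrak{g} - \ad^\dagger_\xi\eta - \ad^\dagger_\eta\xi\big)$ from Lemma \ref{lemma: covg-decomp} gives $\nabla^\mathfrak{g}_\xi\eta = \tfrac12\big([\xi,\eta]_\mathfrak{g} + [\xi,\eta]_\mathfrak{g} + [\eta,\xi]_\mathfrak{g}\big) = \tfrac12[\xi,\eta]_\mathfrak{g}$, which is \eqref{cov_bi}. (Alternatively one can verify directly that $\tfrac12[\cdot,\cdot]_\mathfrak{g}$ is torsion-free and metric compatible in the sense of Lemma \ref{lemma: covg_prop}, hence equals $\nabla^\mathfrak{g}$ by the uniqueness built into the construction, but the computation via Lemma \ref{lemma: covg-decomp} is shortest.)

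Next I would derive \eqref{R_bi}. Since $\nabla^\mathfrak{g}$ is not literally a connection, I would work at the level of left-invariant vector fields: for $\xi,\eta,\sigma\in\mathfrak{g}$ one has $R(\phi(\xi),\phi(\eta))\phi(\sigma)(e) = \big(\nabla_{\phi(\xi)}\nabla_{\phi(\eta)}\phi(\sigma) - \nabla_{\phi(\eta)}\nabla_{\phi(\xi)}\phi(\sigma) - \nabla_{[\phi(\xi),\phi(\eta)]}\phi(\sigma)\big)(e)$, and using $\phi([\xi,\eta]_\mathfrak{g}) = [\phi(\xi),\phi(\eta)]$ together with the definition \eqref{g-connection} this equals $\nabla^\mathfrak{g}_\xi\nabla^\mathfrak{g}_\eta\sigma - \nabla^\mathfrak{g}_\eta\nabla^\mathfrak{g}_\xi\sigma - \nabla^\mathfrak{g}_{[\xi,\eta]_\mathfrak{g}}\sigma$, interpreting $\nabla^\mathfrak{g}$ as an $\R$-bilinear operator on $\mathfrak{g}\times\mathfrak{g}$. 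Now substitute \eqref{cov_bi} into each term: the first is $\tfrac14[\xi,[\eta,\sigma]_\mathfrak{g}]_\mathfrak{g}$, the second is $\tfrac14[\eta,[\xi,\sigma]_\mathfrak{g}]_\mathfrak{g}$, the third is $\tfrac12[[\xi,\eta]_\mathfrak{g},\sigma]_\mathfrak{g}$. Combining via the Jacobi identity $[\xi,[\eta,\sigma]]_\mathfrak{g} - [\eta,[\xi,\sigma]]_\mathfrak{g} = [[\xi,\eta]_\mathfrak{g},\sigma]_\mathfrak{g}$ yields $R(\xi,\eta)\sigma = \tfrac14[[\xi,\eta]_\mathfrak{g},\sigma]_\mathfrak{g} - \tfrac12[[\xi,\eta]_\mathfrak{g},\sigma]_\mathfrak{g} = -\tfrac14[[\xi,\eta]_\mathfrak{g},\sigma]_\mathfrak{g}$, which is \eqref{R_bi}.

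The only real subtlety — and the step I would be most careful about — is justifying the transfer of the curvature formula to the $\mathfrak{g}$-level: namely that evaluating the curvature endomorphism of $\nabla$ on left-invariant fields and then restricting to $e$ really does produce the iterated-$\nabla^\mathfrak{g}$ expression above, including the correct handling of the bracket term $\nabla_{[\phi(\xi),\phi(\eta)]}\phi(\sigma)$. This is where left-invariance of $\phi(\xi),\phi(\eta),\phi(\sigma)$ and the identity $[\phi(\xi),\phi(\eta)] = \phi([\xi,\eta]_\mathfrak{g})$ are essential, exactly as in the proof of Lemma \ref{lemma: cov-to-covg}. Once that bookkeeping is in place, everything else is the routine Jacobi-identity manipulation indicated above, and the claim follows; I would simply cite \cite{Milnor} for the underlying statements about bi-invariant metrics and present the $\mathfrak{g}$-connection translation as the content of the proof.
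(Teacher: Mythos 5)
Your argument is correct, but note that the paper does not actually prove this statement: it imports it verbatim from \cite{Milnor} ("the following lemma from \cite{Milnor}, applied in the context of the Riemannian $\g$-connection"), so your proposal supplies a self-contained derivation where the paper offers only a citation. Your route is consistent with the paper's own toolkit: bi-invariance differentiates to skew-adjointness of $\ad$, so $\ad^\dagger_\xi\eta = -[\xi,\eta]_\g$, and substituting into the decomposition of Lemma \ref{lemma: covg-decomp} gives \eqref{cov_bi} exactly as you say (the paper itself uses $\ad^\dagger = -\ad$ later, in the proof of Proposition \ref{prop: reduction_bi_inv_H}); the curvature computation via the Jacobi identity then yields \eqref{R_bi} with the paper's sign convention $R(X,Y)Z = \nabla_X\nabla_Y Z - \nabla_Y\nabla_X Z - \nabla_{[X,Y]}Z$. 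The one step you flag as delicate is indeed the crux, and it deserves one explicit sentence rather than a gesture: to turn $\nabla_{\phi(\xi)}\nabla_{\phi(\eta)}\phi(\sigma)(e)$ into $\nabla^\g_\xi\nabla^\g_\eta\sigma$ you need that $\nabla_{\phi(\eta)}\phi(\sigma)$ is itself left-invariant, i.e.\ $\nabla_{\phi(\eta)}\phi(\sigma) = \phi\bigl(\nabla^\g_\eta\sigma\bigr)$, which holds because the Levi-Civita connection of a left-invariant metric commutes with left translations (the Koszul formula has constant coefficients on left-invariant fields); the definition \eqref{g-connection} alone only records the value at $e$. With that line added, your proof is complete and, arguably, a useful addition, since it makes the lemma follow from material already developed in the paper (Lemmas \ref{lemma: covg_prop} and \ref{lemma: covg-decomp}) rather than from an external reference.
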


\noindent We immediately obtain the following corollary to Proposition \ref{prop: reduction_left_inv}.
\begin{corollary}\label{cor: reduction_bi_inv}
Consider the variational obstacle avoidance problem \textbf{P1} with $Q = G$ and the additional assumptions $\textbf{G1}^\ast$, \textbf{G2}. Then $g \in \Omega$ solves \eqref{eqq1} if and only if $\xi := g^{-1} \dot{g}$ and $h := g_0^{-1}g$ solve:
\begin{align}
    \dddot{\xi} &+ \big{[}\xi, \ddot{\xi}\big{]}_\g + L_{h^{-1 \ast}} \grad_1 V_{\ext}(h, e)= 0, \label{eqq3} \\
    \dot{h} &= L_{h^\ast}\xi. \label{eqqalpha_bi}
\end{align}
\end{corollary}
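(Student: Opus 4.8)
The plan is to read off equations \eqref{ad_dag}--\eqref{eqqalpha} from Proposition~\ref{prop: reduction_left_inv}, which apply verbatim since assumption $\textbf{G1}^\ast$ strengthens $\textbf{G1}$, and then to collapse them using the bi-invariant identities \eqref{cov_bi}--\eqref{R_bi} of Lemma~\ref{bi-invariant_G}. No new variational argument is needed; the content is entirely the simplification of the three ODEs.

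First I would record the effect of bi-invariance on $\ad^\dagger$. Comparing Lemma~\ref{lemma: covg-decomp} evaluated at $\eta = \xi$ with \eqref{cov_bi} gives $-\ad^\dagger_\xi \xi = \nabla^\g_\xi \xi - \frac12[\xi,\xi]_\g = 0$, hence $\ad^\dagger_\xi \xi = 0$. Substituting this into \eqref{ad_dag} yields $\eta = \dot\xi$, and it also kills the term $\nabla^\g_{\ad^\dagger_\xi \xi}\eta$ in \eqref{eqq2}. The reconstruction equation \eqref{eqqalpha} is already in the form \eqref{eqqalpha_bi}, so the whole corollary reduces to rewriting \eqref{eqq2}.

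Next I would substitute $\eta = \dot\xi$ together with \eqref{cov_bi} into \eqref{eqq2} term by term: $\ddot\eta = \dddot\xi$, $2\nabla^\g_\xi \dot\eta = [\xi,\ddot\xi]_\g$, $\nabla^\g_\eta\eta = \frac12[\dot\xi,\dot\xi]_\g = 0$, and $\nabla^\g_\xi\nabla^\g_\xi\eta = \frac14[\xi,[\xi,\dot\xi]_\g]_\g$; by \eqref{R_bi} the curvature term becomes $R(\eta,\xi)\xi = -\frac14[[\dot\xi,\xi]_\g,\xi]_\g$. The one computation that requires checking — and the only place anything nontrivial happens — is the cancellation of the two iterated-bracket terms: using only antisymmetry of the bracket one gets $[[\dot\xi,\xi]_\g,\xi]_\g = [\xi,[\xi,\dot\xi]_\g]_\g$, so that $\nabla^\g_\xi\nabla^\g_\xi\eta + R(\eta,\xi)\xi = 0$. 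What survives of \eqref{eqq2} is then exactly $\dddot\xi + [\xi,\ddot\xi]_\g + L_{h^{-1 \ast}}\grad_1 V_{\ext}(h,e) = 0$, which is \eqref{eqq3}, completing the proof.

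I do not anticipate a genuine obstacle: the argument is pure bookkeeping with the structure constants, and the only delicate points are getting the sign right in $\ad^\dagger_\xi \xi = 0$ and in the double-bracket cancellation, the latter being the familiar fact that for a bi-invariant metric the ``cubic polynomial'' second-order term $\nabla^\g_\xi\nabla^\g_\xi\eta$ and the sectional-curvature term are governed by the same iterated bracket and hence cancel.
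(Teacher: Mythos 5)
Your proof is correct and takes essentially the same route as the paper: both specialize Proposition \ref{prop: reduction_left_inv} and simplify \eqref{eqq2} term by term using Lemma \ref{bi-invariant_G}, the only cosmetic difference being that you check the cancellation $\nabla^\g_\xi\nabla^\g_\xi\eta + R(\eta,\xi)\xi = 0$ by expanding both iterated brackets via antisymmetry, whereas the paper packages it as the identity $R(\eta,\xi)\xi = -\nabla^\g_\xi\nabla^\g_\xi\eta$. Your explicit step $\ad^\dagger_\xi\xi = 0$ (giving $\eta = \dot\xi$ and killing $\nabla^\g_{\ad^\dagger_\xi\xi}\eta$) is the same fact as the paper's observation $\nabla^\g_\xi\xi = 0$, just stated through Lemma \ref{lemma: covg-decomp}.
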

\begin{proof}
Observe that, as a consequence of equation \eqref{cov_bi}, the following two relations hold for all $\xi, \eta \in \g$:
\begin{align*}
    \nabla^\g_{\xi} \eta &= -\nabla^\g_{\eta} \xi, \\
    \nabla^\g_{\xi} \xi &= 0.
\end{align*}
Moreover, the curvature endomorphism relates to the Riemannian $g$-connection as follows:
\begin{align*}
    R(\xi, \eta)\sigma &= -\frac14 \big{[}\big{[}\xi, \eta \big{]}_{\g}, \sigma \big{]}_\g, \\
    &= -\frac14 \big{[}\sigma, \big{[}\eta, \xi \big{]}_{\g} \big{]}_\g, \\
    &= -\frac12 \big{[}\sigma, \nabla^\g_{\eta} \xi \big{]}_\g \\
    &= -\nabla^\g_{\sigma} \nabla^\g_{\eta} \xi.
\end{align*}
Applying these identities to \eqref{eqq2} term-by-term yields \eqref{eqq3}.
\end{proof}

Following Remark \ref{remark_cubics}, consider a potential of the form $V(g) = \frac{\tau}{1 + (d(g, g_0)/D)^{2N}}$. We may consider an extended potential $V_{\ext}: G\times G \to \R$ given by $V_{\ext}(g_1, g_2) = \frac{\tau}{1 + (d(g_1, g_2)/D)^{2N}}$. In order for assumption \textbf{G2} to apply, we need that the Riemannian distance $d$ is invariant under left-translation, which we show in the following lemma.

\begin{lemma}\label{lemma: distance-left-inv}
$d(gq, gp) = d(q, p)$ for all $g,q, p \in G$.
\end{lemma}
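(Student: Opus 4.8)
The plan is to exploit the fact, already recorded in Section~\ref{sec: background_Lie}, that left-translation $L_g \colon G \to G$ is an isometry of the left-invariant metric $\left<\cdot,\cdot\right>$ for every $g \in G$. Since the Riemannian distance is defined as an infimum of lengths of regular curves, and isometries preserve the lengths of curves, the equality $d(gq,gp)=d(q,p)$ should follow essentially immediately.

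In more detail: first I would recall that for any regular curve $\gamma \colon [a,b] \to G$ with $\gamma(a)=q$, $\gamma(b)=p$, the composition $L_g \circ \gamma$ is again a regular curve, now joining $gq$ to $gp$, with velocity $\tfrac{d}{dt}(L_g\circ\gamma)(t) = L_{g\ast}\dot\gamma(t)$ by the chain rule. Because $L_g$ is an isometry, $\|L_{g\ast}\dot\gamma(t)\| = \|\dot\gamma(t)\|$ for all $t$, and therefore $L(L_g\circ\gamma) = \int_a^b \|L_{g\ast}\dot\gamma(t)\|\,dt = \int_a^b \|\dot\gamma(t)\|\,dt = L(\gamma)$. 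Taking the infimum over all regular curves from $q$ to $p$, and noting that $\gamma \mapsto L_g\circ\gamma$ maps this family into the family of regular curves from $gq$ to $gp$, gives $d(gq,gp) \le d(q,p)$.

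For the reverse inequality I would simply apply the same argument with $g$ replaced by $g^{-1}$: since $L_{g^{-1}}$ is also an isometry and $L_{g^{-1}}\circ(L_g\circ\gamma) = \gamma$, the map $\gamma \mapsto L_g\circ\gamma$ is in fact a length-preserving bijection between the two curve families, so the infima coincide and $d(gq,gp)=d(q,p)$. There is no genuine obstacle here; the only point requiring a word of care is the observation that pre- and post-composition by $L_g$ and $L_{g^{-1}}$ set up a bijection between the relevant classes of admissible curves, so that taking infima is legitimate on both sides.
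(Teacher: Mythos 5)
Your proof is correct, and it takes a genuinely different (and slightly more economical) route than the paper. The paper first invokes completeness of $G$ (via Hopf--Rinow) to produce an actual length-minimizing geodesic $\gamma$ from $p$ to $q$ with $d(p,q)=L(\gamma)$, translates it by $g$ to get $d(p,q)\ge d(gp,gq)$, and then repeats the argument with a minimizing geodesic from $gp$ to $gq$ translated by $g^{-1}$ for the reverse inequality. You never need a minimizer: you observe that $\gamma\mapsto L_g\circ\gamma$ is a length-preserving bijection between the class of regular curves joining $q,p$ and the class joining $gq,gp$ (with inverse given by $L_{g^{-1}}$), so the two infima defining the Riemannian distance coincide. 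What your approach buys is generality and robustness—it works for any Riemannian metric invariant under a group of isometries, with no completeness hypothesis and no appeal to existence of minimizing geodesics; what the paper's approach buys is brevity in context, since completeness of Lie groups with left-invariant metrics was already asserted in Section~\ref{sec: background_Lie} and the minimizing-geodesic phrasing matches how the distance is later computed via $\exp^{-1}$. Both arguments hinge on the same key fact that $L_g$ is an isometry, so the difference is purely in how the infimum is handled.
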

\begin{proof}
Since $G$ is complete as a Riemannian manifold, there exists a geodesic $\gamma: [0, 1] \to G$ which minimizes the length functional $\displaystyle{L(c) = \int_0^1 \|\dot{c}(t)\| dt}$ among all smooth curves $c:[0, 1] \to G$ satisfying $\ c(0) = p, \ c(1) = q$. Moreover, we have $d(p, q) = L(\gamma)$. By left-invariance of the metric, we then have that $d(p, q) = L(\gamma) = L(g\gamma) \ge d(gp, gq)$, since in particular $g \gamma$ is a smooth curve such that $g\gamma(0) = gp, \ g\gamma(1) = gq$. On the other hand, there exists some geodesic $\gamma^\ast$ such that $L(\gamma^\ast) = d(gp, gq)$, and so $d(gp, gq) = L(\gamma^\ast) = L(g^{-1}\gamma^\ast) \ge d(p, q)$. It follows that $d(p, q) = d(gp, gq)$.
\end{proof}

Hence $V_{\ext}$ is left-invariant under left translation on $G \times G$. Evaluating $V_{\ext}(h, e)$ in practice—let alone its gradient vector field—is highly non-trivial. This is due to the fact that a separate calculation for $d(h(t), e)$ is needed for each $t \in [a, b]$ unless a closed form for all minimizing geodesics can be found, which is generally not the case. If we assume that for all $t \in [a, b]$, $h(t)$ and $e$ are simultaneously contained within some geodesically convex neighborhood on which the Riemannian exponential map is a diffeomorphism, then we have $d(h, e) = \|\exp_h^{-1}(e)\|$. In such a case, consider the family $\gamma: [0, 1] \times [a, b] \to G$ of geodesics given by $\gamma(s, t) = \exp_{h(t)}(s \exp^{-1}_{h(t)}(e))$, which satisfies $\gamma(0, t) = h(t)$ and $\gamma(1, t) = e$ for all $t \in [a, b]$. It then follows that
\begin{align*}
    \frac{d}{dt}d^2(h(t), e) &= \frac{d}{dt}\left(\int_0^1 \|\partial_s \gamma\|ds \right)^2 \\
    &= 2\int_0^1 \left<D_t \partial_s \gamma, \partial_s \gamma \right>ds \\
    &= 2\int_0^1 \frac{d}{ds}\left<\partial_t \gamma, \partial_s \gamma \right>ds \\
    &= \left<\dot{h}(t), -2\exp_{h(t)}^{-1}(e) \right>,
\end{align*}
from which we see that $\grad_1 d^2(h(t), e) =-2\exp_{h(t)}^{-1}(e)$. By a standard application of the chain rule, and the fact that $h(t)^{-1} \exp^{-1}_{h(t)}(e) = -\exp^{-1}_{e}(h(t))$, we obtain:
\begin{equation}\label{potential_exp}
    L_{h(t)^{-1 \ast}}\grad_1 V_{\ext}(h(t), e) = -\frac{2N\tau \|\exp^{-1}_{e}(h(t))\|^{2N-2}}{D^{2N}(1 + (\|\exp^{-1}_{e}(h(t))\|/D)^{2N})^2}\exp^{-1}_{e}(h(t)).
\end{equation} 
When the metric is only left-invariant, equation \eqref{potential_exp} typically suffers from the same problems that calculating $d(h(t), e)$ does. However, under assumption $\textbf{G1}^\ast$, $\exp_e$ agrees with the \textit{Lie exponential map} $\Exp: \g \to G$ defined for all $\xi \in \g$ by $\Exp(\xi) = \gamma(1)$, where $\gamma$ is the unique solution to $\dot{\gamma} = \gamma \xi$ with $\gamma(0) = e$. That is, the one-parameter subgroups of $G$ are exactly the geodesics through the identity. Wherever defined, the inverse of Lie exponential map $\Log: G \to \g$ is called the \textit{Logarithmic map}. Hence, equation \eqref{potential_exp} is equivalent to 
\begin{equation}\label{potential_Log}
    L_{h(t)^{-1 \ast}}\grad_1 V_{\ext}(h(t), e) = -\frac{2N\tau \|\Log(h(t))\|^{2N-2}}{D^{2N}(1 + (\|\Log(h(t))\|/D)^{2N})^2}\Log(h(t)).
\end{equation} 
If $G$ is a matrix Lie group, it can be seen that $\displaystyle{\Exp(A) = \sum_{k = 0}^\infty \frac{A^k}{k!}}$ for all $A \in G$. For certain subgroups of $GL(n)$, closed forms for $\Exp$ and $\Log$ can be obtained, which we will utilize for the special case of $G = \SO(3)$ in Section \ref{section: rigid_body_cubics}. 

From Section \ref{section: example_geo_SO3}, we see that $\left< \cdot, \cdot\right>_{\Bi}$ aligns with the left-invariant metric for a rigid body in the case that the coefficient of inertia matrix $\mathbb{M}$ (or equivalently that the moment of inertia tensor $\mathbb{J}$) is a scalar multiple of the identity. This is akin to saying that the rigid body is symmetric about all of its axes, which is a strong assumption that typically does not hold in application. Hence, even in the case that $G$ admits a bi-invariant metric, this metric may not correspond to the kinetic energy of the physical system that we are interested in studying, and a left-invariant metric must be used anyways. However, as we will see, we may still take advantage of the bi-invariant metric to design our artificial potential.

\begin{quote}
\noindent$\textbf{G1}^{\ast \ast}$\textbf{:} $G$ is a Lie group equipped with both a left-invariant and a bi-invariant Riemannian metric, denoted by $\left< \cdot, \cdot\right>$ and $\left< \cdot, \cdot\right>_{\Bi}$, respectively. Denote the Levi-Civita connection corresponding to $\left< \cdot, \cdot \right>$ by $\nabla$, and the corresponding Riemannian $\g$-connection and Riemannian curvature by $\nabla^\g$ and $R$, respectively. Let $\beta: \g \to \g$ be the linear endomorphism such that $\left<\xi, \eta\right>_{\Bi} = \left< \beta(\xi), \eta\right>$ for all $\xi, \eta \in \g$.
\end{quote}

\begin{proposition}\label{prop: reduction_left_inv_comp}
Consider the variational obstacle avoidance problem \textbf{P1} with $Q = G$ and the additional assumptions $\textbf{G1}^{\ast\ast}, \textbf{G2}$. Then $g \in \Omega$ solves \eqref{eqq1} if and only if $\xi := g^{-1} \dot{g}$ and $h := g_0^{-1}g$ solve:
\begin{align}
    \dot{\xi} &= \eta + \ad^\dagger_{\xi} \xi, \label{ad_dag_leftbi}\\
   0&= \ddot{\eta} + 2\nabla^\g_\xi \dot{\eta} + \nabla^\g_{\eta} \eta + \nabla^\g_{\ad^\dagger_\xi \xi} \eta + \nabla^\g_\xi \nabla^\g_\xi \eta + R\big{(}\eta, \xi \big{)}\xi + \beta(L_{h^{-1 \ast}} \grad_1^{\Bi} V_{\ext}(h, e)) , \label{eqq2_leftbi} \\
    \dot{h} &= L_{h^\ast}\xi, \label{eqh}
\end{align}
where $\grad_1^{\Bi} V_{\ext}(h. e)$ denotes the gradient vector field of $V_{\ext}$ with respect to its first argument and $\left<\cdot, \cdot\right>_{\Bi}$.
\end{proposition}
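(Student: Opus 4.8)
The plan is to run the proof of Proposition~\ref{prop: reduction_left_inv} essentially verbatim, noting that the only place where the choice of metric used to define the potential gradient enters is the variation of the potential term, and then to convert between the left-invariant and bi-invariant gradients using the endomorphism $\beta$.

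First I would observe that the dynamical quantities $D_t^3\dot g$ and $R(D_t\dot g,\dot g)\dot g$, together with their reductions from Proposition~\ref{prop: cubic_red}, depend only on the left-invariant metric $\left\langle\cdot,\cdot\right\rangle$ and its Levi-Civita connection $\nabla$ (hence on $\nabla^\g$ and $R$); the auxiliary bi-invariant metric plays no role here. Therefore the computation of $\delta J(g)$ in the proof of Proposition~\ref{prop: reduction_left_inv} carries over unchanged: writing $\sigma := g^{-1}\delta g$ and $\delta h = L_{h^\ast}\sigma$, one still arrives at the expression in which the integrand is paired with $\sigma$, namely
\[
\delta J(g) = \int_a^b \left\langle \ddot\eta + 2\nabla^\g_\xi\dot\eta + \nabla^\g_\eta\eta + \nabla^\g_{\ad^\dagger_\xi\xi}\eta + \nabla^\g_\xi\nabla^\g_\xi\eta + R(\eta,\xi)\xi + L_{h^{-1\ast}}\grad_1 V_{\ext}(h,e),\ \sigma\right\rangle dt,
\]
where $\grad_1$ denotes the gradient with respect to $\left\langle\cdot,\cdot\right\rangle$. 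The point is that the pairing against $\sigma$ is taken in the left-invariant metric, since that is the metric with respect to which the dynamical terms are defined, so the potential contribution must be expressed through the same metric before the two can be combined.

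The new ingredient is the identity
\[
L_{h^{-1\ast}}\grad_1 V_{\ext}(h,e) = \beta\bigl(L_{h^{-1\ast}}\grad_1^{\Bi} V_{\ext}(h,e)\bigr).
\]
To prove it, fix $t$, set $U := V_{\ext}(\cdot, e)$, and note that both $\left\langle\cdot,\cdot\right\rangle$ and $\left\langle\cdot,\cdot\right\rangle_{\Bi}$ are left-invariant (the latter since bi-invariance implies left-invariance). Hence for every $Y \in T_h G$,
\[
dU_h(Y) = \left\langle L_{h^{-1\ast}}\grad_1 V_{\ext}(h,e),\, L_{h^{-1\ast}}Y\right\rangle_\g = \left\langle L_{h^{-1\ast}}\grad_1^{\Bi} V_{\ext}(h,e),\, L_{h^{-1\ast}}Y\right\rangle_{\Bi},
\]
and the last expression equals $\left\langle \beta\bigl(L_{h^{-1\ast}}\grad_1^{\Bi} V_{\ext}(h,e)\bigr),\, L_{h^{-1\ast}}Y\right\rangle_\g$ by the defining property of $\beta$. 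Since $L_{h^{-1\ast}}$ is a linear isomorphism and $Y$ is arbitrary, the identity follows. Substituting it into the reduced variation of $J$, applying the fundamental lemma of the calculus of variations (which yields both implications of the ``if and only if''), and using $\dot h = h\xi$ together with $\eta = \dot\xi - \ad^\dagger_\xi\xi$ from Proposition~\ref{prop: cubic_red}, we recover exactly \eqref{ad_dag_leftbi}--\eqref{eqh}.

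I do not expect a genuine obstacle here; the one thing requiring care is the bookkeeping of the two metrics — keeping the dynamical terms paired in $\left\langle\cdot,\cdot\right\rangle$ while re-expressing the potential gradient via $\left\langle\cdot,\cdot\right\rangle_{\Bi}$ — and verifying that the conversion factor is precisely $\beta$ rather than its inverse, its $\left\langle\cdot,\cdot\right\rangle$-adjoint, or an $\Ad$-conjugate of it, which the left-invariance of both metrics makes transparent.
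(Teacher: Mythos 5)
Your proposal is correct and follows essentially the same route as the paper: the paper likewise reuses the variational computation of Proposition~\ref{prop: reduction_left_inv} for the dynamical terms and handles the potential by rewriting $\left<\frac{\partial V_{\ext}}{\partial h}, L_{h\ast}\sigma\right>$ through the bi-invariant gradient, left-invariance of $\left<\cdot,\cdot\right>_{\Bi}$, and the defining relation of $\beta$. The only difference is organizational — you package that chain of equalities as the standalone pointwise identity $L_{h^{-1\ast}}\grad_1 V_{\ext}(h,e)=\beta\bigl(L_{h^{-1\ast}}\grad_1^{\Bi} V_{\ext}(h,e)\bigr)$ and then substitute it into the earlier proposition, which is mathematically the same argument.
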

\begin{proof}
As in Proposition \ref{prop: reduction_left_inv_comp}, we let $\sigma(t) := g(t)^{-1}\delta g(t)$. Then, $$\delta h = g_0^{-1} \delta g = g_0^{-1} g(t) \sigma(t) = h(t) \sigma(t) = L_{h(t)^\ast} \sigma(t).$$ Hence,
\begin{align*}
    \delta\int_a^b V_{\ext}(h, e)dt &= \int_a^b \left<\frac{\partial V_{\ext}}{\partial h}, L_{h(t)\ast}\sigma\right>_{\g^\ast}dt \\
    &= \int_a^b \left<\grad_1^{\Bi} V(h, e), L_{h(t)\ast}\sigma\right>_{\Bi} dt \\
    &= \int_a^b \left<L_{h(t)^{-1 \ast}}\grad_1^{\Bi} V(h. e), \ \sigma\right>_{\Bi} dt \\
    &= \int_a^b \left<\beta(L_{h(t)^{-1 \ast}}\grad_1^{\Bi} V(h. e)), \ \sigma\right> dt,
\end{align*}
The remainder of the proof follows identically to that of Proposition \ref{prop: reduction_left_inv_comp}.
\end{proof}

We now choose our obstacle avoidance potential as $V(g) = \frac{\tau}{1 + (d_{\Bi}(g, g_0)/D)^{2N}}$, where $d_{\Bi}$ is the Riemannian distance function corresponding to the bi-invariant metric $\left<\cdot, \cdot\right>_{\Bi}$. The extended potential is similarly defined by $V_{\ext}(g_1, g_2) = \frac{\tau}{1 + (d_{\Bi}(g_1, g_2)/D)^{2N}}$. Assume that $h$ is contained within a geodesically convex neighborhood of $e$ on the full interval $[a, b]$, so that $d_{\Bi}(e, h) = \|\exp_e^{-1}(h)\|_{\Bi}$, where $\exp$ is the Riemannian exponential map corresponding to the Levi-Civita connection induced by $\left< \cdot, \cdot \right>_{\Bi}$. Since the metric is bi-invariant, we again have $\exp_e^{-1}(h) = \Log(h)$, and because we are taking the gradient of $V_{\ext}$ with respect to this metric, we obtain

\begin{equation}\label{potential_Log_left}
    \beta(L_{h(t)^{-1 \ast}}\grad_1^{\Bi} V_{\ext}(h(t), e)) = -\frac{2N\tau \|\Log(h(t))\|^{2N-2}}{D^{2N}(1 + (\|\Log(h(t))\|/D)^{2N})^2}\beta(\Log(h(t))).
\end{equation}

\subsection{Example 2: Necessary conditions for Rigid Body on $\SO(3)$}\label{section: rigid_body_cubics}
We return to the example of the rigid body modelled on $\SO(3)$. As in Section \ref{section: example_geo_SO3}, we may equip $\SO(3)$ with the left-invariant metric $\left<\dot{R}_1, \dot{R}_2\right> = \tr(\dot{R}_1 \mathbb{M} \dot{R}_2^T)$ for all $R \in \SO(3), \dot{R}_1, \dot{R}_2 \in T_R \SO(3)$. Identifying $\so(3)$ with $\R^3$ under the hat isomorphism, we have $ad^\dagger_{\xi} \sigma = \mathbb{J}^{-1}\left(\mathbb{J}\sigma \times \xi\right)$ for all $\xi, \sigma \in \R^3$ and so by Lemma \ref{lemma: covg-decomp}, the Riemannian $\g$-connection (with respect to the Levi-Civita connection of the left-invariant metric) takes the form 
\begin{equation}\label{gconSO3}
    \nabla^\g_{\xi} \sigma = \xi \times \sigma + \mathbb{J}^{-1}\left(\xi \times \mathbb{J}\sigma + \sigma \times \mathbb{J}\xi\right)
\end{equation}
for all $\xi, \sigma \in \R^3$.

We also consider the bi-invariant metric $\left< \cdot, \cdot \right>_{\Bi}$ defined by $\left< \dot{R}_1, \dot{R}_2\right>_{\Bi} = \tr(\dot{R}_1 \dot{R}_2^T)$ for all $R \in \SO(3), \dot{R}_1, \dot{R}_2 \in T_R \SO(3)$. Through the hat isomorphism, we then get $\left<\hat{\Omega}_1, \hat{\Omega}_2\right>_{\Bi} = \Omega_1^T \Omega_2$, which is just the standard inner product on $\R^3$. From this it is clear that $\beta(\hat{\Omega}) = \hat{\Omega}\mathbb{M}^{-1}$ for all $\hat{\Omega} \in \so(3)$, since $\left< \beta(\hat{\Omega}_1), \hat{\Omega}_2\right> = \tr((\hat{\Omega}_1 \mathbb{M}^{-1})\mathbb{M}\hat{\Omega}_2^T) = \tr(\hat{\Omega}_1\hat{\Omega}_2^T) = \left< \beta(\hat{\Omega}_1), \hat{\Omega}_2\right>_{\Bi}$ for all $\hat{\Omega}_1, \hat{\Omega}_2 \in \so(3)$. We may also consider $\beta$ to be a map from $\R^3$ to $\R^3$ as $\beta(\Omega) = \mathbb{J}^{-1}\Omega$ for all $\Omega \in \R^3$.

Consider a point-obstacle $R_0 \in \SO(3)$ and the artificial potential $V(R) = \frac{\tau}{1 + (d_{\Bi}(R, R_0)/D)^{2N}}$ with the extended potential $V_{\ext}(R_1, R_2) = \frac{\tau}{1 + (d_{\Bi}(R_1, R_2)/D)^{2N}}$, as in Section \ref{sec: red_biinv}. With the aim of using Proposition \ref{prop: reduction_left_inv_comp}, we now seek to calculate the logarithmic map in $\SO(3)$. This is provided by Proposition $5.7$ of \cite{bullo2019geometric}:
\begin{lemma}\label{Log}
$\Exp: \so(3) \to \SO(3)$ is diffeomorphism between $\{\hat{\Omega}: \Omega \in \R^3, \ \Omega^T \Omega \le \pi^2\}$ and $\{R \in \SO(3): \ \tr(R) \ne -1\}$, and the logarithmic map is given by
$$\Log(R) = \begin{cases} 0, &R = I \\ \frac{\phi(R)}{\sin(\phi(R))}(R - R^T), &R \ne I
\end{cases}$$
where $\phi(R) := \arccos(\frac12(\tr(R) - 1))$. Moreover, we have $\|\Log(R)\|_{\Bi} = \phi(R)$.
\end{lemma}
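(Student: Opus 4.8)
The plan is to reduce everything to Rodrigues' rotation formula together with Euler's theorem on rotations. First I would record the closed form of $\Exp$: for $\Omega\in\R^{3}$ put $\theta:=\sqrt{\Omega^{T}\Omega}=\|\hat\Omega\|_{\Bi}$; using $\hat\Omega^{3}=-\theta^{2}\hat\Omega$ (which follows from $\hat\Omega^{2}=\Omega\Omega^{T}-\theta^{2}I$ and $\hat\Omega\Omega=0$), one sums the series $\Exp(\hat\Omega)=\sum_{k\ge 0}\hat\Omega^{k}/k!$ by grouping even and odd powers, obtaining $\Exp(\hat\Omega)=I+\frac{\sin\theta}{\theta}\hat\Omega+\frac{1-\cos\theta}{\theta^{2}}\hat\Omega^{2}$, with the two scalar coefficients read as their (even, real-analytic) limits at $\theta=0$. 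In particular $\Exp$ is smooth on all of $\so(3)$; $\Exp(\hat\Omega)$ is the rotation through angle $\theta$ about the axis $\R\Omega$; and since $\tr\hat\Omega=0$ and $\tr\hat\Omega^{2}=-2\theta^{2}$ one gets $\tr\Exp(\hat\Omega)=1+2\cos\theta$.

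From the trace formula, $\tr\Exp(\hat\Omega)=-1$ exactly when $\theta\in\pi+2\pi\Z$, so $\Exp$ maps the open ball $B:=\{\hat\Omega:\Omega^{T}\Omega<\pi^{2}\}$ into $\{R\in\SO(3):\tr R\ne-1\}$ (this is the content of the displayed set equality, the boundary sphere $\theta=\pi$ being exactly where antipodal points collapse to a common rotation by $\pi$, which has $\tr=-1$). For injectivity on $B$: taking the skew-symmetric part of Rodrigues' formula gives $\Exp(\hat\Omega)-\Exp(\hat\Omega)^{T}$ proportional to $\sin\theta\,\hat\Omega$, while the angle is recovered intrinsically as $\theta=\arccos\bigl(\frac12(\tr\Exp(\hat\Omega)-1)\bigr)=\phi(\Exp(\hat\Omega))\in[0,\pi)$; since $\sin\theta>0$ for $\theta\in(0,\pi)$, this determines $\hat\Omega$ (the case $\theta=0$ forcing $\hat\Omega=0$ and $R=I$). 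Solving for $\hat\Omega$ yields precisely the stated formula for $\Log$, and one checks it is smooth on $\{\tr R\ne-1\}$: writing $c:=\frac12(\tr R-1)=\cos\phi(R)\in(-1,1]$, the scalar coefficient $\frac{\phi(R)}{\sin\phi(R)}$ is the pullback under $c=\cos t$ of the even real-analytic map $t\mapsto t/\sin t$, hence smooth in $c$ and therefore in $R$, with value at $R=I$ equal to its limit.

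For surjectivity onto $\{\tr R\ne-1\}$: by Euler's theorem any $R\in\SO(3)$ is a rotation through some angle $\theta$ about a unit axis $n$, so $R=\Exp(\widehat{\theta n})$; the constraint $1+2\cos\theta=\tr R\ne-1$ forces $\theta\not\equiv\pi\ (\mathrm{mod}\ 2\pi)$, so $\theta$ may be chosen in $[0,\pi)$, putting $\widehat{\theta n}\in B$. Alternatively one verifies directly from Rodrigues' formula that $\Exp$ applied to $\frac{\phi(R)}{\sin\phi(R)}(R-R^{T})$ returns $R$, using that $R-R^{T}$ equals $2\sin\phi(R)$ times the hat of the axis of $R$. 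Combining the three steps, $\Exp|_{B}$ is a smooth bijection onto $\{\tr R\ne-1\}$ whose inverse is the smooth map displayed in the lemma, hence a diffeomorphism. Finally $\|\Log(R)\|_{\Bi}=\|\hat\Omega\|_{\Bi}=\|\Omega\|=\theta=\phi(R)$, using the identification of $\langle\cdot,\cdot\rangle_{\Bi}$ on $\so(3)$ with the Euclidean inner product on $\R^{3}$ recorded above.

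The main obstacle is the boundary bookkeeping: pinning down exactly why the open ball of radius $\pi$ maps bijectively onto precisely $\{\tr R\ne-1\}$ — no more and no less — and, relatedly, checking that the $\Log$ formula extends smoothly across $R=I$, where both $\phi(R)$ and $R-R^{T}$ vanish and one must invoke the smooth even extension of $t/\sin t$. The remaining work is the routine manipulation of Rodrigues' formula and extraction of its skew-symmetric part.
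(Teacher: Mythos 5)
The paper offers no proof of this lemma at all---it is quoted as Proposition 5.7 of \cite{bullo2019geometric}---so your self-contained derivation via Rodrigues' formula and Euler's theorem is necessarily a different route, and it is essentially the standard proof of the cited result: summing the exponential series using $\hat\Omega^3=-\theta^2\hat\Omega$, reading the rotation angle off the trace, recovering the axis from the skew-symmetric part, and handling smoothness at $R=I$ through the even analytic extension of $t\mapsto t/\sin t$. That outline is sound, and you are also right to work with the \emph{open} ball $\Omega^T\Omega<\pi^2$: as displayed the lemma says $\le\pi^2$, which cannot be literally correct (the closed ball is compact while $\{R\in\SO(3):\tr(R)\ne-1\}$ is open), so your reading is the intended one.

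One concrete slip remains. Your own identity $R-R^T=\frac{2\sin\theta}{\theta}\hat\Omega$ gives, upon solving, $\Log(R)=\frac{\phi(R)}{2\sin\phi(R)}\,(R-R^T)$, i.e.\ the displayed formula with an extra factor $\frac12$. Hence the claim that solving ``yields precisely the stated formula'' is not accurate, and your alternative verification---that $\Exp$ applied to $\frac{\phi(R)}{\sin\phi(R)}(R-R^T)$ returns $R$---would in fact return the rotation about the same axis through angle $2\phi(R)$. Likewise, with the coefficient as displayed one gets $\|\Log(R)\|_{\Bi}=2\phi(R)$ under the identification $\langle\hat\Omega_1,\hat\Omega_2\rangle_{\Bi}=\Omega_1^T\Omega_2$, contradicting the final assertion of the lemma, whereas with the $\frac12$ restored everything you wrote is consistent. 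So either carry the $\frac12$ through your argument and flag the missing factor in the statement as transcribed, or do not claim agreement with the displayed formula; as written, the proposal asserts two incompatible normalizations in the same paragraph.
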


Using Lemma \ref{Log}, equation \ref{potential_Log_left} takes the form:
\begin{equation}\label{potential_Log_SO3}
    \beta(L_{H^{-1 \ast}}\grad_1 V_{\ext}(H, e)) = -\frac{2N\tau \phi(H)^{2N-1}}{\sin(\phi(H))D^{2N}(1 + (\phi(H)/D)^{2N})^2} (H - H^T)\mathbb{M}^{-1}.
\end{equation} 
for all $H \in \SO(3)$ with $\tr(H) \ne -1$. Hence, by Proposition \ref{prop: reduction_left_inv_comp}, $R$ solves \eqref{eqq1} if and only if $\hat{\Omega} := R^{-1} \dot{R}$ and $H := R_0^{-1} R$ solve:
\small{\begin{align}
    \mathbb{J}\dot{\Omega} &= \mathbb{J}\Omega \times \Omega + \mathbb{J}\eta \label{ad_dag_SO3}\\
    \frac{2N\tau \phi(H)^{2N-1}}{\sin(\phi(H))D^{2N}(1 + (\phi(H)/D)^{2N})^2} \mathbb{J}^{-1}(H - H^T)^\vee \label{eqq2_SO3} &=\ddot{\eta} + 2\nabla^\g_\Omega \dot{\eta}\\ &+ \nabla^\g_{\eta} \eta + \nabla^\g_{\mathbb{J}\Omega \times \Omega} \eta + \nabla^\g_\Omega \nabla^\g_\Omega \eta + R\big{(}\eta, \Omega \big{)}\Omega \nonumber \\
    \dot{H} &= H\Omega, \label{eqqalpha_SO3}
\end{align}}where $\nabla^\g$ is calculated as in equation  \eqref{gconSO3}, and the Riemannian curvature can be found by $R(\eta, \xi)\xi = \nabla^\g_\eta \nabla^\g_\xi \xi - \nabla^\g_\eta \nabla^\g_\xi \xi - \nabla^\g_{\eta \times \xi} \xi$.

\section{Reduction on Riemannian Homogeneous Spaces with Broken Symmetry}\label{Sec: Reduction_Homo}
%Suppose that $G$ is a Lie group endowed with a left-invariant Riemannian metric and $K$ is a closed Lie subgroup of $G$. Then it is well-known that the quotient space $H := G/K$ is a Riemannian symmetric space. Moreover, the relations $[\s, \s] \subset \s, [\m, \m] \subset \s, [\m, \s] \subset \m$ hold. Using this decomposition of $T_gG$, it is possible to extend the notion of vertical and horizontal tangent vectors on G to vertical and horizontal vector fields and curves.

Let $G$ be a connected Lie group. A \textit{Homogeneous space} space $H$ of $G$ is a smooth manifold on which $G$ acts transitively. It can be shown that for any $g \in G$, we have $G/\text{Stab}(g) \cong H$ as differentiable manifolds, where $\text{Stab}(g)$ denotes the \textit{stabilizer subgroup} (also called the \textit{isotropy subgroup}) of $g$. Moreover, for any closed Lie subgroup $K$, the $G$-action $\Phi_g: G/K \to G/K$ satisfying $\Phi_g([h]) = [gh]$ for all $g, h \in G$ is transitive, and so $G/K$ is a homogeneous space. Hence, we may assume without loss of generality that $H := G/K$ is a homogeneous space of $G$ for some closed Lie subgroup $K$. 

Let $\pi: G \to H$ be the canonical projection map. We define the vertical subspace at $g \in G$ by $V_g := \ker (\pi_\ast \vert_g)$, from which we may construct the vertical bundle as $VG := \bigsqcup_{g \in G} \{g\} \times V_g$. Given a Riemannian metric $\left< \cdot, \cdot\right>_G$ on $G$, we may define the horizontal subspace at any point $g \in G$ (with respect to $\left< \cdot, \cdot \right>_G)$ by $\text{Hor}_g := V_g^\perp$, and similarly define the horizontal bundle as $HG := \bigsqcup_{g \in G} \{g\} \times Hor_g$. Both the vertical and horizontal bundles are vector bundles (as discussed in Section \ref{Sec: background}), and are in fact subbundles of the tangent bundle $TG$. It is clear that $T_g G = V_g \oplus \text{Hor}_g$ for all $g \in G$, so that the Lie algebra $\g$ of $G$ admits the decomposition $\g = \mathfrak{s} \oplus \mathfrak{h}$ where $\mathfrak{s}$ is the Lie algebra of $K$ and $\mathfrak{h} \cong T_{\pi(e)} H$. We denote the orthogonal projections onto the vertical and horizontal subspaces by $\mathcal{V}$ and $\mathcal{H}$, respectively. 

%\todo{The notation for the ser of vector fields and the set of sections was not introduced before}

A smooth section $Z \in \Gamma(HG)$ is called a \textit{horizontal vector field}. That is, $Z \in \Gamma(TG)$ and $Z(g) \in \text{Hor}_g$ for all $g \in G.$ A vector field $Y \in \Gamma(TG)$ is said to be $\pi$-related to some $X \in \Gamma(TH)$ if $\pi_\ast Y_g = X_{\pi(g)}$ for all $g \in G$. Given any vector field $X \in \Gamma(TH)$, there exists a unique vector field $\tilde{X} \in \Gamma(HG)$ called the \textit{horizontal lift} of $X$ which is $\pi$-related to $X$. Hence, horizontal lifting provides an injective $\R$-linear map $\tilde{\cdot}: \Gamma(TH) \to \Gamma(HG)$. In general, this map will not be surjective, as it need not be the case that $Z_g = Z_h$ whenever $\pi(g) = \pi(h)$ for $Z \in \Gamma(HG)$. The image of the horizontal lift map will be denoted by $\mathcal{B}(G) \subset \Gamma(HG)$, and its elements will be called \textit{basic vector fields}. That is, $\mathcal{B}(G) \cong \Gamma(TH)$ as an $\R$-vector space, and so a vector field $Z \in \Gamma(TG)$ is basic if and only if it is the horizontal lift of some vector field in $\Gamma(TH)$. Basic vector fields are precisely those which can be pushed-forward to a smooth non-zero vector field on $H$ under $\pi$.

We may also define a horizontal lift of a curve $q: [a, b] \to H$ as a curve $\tilde{q}: [a, b] \to G$ such that $\pi \circ \tilde{q} = q$ and $\dot{\tilde{q}}$ is horizontal. $\tilde{q}$ is not unique in general, but it is unique up to a choice of base point. That is, there is a unique horizontal lift $\tilde{q}$ of $q$ satisfying $\tilde{q}(0) = g_0$, for all $g_0 \in \pi^{-1}(q(0))$. We may similarly call a curve $g: [a, b] \to G$ basic when it is the horizontal lift of some curve $q: [a, b] \to H$, and a vector field $\tilde{X} \in \Gamma(g)$ basic when it is horizontal and $\pi$-related to some $X \in \Gamma(q)$. That is, $(\pi_\ast)_{g(t)} \tilde{X}(t) = X(t)$ for all $t \in [a, b]$. However, it turns out that these distinctions are redundant, as any smooth curve $g: [a, b] \to G$ satisfying $\dot{g}(t) \in \text{Hor}_{g(t)}$ for all $t \in [a, b]$ is necessarily the horizontal lift of some $q: [a, b] \to H$. Namely, it is the horizontal lift of $q := \pi \circ g$. Similarly, every horizontal vector field along a basic curve is basic. To see this, first observe that any curve $\tilde{\eta}: [a, b] \to \mathfrak{h}$ induces a curve $\eta: [a, b] \to T_{\pi(e)}H$ under $(\pi_\ast)_e$. Now notice that the left $G$-action $\Phi_g$ on $H$ satisfies $\Phi_g \circ \pi = \pi \circ L_g$ for all $g \in G$. Taking the differential of both sides, we see that the following diagram commutes:
\begin{equation}\label{commutative_diag}
\begin{tikzcd}
Hor_g \arrow{r}{d\pi_g} \arrow[swap]{d}{L_{g^{-1 \ast}}} & T_{\pi(g)}(G/K) \arrow{d}{\Phi_{g^{-1 \ast}}} \\%
\mathfrak{h} \arrow{r}{d\pi_e}& T_{\pi(e)}(G/K).
\end{tikzcd}  
\end{equation}
Since $\Phi$ is a transitive action, we may translate $\eta$ to a vector field $X$ along any curve $q: [a, b] \to H$ via $X(t) := \Phi_{\tilde{q}(t)\ast}(\eta(t))$ for all $t \in [a, b]$, where $\tilde{q}$ is a horizontal lift of $q$. $X$ may then be horizontally lifted to a unique $\tilde{X}$ along $\tilde{q}$. Then $\tilde{X}$ is basic, and due to the identification \eqref{commutative_diag}, it follows that $L_{(\tilde{q}^{-1})\ast} \tilde{X} \equiv \tilde{\eta}$. This is summarized in the following lemma:

%\begin{lemma}\label{left_hor_vert}
%Let $\xi \in \mathfrak{h}$ and $\sigma \in \mathfrak{s}$. Then $\phi(\xi) \in \Gamma(HG)$ and $\phi(\sigma) \in \Gamma(VG)$.
%\end{lemma}
%\begin{proof}
%First observe that for any $g \in G$, we have 
%\begin{align*}
    %\pi_\ast \phi(\sigma)(g) &= \pi_\ast (L_{g\ast} \sigma) = %\Phi_{g\ast} (\pi_\ast \sigma) = 0,
%\end{align*}
%so that $\phi(\sigma)(g) \in V_g$, and hence $\phi(\sigma) \in \Gamma(VG)$. To see that $\phi(\xi) \in \Gamma(HG)$, observe that
%\begin{align*}
%\left<\phi(\xi)(g), \phi(\sigma)(g)\right>_G = \left<\phi(\xi)(e), \phi(\sigma)(e)\right>_G = \left<\xi, \sigma\right>_{\g} = 0.
%\end{align*}
%\end{proof

%\todo{se hace dificil leer todo el comienzo de la seccion hasta $4.1$ pon ``definitions'' para dar una estructura y una facil lectura} 

%\todo{define Riemannian submersion with words} 

\begin{lemma}\label{lemma: curves_in_h}
Suppose that $q: [a, b] \to H$ and $\tilde{q}: [a, b] \to G$ is a horizontal lift of $q$. Then, for any $\tilde{\eta}: [a, b] \to \mathfrak{h}$, there exists a unique $X \in \Gamma(q)$ such that its horizontal lift $\tilde{X}$ along $\tilde{q}$ satisfies $L_{\tilde{q}(t)^{-1}\ast} \tilde{X}(t) = \tilde{\eta}(t)$ for all $t\in[a,b]$.
\end{lemma}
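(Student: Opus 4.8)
The plan is to construct $X$ explicitly by transporting $\tilde{\eta}$ into the horizontal bundle of $G$ via left-translation and then pushing it down to $H$, exactly along the lines sketched in the paragraph preceding the lemma, and to extract both the defining relation and uniqueness directly from this construction together with the commutative diagram \eqref{commutative_diag}. First I would set $\tilde{X}(t) := L_{\tilde{q}(t)\ast}\tilde{\eta}(t)$ for $t \in [a,b]$. Since $t \mapsto \tilde{q}(t)$ and $t \mapsto \tilde{\eta}(t) \in \mathfrak{h}$ are smooth and $(g,\xi) \mapsto L_{g\ast}\xi$ is a smooth map $G \times \g \to TG$, the map $\tilde{X}$ is a smooth vector field along $\tilde{q}$. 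By diagram \eqref{commutative_diag}, $L_{\tilde{q}(t)^{-1}\ast}$ restricts to a linear isomorphism $\text{Hor}_{\tilde{q}(t)} \to \mathfrak{h}$, so $L_{\tilde{q}(t)\ast}$ carries $\mathfrak{h}$ onto $\text{Hor}_{\tilde{q}(t)}$; hence $\tilde{X}(t) \in \text{Hor}_{\tilde{q}(t)}$ for every $t$, i.e., $\tilde{X}$ is horizontal. I would then define $X(t) := (d\pi_{\tilde{q}(t)})(\tilde{X}(t))$, which is a smooth vector field along $\pi \circ \tilde{q} = q$, so $X \in \Gamma(q)$.

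To finish existence, I would observe that $d\pi_{\tilde{q}(t)}$ restricts to an isomorphism $\text{Hor}_{\tilde{q}(t)} \to T_{q(t)}H$ (its kernel is $V_{\tilde{q}(t)}$, of which $\text{Hor}_{\tilde{q}(t)}$ is the orthogonal complement), so that $\tilde{X}$—being horizontal and $\pi$-related to $X$ by construction—is precisely the horizontal lift of $X$ along $\tilde{q}$; and then the required relation is immediate: $L_{\tilde{q}(t)^{-1}\ast}\tilde{X}(t) = L_{\tilde{q}(t)^{-1}\ast}L_{\tilde{q}(t)\ast}\tilde{\eta}(t) = \tilde{\eta}(t)$. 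For uniqueness, I would take another $X' \in \Gamma(q)$ whose horizontal lift $\tilde{X}'$ along $\tilde{q}$ satisfies $L_{\tilde{q}(t)^{-1}\ast}\tilde{X}'(t) = \tilde{\eta}(t)$; applying $L_{\tilde{q}(t)\ast}$ gives $\tilde{X}'(t) = L_{\tilde{q}(t)\ast}\tilde{\eta}(t) = \tilde{X}(t)$, and since $X'$ is $\pi$-related to $\tilde{X}'$ this forces $X'(t) = (d\pi_{\tilde{q}(t)})(\tilde{X}'(t)) = (d\pi_{\tilde{q}(t)})(\tilde{X}(t)) = X(t)$ for all $t$.

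I do not anticipate a genuine obstacle: the argument is essentially bookkeeping with the identifications supplied by \eqref{commutative_diag}. The only two points that warrant care are (i) the smoothness of $\tilde{X}$ and hence of $X$, and (ii) the fact that left-translation sends $\mathfrak{h}$ isomorphically onto $\text{Hor}_{\tilde{q}(t)}$—which is exactly the content of diagram \eqref{commutative_diag} and rests on the left-invariance of the metric on $G$. Both are routine, so the ``hard part'' here is merely to state the construction cleanly enough that existence and uniqueness both drop out.
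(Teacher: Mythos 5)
Your proposal is correct and is in substance the same argument the paper gives in the paragraph preceding the lemma: the paper builds $X(t)=\Phi_{\tilde q(t)\ast}\bigl((\pi_\ast)_e\tilde\eta(t)\bigr)$ on $H$ and then horizontally lifts it, while you build $\tilde X(t)=L_{\tilde q(t)\ast}\tilde\eta(t)$ on $G$ and project down, and by the commutativity of \eqref{commutative_diag} these are the same objects, just traversed in opposite directions around the square. Your ordering makes the relation $L_{\tilde q(t)^{-1}\ast}\tilde X(t)=\tilde\eta(t)$ and the uniqueness immediate, and correctly flags that left-invariance of the metric is what gives $L_{\tilde q(t)\ast}\mathfrak{h}=\text{Hor}_{\tilde q(t)}$, but this is a presentational difference rather than a different proof.
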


If a Riemannian metric $\left< \cdot, \cdot \right>_H$ on $H$ can be chosen so that $\pi$ is a Riemannian submersion—that is, so that $\pi_\ast\vert_g$ is a linear isometry between $\text{Hor}_g$ and $T_{\pi(g)} H$ for all $g \in G$—we call $H$ a \textit{Riemannian homogeneous space}. It is clear that in such a case, $\left<\Hor(X), \Hor(Y)\right>_G = \left<\pi_\ast X, \pi_\ast Y \right>_H$ for all $X, Y \in T_g G, g \in G$. In particular, $\left< \tilde{X}, \tilde{Y} \right>_G = \left< X, Y \right>_H$ for all $X, Y \in T_g G, g \in G$. The metric $\left< \cdot, \cdot \right>_H$ is said to be \textit{$G$-invariant} if it is invariant under the left action $\Phi_g$ for all $g \in G$.

\begin{lemma}\label{lemma: G-inv_met}
If $\left< \cdot, \cdot\right>_G$ is left-invariant and $\pi: G \to H$ is a Riemannian submersion, then $\left< \cdot, \cdot \right>_H$ is $G$-invariant.
\end{lemma}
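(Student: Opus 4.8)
The plan is to exploit the compatibility relation $\Phi_g \circ \pi = \pi \circ L_g$ together with the fact that left-invariance makes each $L_g$ an isometry of $(G, \left<\cdot,\cdot\right>_G)$. The core observation will be that $L_{g\ast}$ carries horizontal vectors to horizontal vectors, so that horizontal lifts intertwine the two group actions; the submersion property then transfers the invariance of $\left<\cdot,\cdot\right>_G$ to $\left<\cdot,\cdot\right>_H$.

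First I would show that left translation preserves the vertical bundle. Since $\pi(L_g h') = \Phi_g(\pi(h'))$, the map $L_g$ sends the fiber $\pi^{-1}([h'])$ into the fiber $\pi^{-1}(\Phi_g[h'])$, and hence $L_{g\ast}$ maps $V_{h'} = \ker(\pi_\ast\vert_{h'})$ into $V_{gh'}$; since $L_{g\ast}$ is a linear isomorphism and the vertical spaces have equal dimension, this is onto. Next, because $\left<\cdot,\cdot\right>_G$ is left-invariant, $L_g$ is an isometry, so $L_{g\ast}$ preserves orthogonal complements; combined with the previous step this gives $L_{g\ast}(\text{Hor}_{h'}) = \text{Hor}_{gh'}$. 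Consequently, if $\tilde X \in \text{Hor}_h$ is the horizontal lift of $X \in T_{\pi(h)}H$, then $L_{g\ast}\tilde X$ is horizontal at $gh$ and satisfies $\pi_\ast(L_{g\ast}\tilde X) = \Phi_{g\ast}(\pi_\ast \tilde X) = \Phi_{g\ast}X$ by differentiating the compatibility relation; hence $L_{g\ast}\tilde X$ is precisely the horizontal lift of $\Phi_{g\ast}X$.

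With this intertwining in hand the conclusion is a short computation. For $X, Y \in T_{\pi(h)}H$ with horizontal lifts $\tilde X, \tilde Y$ at $h$, the Riemannian submersion property gives $\left<\Phi_{g\ast}X, \Phi_{g\ast}Y\right>_H = \left<L_{g\ast}\tilde X, L_{g\ast}\tilde Y\right>_G$, and left-invariance of $\left<\cdot,\cdot\right>_G$ together with one more application of the submersion property yields $\left<L_{g\ast}\tilde X, L_{g\ast}\tilde Y\right>_G = \left<\tilde X, \tilde Y\right>_G = \left<X, Y\right>_H$. Since $g$, $h$, $X$, $Y$ are arbitrary, $\left<\cdot,\cdot\right>_H$ is $G$-invariant.

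The only genuinely nontrivial point — and the step I would expect to need the most care — is verifying that $L_{g\ast}$ preserves horizontality; note that this uses both ingredients of the hypothesis (the action compatibility to handle the vertical directions, and left-invariance to pass to orthogonal complements), and it would fail for a general Riemannian metric on $G$. Everything else is bookkeeping with the definitions of horizontal lift and Riemannian submersion recalled earlier in the section.
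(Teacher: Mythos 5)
Your proof is correct and follows essentially the same route as the paper: lift $X,Y$ horizontally, identify $L_{g\ast}\tilde X$ with the horizontal lift of $\Phi_{g\ast}X$, and transfer invariance through the submersion isometry. The only difference is that you explicitly verify $L_{g\ast}$ preserves the vertical and horizontal bundles, a step the paper leaves implicit in its commutative diagram \eqref{commutative_diag}; this is a welcome bit of added care, not a different argument.
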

\begin{proof}
Suppose that $q \in H, X, Y \in T_q H, g \in G$, and let $\tilde{X}, \tilde{Y}$ be horizontal lifts of $X, Y$. 
\begin{align*}
    \left<\Phi_{g\ast}X, \Phi_{g\ast}Y\right>_H &= \left<\pi_\ast^{-1} \circ \Phi_{g\ast}X, \pi_\ast^{-1} \circ \Phi_{g\ast}Y\right>_G \\
    &= \left<\pi_\ast^{-1} \circ \Phi_{g\ast} \circ \pi_\ast \tilde{X}, \pi_\ast^{-1} \circ \Phi_{g\ast} \circ \pi_\ast \tilde{Y}\right>_G \\
    &= \left<L_{g\ast} \tilde{X}, L_{g\ast} \tilde{Y} \right>_G
\end{align*}
where the last equality follows from \eqref{commutative_diag}. By the left-invariance of $\left< \cdot, \cdot \right>_G$, we then have that $$\left<L_{g\ast} \tilde{X}, L_{g\ast} \tilde{Y} \right>_G = \left< \tilde{X}, \tilde{Y}\right>_G = \left< X, Y\right>_H.$$
\end{proof}

Denote the Levi-Civita connections on $H$ and $G$ by $\nabla$ and $\tilde{\nabla}$, respectively. The following lemma outlines some useful properties of $\tilde{\nabla}$.

\begin{lemma}\label{lemma: cov_G_to_H}
Let $\tilde{X}, \tilde{Y} \in \mathcal{B}(G)$, and  $X, Y \in \Gamma(TH)$ be the unique vector fields which are $\pi$-related to $\tilde{X}, \tilde{Y}$, respectively. Further suppose that $V \in \Gamma(VG)$. Then, the following identities hold:
\begin{align}
    \tilde{\nabla}_{\tilde{X}}\tilde{Y} &= \widetilde{\nabla_X Y} + \frac12\mathcal{V}([\tilde{X}, \tilde{Y}]), \label{eq: covG-to-covH}\\
    \tilde{\nabla}_{\tilde{X}} V &= \tilde{\nabla}_V \tilde{X}\label{cov_G_commute}
\end{align}
\end{lemma}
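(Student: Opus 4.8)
The plan is to prove both identities by the classical Riemannian-submersion argument, exploiting that basic vector fields are $\pi$-related to their projections and that $\pi_\ast$ restricts to a fiberwise linear isometry. For the horizontal part of \eqref{eq: covG-to-covH}, I would fix a third basic field $\tilde Z\in\mathcal B(G)$ with projection $Z\in\Gamma(TH)$ and write out the Koszul formula for $2\langle\tilde\nabla_{\tilde X}\tilde Y,\tilde Z\rangle_G$ on $G$. Each summand converts termwise into the corresponding term of the Koszul formula for $2\langle\nabla_X Y,Z\rangle_H$ on $H$: a term such as $\tilde X\langle\tilde Y,\tilde Z\rangle_G$ equals $(X\langle Y,Z\rangle_H)\circ\pi$, because $\langle\tilde Y,\tilde Z\rangle_G=\langle Y,Z\rangle_H\circ\pi$ (this is where the Riemannian submersion property enters, applied to horizontal lifts) and $\tilde X$ is $\pi$-related to $X$; a bracket term such as $\langle[\tilde X,\tilde Y],\tilde Z\rangle_G$ equals $\langle[X,Y],Z\rangle_H\circ\pi$, since the bracket of $\pi$-related fields is $\pi$-related, so $\mathcal H[\tilde X,\tilde Y]=\widetilde{[X,Y]}$ and the vertical part of $[\tilde X,\tilde Y]$ is annihilated by pairing with the horizontal field $\tilde Z$. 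Hence $\langle\tilde\nabla_{\tilde X}\tilde Y,\tilde Z\rangle_G=\langle\nabla_X Y,Z\rangle_H\circ\pi=\langle\widetilde{\nabla_X Y},\tilde Z\rangle_G$ for every basic $\tilde Z$, and since basic fields pointwise span the horizontal spaces $\text{Hor}_g$, this forces $\mathcal H(\tilde\nabla_{\tilde X}\tilde Y)=\widetilde{\nabla_X Y}$.

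For the vertical part of \eqref{eq: covG-to-covH}, I would combine the torsion-free identity $\mathcal V(\tilde\nabla_{\tilde X}\tilde Y)-\mathcal V(\tilde\nabla_{\tilde Y}\tilde X)=\mathcal V([\tilde X,\tilde Y])$ with the skew-symmetry $\mathcal V(\tilde\nabla_{\tilde X}\tilde Y)=-\mathcal V(\tilde\nabla_{\tilde Y}\tilde X)$. To prove the latter I would test against an arbitrary $V\in\Gamma(VG)$: metric compatibility together with $\langle\tilde Y,V\rangle_G=0$ gives $\langle\tilde\nabla_{\tilde X}\tilde Y,V\rangle_G=-\langle\tilde Y,\tilde\nabla_{\tilde X}V\rangle_G$; since $[\tilde X,V]$ is $\pi$-related to $[X,0]=0$ and hence vertical, it is orthogonal to $\tilde Y$, so $\tilde\nabla_{\tilde X}V$ may be replaced by $\tilde\nabla_V\tilde X$ inside this pairing; symmetrizing in $\tilde X\leftrightarrow\tilde Y$ and using that $\langle\tilde X,\tilde Y\rangle_G=\langle X,Y\rangle_H\circ\pi$ is constant along the fibers (so $V\langle\tilde X,\tilde Y\rangle_G=0$) yields $\langle\tilde\nabla_{\tilde X}\tilde Y+\tilde\nabla_{\tilde Y}\tilde X,V\rangle_G=0$. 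Adding the two relations gives $\mathcal V(\tilde\nabla_{\tilde X}\tilde Y)=\frac12\mathcal V([\tilde X,\tilde Y])$, which combined with the horizontal part is exactly \eqref{eq: covG-to-covH}.

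Identity \eqref{cov_G_commute} again reduces, via the torsion-free property $\tilde\nabla_{\tilde X}V-\tilde\nabla_V\tilde X=[\tilde X,V]$, to showing that $[\tilde X,V]$ vanishes. The relevant point is that, since $\pi$ being a Riemannian submersion forces $G/K$ to be reductive (so that $\Ad_K$ preserves the horizontal complement $\mathfrak h$, which is also what makes the diagram \eqref{commutative_diag} meaningful), the horizontal distribution is right $K$-invariant; hence a basic field $\tilde X$ is right $K$-invariant and therefore commutes with the fundamental vertical fields that generate the fibers. I expect this last step — isolating exactly the class of vertical fields for which \eqref{cov_G_commute} is meant and checking $[\tilde X,V]=0$ — to be the main obstacle, whereas \eqref{eq: covG-to-covH} is a routine (if lengthy) Koszul computation whose only delicate ingredient is the skew-symmetry of its vertical part treated above.
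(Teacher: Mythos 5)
Your argument for \eqref{eq: covG-to-covH} is correct and complete: the termwise Koszul-formula conversion for the horizontal part, together with the torsion-free identity and the skew-symmetry of the vertical part (proved by pairing against vertical fields and using that $\left<\tilde{X},\tilde{Y}\right>_G$ is constant along fibers), is the standard O'Neill computation for Riemannian submersions. The paper does not prove this half at all—it cites \cite{zhang2018left}—so here you supply a legitimate self-contained proof by a different (more explicit) route.

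The genuine gap is \eqref{cov_G_commute}, and it is exactly where you stopped. Your reduction via torsion-freeness to showing $[\tilde{X},V]=0$ mirrors the paper's proof, but that vanishing cannot be obtained for an arbitrary $V\in\Gamma(VG)$: right $K$-invariance of basic fields (even granting reductivity, which the paper only establishes under a bi-invariant metric in Lemma \ref{h_bi_identities}, not from the Riemannian-submersion hypothesis alone) gives $[\tilde{X},W]=0$ only for fundamental vertical fields $W$, while a general vertical field is locally $V=fW$ with $f\in C^{\infty}(G)$, and $[\tilde{X},fW]=\tilde{X}(f)\,W+f[\tilde{X},W]$ need not vanish. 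So the identity \eqref{cov_G_commute} as literally stated fails for general vertical $V$; what survives—and what your own computation already delivers, since $\pi$-relatedness of $\tilde{X}$ to $X$ and of $V$ to $0$ makes $[\tilde{X},V]$ vertical—is the equality of horizontal components, $\mathcal{H}(\tilde{\nabla}_{\tilde{X}}V)=\mathcal{H}(\tilde{\nabla}_{V}\tilde{X})=A_{\tilde{X}}V$, and this weaker statement is all the lemma is used for later (it enters the computation of $Q(\tilde{W},\tilde{X})\tilde{Y}$ only through $\HorLC_{\mathcal{V}([\tilde{W},\tilde{X}])}\tilde{Y}$ and \eqref{A_X V}). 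Be aware that the paper's own proof is no stronger on this point: it asserts $[\tilde{X},V]=0$ directly from $\pi$-relatedness, which only yields $\pi_{\ast}[\tilde{X},V]=0$, i.e.\ verticality. Your instinct that the class of vertical fields must be restricted (or the conclusion weakened to horizontal parts) is therefore correct; the clean way to close your proof is to state and prove the horizontal-component version, which follows in one line from the verticality of $[\tilde{X},V]$, rather than to pursue $[\tilde{X},V]=0$.
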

\begin{proof}
Equation \eqref{eq: covG-to-covH} follows directly from \cite{zhang2018left}. To see \eqref{cov_G_commute}, observe that $0 = [\tilde{X}, V] = \tilde{\nabla}_{\tilde{X}} V - \tilde{\nabla}_V \tilde{X}$, where the first equality follows from the fact that $\tilde{X}$ is $\pi$-related to $X$, and $V$ is $\pi$-related to the $0$ vector field on $H$, while the second follows from the fact that $\tilde{\nabla}$ is torsion-free.
\end{proof}

\subsection{Reduction on $G/K$ with a Left-Invariant Metric}\label{section: homo_left}
The seven assumptions that we may employ along the remainder of the section are as follows.\\
\begin{quote}
    \textbf{H1} (respectively $\textbf{H1}^\ast$):  $H := G/K$ is a Riemannian homogeneous space, where $G$ satisfies assumption \textbf{G1} (respectively $\textbf{G1}^\ast$). \\
    $\textbf{H1}^{\ast\ast}$: $H := G/K$ is a Riemannian homogeneous space with respect to the left-invariant metric $\left< \cdot, \cdot\right>_G$ on $G$. We denote the levi-civita connection on $G$ with respect to $\left< \cdot, \cdot\right>_G$ by $\tilde{\nabla}$, and the corresponding Riemannian $\g$-connection and curvature tensor are denoted by $\tilde{\nabla}^\g$ and $\tilde{R}$, respectively. $G$ also admits a bi-invariant metric, denoted by $\left< \cdot, \cdot \right>_G^{\text{Bi}}$, and we let $\beta: \g \to \g$ be the linear endomorphism such that $\left<\xi, \eta\right>_G^{\text{Bi}} = \left<\beta(\eta), \eta\right>_G$ for all $\xi, \eta \in \g$. \\
    \textbf{H2:} $\tilde{V}: G \to \R$ defined by $\tilde{V}(g) := V(\pi(g))$ satisfies assumption \textbf{G2}. \\
\end{quote}

%\todo{make a remark to note that some of these seven assumptions are in correspondence with the seven ones given in \cite{bloch2017optimal} and why others are not necessary. Makes more sense in the next section when you have the extended potential, but you can say something here and next another remark in the next section since the 7 hypothesis should be the same}

Let $L := TH \to \R$ be defined by $L(q, X_q) := \left<X_q, X_q\right>_H + V(q)$ and consider the Lagrangian $\tilde{L}: TG \to \R$ defined by $\tilde{L}:= L \circ \pi_\ast$. If we similarly define $\tilde{V} := V \circ \pi$, then it follows that 
\begin{align*}
    \tilde{L}(g, Z_g) &= \|\pi_\ast Z_g\|_H^2 + V(\pi(g)) \\
    &= \|\Hor(Z_g)\|^2_G + \tilde{V}(g),
\end{align*}
for all $g \in G, Z_g \in T_g G$. In particular, if $q: [a, b] \to H$, then 
\begin{align*}
    \tilde{L}(\tilde{q}, \tilde{D}_t \dot{\tilde{q}}) &= \|\Hor(\tilde{D}_t \dot{\tilde{q}})\|^2_G + \tilde{V}(\tilde{q}) \\
    &= \|\widetilde{D_t \dot{q}}\|^2_G + V(\pi \circ \tilde{q}) \\
    &= \|D_t \dot{q}\|^2_H + V(q) \\
    &= L(q, D_t \dot{q}).
\end{align*}
In other words, for any horizontal lift $g$ of $q$,
$$\int_a^b \left(\|\Hor(\tilde{D}_t \dot{g}) \|^2_G + \tilde{V}(g)\right)dt = \int_a^b \left(\|D_t \dot{q}\|^2_H + V(q)\right)dt.$$
This motivates the definition of the operator $\HorD_t X := \Hor(\tilde{D}_t X)$ for all $X \in \Gamma(g)$. It is clear that we have $\HorD_t X = \HorLC_{\dot{g}} X$, where $\HorLC: \Gamma(TG) \times \Gamma(HG) \to \Gamma(HG)$ is the connection on the horizontal bundle defined by $\HorLC_W Z = \mathcal{H}(\tilde{\nabla}_W Z)$ for all $W \in \Gamma(TG), \ Z \in \Gamma(HG)$.

\begin{theorem}\label{thm: variational_principles_homo_G}
Consider a curve $q: [a, b] \to G$ and let $g$ be a horizontal lift of $q$. Then $q$ is a modified Riemannian cubic with respect to $V$ if and only if $g$ satisfies the variational principle
\begin{equation}\label{G_variational_prob}
    \delta\int_a^b \left(\|\HorD_t \dot{g}\|^2_G + \tilde{V}(g)\right)dt = 0
\end{equation}
among all basic variations of $g$, where $\tilde{V} = V \circ \pi$.
\end{theorem}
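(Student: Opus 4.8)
The plan is to establish the equivalence by relating the variational principle \eqref{G_variational_prob} on $G$ to the variational principle on $H$ whose extrema are, by Proposition \ref{th1}, precisely the modified Riemannian cubics with respect to $V$. The starting observation is the chain of identities worked out just before the statement: for any horizontal lift $g$ of $q$ one has $\HorD_t \dot g = \Hor(\tilde D_t \dot g) = \widetilde{D_t \dot q}$ (using that $\dot g$ is the horizontal lift of $\dot q$ together with \eqref{eq: covG-to-covH}, whose symmetric part is horizontal and whose $\mathcal V$-correction drops out under $\Hor$), and hence $\|\HorD_t \dot g\|_G^2 + \tilde V(g) = \|D_t \dot q\|_H^2 + V(q)$ pointwise in $t$. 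Integrating, the functional appearing in \eqref{G_variational_prob} evaluated at $g$ equals $2J(q)$ up to the harmless factor coming from the $\tfrac12$ in \eqref{J} (or we simply note the two functionals have the same critical points). So the real content is the correspondence between \emph{variations}: I must show that basic variations of $g$ correspond bijectively, at the level of variational vector fields, to admissible (proper) variations of $q$, and that the value of the functional is matched along the whole variation, not just at the base curve.

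First I would make precise the notion of a basic variation: a variation $\Gamma(s, \cdot)$ of $g$ through horizontal lifts, i.e. each $\Gamma(s, \cdot)$ is a basic curve in $G$, so that $\pi \circ \Gamma(s, \cdot) =: \bar\Gamma(s, \cdot)$ is a variation of $q$ in $H$; and conversely, given a variation $\bar\Gamma$ of $q$ in $H$ one lifts each $\bar\Gamma(s,\cdot)$ horizontally with a consistent choice of base point to obtain a basic variation $\Gamma$ of $g$. The key point is that $\pi_\ast$ identifies the variational vector field $\partial_s \Gamma(0,t)$ — which is horizontal, being tangent to the family of horizontal curves only after projecting, so more carefully I take its horizontal part — with the variational vector field $\partial_s \bar\Gamma(0,t)$ of $q$; and by Lemma \ref{lemma: curves_in_h} (or directly by the submersion property) every admissible variational vector field along $q$ arises this way. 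Because $\pi$ is a Riemannian submersion and the integrand of \eqref{G_variational_prob} at parameter $s$ equals the integrand of $J$ at $\bar\Gamma(s,\cdot)$ by the same pointwise identity applied to the curve $\Gamma(s,\cdot)$, we get
\begin{equation*}
\frac{d}{ds}\Big|_{s=0}\int_a^b \left(\|\HorD_t \dot g\|^2_G + \tilde V(g)\right)dt \;=\; 2\,\frac{d}{ds}\Big|_{s=0} J(\bar\Gamma(s,\cdot)).
\end{equation*}
Hence the left side vanishes for all basic variations iff the right side vanishes for all admissible proper variations of $q$, which by Proposition \ref{th1} happens iff $q$ is a modified Riemannian cubic with respect to $V$.

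I expect the main obstacle to be the bookkeeping on variations rather than anything analytic: one must check that a basic variation genuinely produces, on projection, an \emph{admissible} (proper, with the right boundary behavior on $q$ and $D_t\dot q$) variation of $q$, and conversely that every admissible variation of $q$ lifts to a basic variation with matching endpoint data — this is where the boundary conditions on $\Omega$ and on $T_q\Omega$ (namely $X(a)=X(b)=D_tX(a)=D_tX(b)=0$) must be tracked through $\pi_\ast$ and the horizontal lift, using that $\pi_\ast$ is an isometry on horizontal subspaces and commutes appropriately with the covariant derivatives via Lemma \ref{lemma: cov_G_to_H}. A secondary subtlety is that the variational vector field of a basic variation need not itself be horizontal, only its projection matters; but since the integrand depends on $\dot g$ only through $\Hor(\dot g) = \dot g$ (as $g$ is a horizontal curve) and on $g$ only through $\pi(g)$, the vertical part of the variation contributes nothing to first order — this should be verified by a short computation using \eqref{cov_G_commute} and the fact that $\tilde V = V \circ \pi$ is constant along fibers. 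Once these identifications are in place, the equivalence follows immediately from Proposition \ref{th1}.
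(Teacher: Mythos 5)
Your proposal follows essentially the same route as the paper's proof: the pointwise identity $\|\HorD_t \dot g\|^2_G + \tilde V(g) = \|D_t\dot q\|^2_H + V(q)$ along horizontal lifts, the correspondence between basic variations of $g$ and admissible proper variations of $q$, and the characterization of modified cubics as critical points of the functional $J$ via Proposition \ref{th1}. The paper states this more tersely (simply asserting the variation correspondence that you verify in more detail), and the factor-of-$\tfrac12$ bookkeeping you flag between \eqref{G_variational_prob} and \eqref{J} is treated no more carefully in the paper itself, so your proposal is correct and matches the paper's argument.
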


\begin{proof}
It is clear that the projection of any basic variation of $g$ is an admissible proper variation of $q$. On the other hand, any admissible proper variation of $q$ is clearly the projection of some basic variation of $g$. Suppose that $q$ is a modified cubic polynomial with respect to $V$ and let $g_s$ is a basic variation of $g$. Then $q_s := \pi \circ g_s$ is an admissible proper variation of $q$, and thus
$$\frac{d}{ds}\Big{|}_{s = 0}\int_a^b \left(\|\HorD_t \dot{g}_s\|^2_G + \tilde{V}(g_s)\right)dt = \frac{d}{ds}\Big{|}_{s = 0}\int_a^b \left(\|D_t \dot{q}_s\|^2_H + V(q_s) \right)dt = 0.$$
Hence \eqref{G_variational_prob} is satisfied among all basic variations of $g$. Now suppose that $g$ solves \eqref{G_variational_prob} among all basic variations. Then for any admissible proper variation $q_s$ of $q$, we have
$$\frac{d}{ds}\Big{|}_{s = 0}\int_a^b \left(\|D_t \dot{q}_s\|^2_H + V(q_s) \right)dt = \frac{d}{ds}\Big{|}_{s = 0}\int_a^b \left(\|\HorD_t \dot{\tilde{q}}_s\|^2_G + \tilde{V}(\tilde{q}_s)\right)dt = 0,$$
so that $q$ is a modified Riemannian cubic with respect to $V$.
\end{proof}

The remainder of the section is dedicated to deriving necessary conditions for the variational principle \eqref{G_variational_prob}, and then using the symmetry of $G$ and partial symmetry of $V$ to reduce them to some set of equations on $\mathfrak{h}$. Before this, we must study the properties of the horizontal connection. Observe that while $\HorLC$ is only a connection in the strict sense on the domain $\Gamma(TG) \times \Gamma(HG)$, it is still a well-defined operation on the larger domain $\Gamma(TG) \times \Gamma(TG)$. Here we outline some additional properties satisfied by $\HorLC$ and basic vector fields:

\begin{lemma}\label{lemma: Hor_G_to_H}
Let $\tilde{X}, \tilde{Y} \in \mathcal{B}(G)$, and  $X, Y \in \Gamma(TH)$ be the unique horizontal vector fields which are $\pi$-related to $\tilde{X}, \tilde{Y}$, respectively. Further suppose that $W, Z \in \Gamma(HG)$ and $P \in \Gamma(TG)$. Then, the following identities hold:
\begin{align}
    \HorLC_{\tilde{X}} \tilde{Y} &= \widetilde{\nabla_X Y}\label{nabla_lift} \\
    \Hor([\tilde{X}, \tilde{Y}]) &= \widetilde{[X, Y]}\label{Lie_lift} \\
    P\left< W, Z\right>_G &= \left<\HorLC_P W, Z \right>_G + \left< W, \HorLC_{P}Z \right>_G\label{compatibility} \\
    \HorLC_{W} Z - \HorLC_{Z} W &= \Hor([W, Z])\label{torsion}
\end{align}
\end{lemma}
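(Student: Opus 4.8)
The plan is to establish each of the four identities \eqref{nabla_lift}--\eqref{torsion} by reducing them to facts already available in the excerpt, chiefly Lemma \ref{lemma: cov_G_to_H}, the torsion-freeness and metric-compatibility of $\tilde{\nabla}$, and the naturality of horizontal lifts under $\pi$.

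First I would prove \eqref{nabla_lift}. By definition $\HorLC_{\tilde X}\tilde Y=\mathcal H(\tilde\nabla_{\tilde X}\tilde Y)$, and by \eqref{eq: covG-to-covH} we have $\tilde\nabla_{\tilde X}\tilde Y=\widetilde{\nabla_X Y}+\tfrac12\mathcal V([\tilde X,\tilde Y])$. The second term is vertical, so applying $\mathcal H$ annihilates it and leaves $\widetilde{\nabla_X Y}$, which is horizontal. For \eqref{Lie_lift}, I would combine \eqref{eq: covG-to-covH} with the torsion-free identity $[\tilde X,\tilde Y]=\tilde\nabla_{\tilde X}\tilde Y-\tilde\nabla_{\tilde Y}\tilde X$: subtracting the two instances of \eqref{eq: covG-to-covH} gives $[\tilde X,\tilde Y]=\widetilde{\nabla_X Y}-\widetilde{\nabla_Y X}+\tfrac12\mathcal V([\tilde X,\tilde Y])-\tfrac12\mathcal V([\tilde Y,\tilde X])=\widetilde{[X,Y]}+\mathcal V([\tilde X,\tilde Y])$, using that $\nabla$ is torsion-free on $H$ and that $\mathcal V$ is linear. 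Since $\widetilde{[X,Y]}$ is horizontal, applying $\mathcal H$ to both sides yields $\Hor([\tilde X,\tilde Y])=\widetilde{[X,Y]}$. (This also re-derives that $\mathcal V([\tilde X,\tilde Y])$ accounts for the entire vertical part, consistent with \eqref{eq: covG-to-covH}.)

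For \eqref{compatibility}, I would start from metric-compatibility of $\tilde\nabla$: for $W,Z\in\Gamma(HG)$ and $P\in\Gamma(TG)$, $P\left<W,Z\right>_G=\left<\tilde\nabla_P W,Z\right>_G+\left<W,\tilde\nabla_P Z\right>_G$. Now decompose $\tilde\nabla_P W=\mathcal H(\tilde\nabla_P W)+\mathcal V(\tilde\nabla_P W)=\HorLC_P W+\mathcal V(\tilde\nabla_P W)$, and similarly for $Z$. Since $Z$ is horizontal and $\mathcal V(\tilde\nabla_P W)$ is vertical, $\left<\mathcal V(\tilde\nabla_P W),Z\right>_G=0$, and likewise $\left<W,\mathcal V(\tilde\nabla_P Z)\right>_G=0$ because the splitting $T_gG=V_g\oplus\Hor_g$ is orthogonal by definition of $\Hor$. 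This leaves exactly $\left<\HorLC_P W,Z\right>_G+\left<W,\HorLC_P Z\right>_G$. Finally \eqref{torsion}: again using torsion-freeness of $\tilde\nabla$, $\tilde\nabla_W Z-\tilde\nabla_Z W=[W,Z]$ for $W,Z\in\Gamma(HG)\subset\Gamma(TG)$; applying $\mathcal H$ to both sides and using $\mathcal H\circ\tilde\nabla_W=\HorLC_W$ on horizontal sections gives $\HorLC_W Z-\HorLC_Z W=\Hor([W,Z])$.

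The only genuine subtlety — and the step I would be most careful about — is \eqref{Lie_lift}, specifically confirming that $[\tilde X,\tilde Y]$ is $\pi$-related to $[X,Y]$ so that its horizontal part is precisely the horizontal lift $\widetilde{[X,Y]}$; this is the standard fact that $\pi$-relatedness is preserved under Lie bracket, and it is exactly what makes $\mathcal B(G)$ closed under the projected bracket. Everything else is a routine bookkeeping exercise in splitting $TG=VG\oplus HG$ orthogonally and invoking the structural properties of $\tilde\nabla$ already recorded in Lemma \ref{lemma: cov_G_to_H}. One should also note, for the domains to make sense, that in \eqref{compatibility} and \eqref{torsion} the inputs are allowed to be arbitrary horizontal (not necessarily basic) sections, which is fine since $\HorLC$ was defined on all of $\Gamma(TG)\times\Gamma(HG)$.
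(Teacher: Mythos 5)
Your proposal is correct, and for \eqref{nabla_lift}, \eqref{compatibility}, and \eqref{torsion} it is essentially identical to the paper's proof: project Lemma \ref{lemma: cov_G_to_H} with $\mathcal H$, use metric compatibility of $\tilde\nabla$ plus orthogonality of the vertical/horizontal splitting, and use torsion-freeness, respectively. The one place you diverge is \eqref{Lie_lift}: the paper argues directly that basic fields push forward under $\pi$, so $\pi_\ast[\tilde X,\tilde Y]=[\pi_\ast\tilde X,\pi_\ast\tilde Y]=[X,Y]$, and the identity follows at once; you instead subtract two instances of \eqref{eq: covG-to-covH}, use torsion-freeness of both $\tilde\nabla$ and $\nabla$, and obtain $[\tilde X,\tilde Y]=\widetilde{[X,Y]}+\mathcal V([\tilde X,\tilde Y])$ before applying $\mathcal H$. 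Both are valid; the paper's route is shorter and purely Lie-theoretic (it never touches the connection, and it is exactly the "bracket preserves $\pi$-relatedness" fact you flag as the subtle point), while your route has the minor virtue of extracting \eqref{Lie_lift} as a formal consequence of the already-recorded formula \eqref{eq: covG-to-covH}, at the cost of leaning on a statement whose own proof in the cited source ultimately rests on that same naturality of the bracket. No gap either way.
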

\begin{proof}
Observe that \eqref{nabla_lift} follows immediately from the definition of $\HorLC$ and Lemma \ref{lemma: cov_G_to_H}. \eqref{Lie_lift} follows from the fact that basic vector fields can be pushed forward by $\pi$, hence $\pi_\ast [\tilde{X}, \tilde{Y}] = [\pi_\ast \tilde{X}, \pi_\ast \tilde{Y}] = [X, Y]$, from which the conclusion immediately follows. To see equation \eqref{compatibility}, note that the metric compatibility of $\tilde{\nabla}$ implies that $P\left< W, Z\right>_G = \left<\tilde{\nabla}_P W, Z \right>_G + \left< W, \tilde{\nabla}_{P}Z \right>_G$. Now since horizontal and vertical vectors are orthogonal with respect to $\left< \cdot, \cdot \right>_G$, it follows that $\left<\tilde{\nabla}_P W, Z \right>_G + \left< W, \tilde{\nabla}_{P}Z \right>_G = \left<\Hor(\tilde{\nabla}_P W), Z \right>_G + \left< W, \Hor(\tilde{\nabla}_{P}Z) \right>_G$, from which the conclusion follows. Finally, \eqref{torsion} follows from the fact that $\tilde{\nabla}$ is torsion-free.
\end{proof}

We now define the type $(1, 2)$-tensor field $A: \mathfrak{X}(G) \times \mathfrak{X}(G) \to \mathfrak{X}(G)$ by 
\begin{equation}\label{A_tensor}
    A_X Y = \mathcal{H}\left(\tilde{\nabla}_{\mathcal{H}(X)} \mathcal{V}(Y)\right) + \mathcal{V}\left(\tilde{\nabla}_{\mathcal{H}(X)} \mathcal{H}(Y)\right),
\end{equation}

The following lemma follows immediately from \cite{Oneill}.
\begin{lemma}
Let $X, Y \in B(G)$ and $V \in \Gamma(VG)$. Then,
\begin{align}
    A_X Y &= \frac12 \mathcal{V}([X, Y])\label{A_X Y} \\
    A_X V &= \HorLC_X V \label{A_X V}
\end{align}
\end{lemma}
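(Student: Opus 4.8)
The plan is to verify the two identities separately: \eqref{A_X V} is essentially immediate from the definition, while \eqref{A_X Y} is O'Neill's antisymmetry property of the tensor $A$ restricted to basic fields.

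For \eqref{A_X V}, since $X \in \mathcal{B}(G) \subset \Gamma(HG)$ we have $\mathcal{H}(X) = X$, and since $V \in \Gamma(VG)$ we have $\mathcal{V}(V) = V$ and $\mathcal{H}(V) = 0$. Substituting these into the defining formula \eqref{A_tensor} annihilates the second summand and collapses the first to $A_X V = \mathcal{H}(\tilde{\nabla}_X V)$, which is exactly $\HorLC_X V$ by the definition of the horizontal connection, extended to the larger domain $\Gamma(TG)\times\Gamma(TG)$ as discussed above. No further computation is required.

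For \eqref{A_X Y}, using $\mathcal{H}(X) = X$, $\mathcal{H}(Y) = Y$, $\mathcal{V}(Y) = 0$, the formula \eqref{A_tensor} reduces to $A_X Y = \mathcal{V}(\tilde{\nabla}_X Y)$. Writing $\tilde{\nabla}_X Y = \frac12(\tilde{\nabla}_X Y + \tilde{\nabla}_Y X) + \frac12(\tilde{\nabla}_X Y - \tilde{\nabla}_Y X)$ and using that $\tilde{\nabla}$ is torsion-free, the antisymmetric part equals $\frac12[X,Y]$, so $A_X Y = \frac12\mathcal{V}([X,Y]) + \frac12\mathcal{V}(\tilde{\nabla}_X Y + \tilde{\nabla}_Y X)$. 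Thus it remains to show that the symmetric part carries no vertical component, i.e.\ $\left<\tilde{\nabla}_X Y + \tilde{\nabla}_Y X, W\right>_G = 0$ for every $W \in \Gamma(VG)$. This is the one genuine step. I would argue: since $\left<Y, W\right>_G = 0$, metric compatibility of $\tilde{\nabla}$ gives $\left<\tilde{\nabla}_X Y, W\right>_G = -\left<Y, \tilde{\nabla}_X W\right>_G$; since $X$ is $\pi$-related to $\pi_\ast X$ and $W$ is $\pi$-related to the zero field on $H$, the bracket $[X,W]$ is vertical, so torsion-freeness yields $\mathcal{H}(\tilde{\nabla}_X W) = \mathcal{H}(\tilde{\nabla}_W X)$ and hence $\left<Y, \tilde{\nabla}_X W\right>_G = \left<Y, \tilde{\nabla}_W X\right>_G$; finally, since $\left<X, Y\right>_G = \left<\pi_\ast X, \pi_\ast Y\right>_H\circ\pi$ is constant along fibers, $W\left<X,Y\right>_G = 0$ gives $\left<Y, \tilde{\nabla}_W X\right>_G = -\left<\tilde{\nabla}_W Y, X\right>_G$. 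Combining, $\left<\tilde{\nabla}_X Y, W\right>_G = \left<\tilde{\nabla}_W Y, X\right>_G$, and symmetrically $\left<\tilde{\nabla}_Y X, W\right>_G = \left<\tilde{\nabla}_W X, Y\right>_G$; adding and applying metric compatibility once more gives $\left<\tilde{\nabla}_X Y + \tilde{\nabla}_Y X, W\right>_G = W\left<X, Y\right>_G = 0$, which completes the proof.

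The main obstacle is exactly this last paragraph: keeping careful track of which brackets are vertical and using metric compatibility in the right direction. Both ingredients—the verticality of $[X,W]$ for basic $X$ and vertical $W$, and the fiberwise constancy of $\left<X,Y\right>_G$ for basic $X,Y$—are standard features of a Riemannian submersion (and are implicit in Lemma \ref{lemma: Hor_G_to_H}), so one may alternatively simply invoke O'Neill's fundamental lemma \cite{Oneill}, of which the present statement is a direct restatement in the notation of \eqref{A_tensor}.
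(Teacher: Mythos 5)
Your proof is correct, but it is genuinely more than what the paper does: the paper disposes of both identities with a one-line citation to O'Neill's fundamental lemma on the tensor $A$ of a Riemannian submersion, whereas you verify them from the definition \eqref{A_tensor}. Your treatment of \eqref{A_X V} (projecting the definition with $\mathcal{H}(X)=X$, $\mathcal{V}(V)=V$, $\mathcal{H}(V)=0$) is exactly the intended immediate computation. For \eqref{A_X Y}, your symmetric/antisymmetric splitting of $\tilde{\nabla}_X Y$ plus the three-step chain (metric compatibility, verticality of $[X,W]$ for basic $X$ and vertical $W$, and fiberwise constancy of $\left<X,Y\right>_G$) is a sound and complete substitute for O'Neill's standard polarization argument showing $A_X X=0$; the two are essentially equivalent in content, since $\mathcal{V}(\tilde{\nabla}_X Y+\tilde{\nabla}_Y X)=0$ is precisely the symmetrized form of $A_X X=0$. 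What your version buys is transparency about where basicness (as opposed to mere horizontality) enters — namely in the $\pi$-relatedness arguments for $[X,W]$ and for $W\left<X,Y\right>_G=0$ — although, since $A$ is tensorial, the identity \eqref{A_X Y} then extends pointwise to all horizontal fields; what the paper's citation buys is brevity and consistency with the later use of \cite{Oneill} and \cite{zhang2018left} for the curvature comparison. Either route is acceptable here.
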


In order to relate the second derivatives with respect to $\HorLC$, we now study the curvature endomorphism $Q: \Gamma(TG) \times \Gamma(TG) \times \Gamma(HG) \to \Gamma(HG)$ correspinding to $\HorLC$. Namely, we have
\begin{equation}\label{Q}
    Q(W, X)Y = \HorLC_W \HorLC_X Y -  \HorLC_X \HorLC_W Y - \HorLC_{[W, X]} Y 
\end{equation}
and similarly, the horizontal curvature tensor $\text{Qm}$ by $Qm(W, X, Y, Z) := \left<Q(W, X)Y, Z\right>_G$. Note that, due to the fact that $[W, X]$ is not in general basic even when $W, X$ are, the symmetries of $\text{Qm}$ don't immediately follow from the symmetries of $\text{Rm}$ or $\widetilde{\text{Rm}}$. In fact, if $\tilde{W}, \tilde{X}, \tilde{Y} \in B(G)$ are the horizontal lifts of $W, X, Y \in \Gamma(TH)$, then we have
\begin{align*}
    Q(\tilde{W}, \tilde{X}) \tilde{Y} &= \HorLC_{\tilde{W}} \HorLC_{\tilde{X}} {\tilde{Y}} -  \HorLC_{\tilde{X}} \HorLC_{\tilde{W}} {\tilde{Y}} - \HorLC_{[{\tilde{W}}, {\tilde{X}}]} {\tilde{Y}} \\
    &= \widetilde{\nabla_W \nabla_X Y} - \widetilde{\nabla_X \nabla_W Y} - \widetilde{\nabla_{[W, X]} Y} - \HorLC_{\mathcal{V}([\tilde{W}, \tilde{X}])} \tilde{Y} \\
    &= \widetilde{R(W, X)Y} - 2A_{\tilde{Y}} A_{\tilde{W}} \tilde{X},
\end{align*}
where we have used \eqref{A_X V} together with \eqref{cov_G_commute} in the last equality. Hence, it suffices to study the symmetries of the $4$-tensor field $\text{Am}: \Gamma(TG) \times \Gamma(TG) \times \Gamma(TG) \times \Gamma(TG) \to \R$ defined by $\text{Am}(W, X, Y, Z) = \left<A_W A_X Y, Z\right>_G$ along $B(G)$ in order to understand the symmetries of $Qm$ along $B(G)$. We also define the $(3, 1)$-tensor field $\tilde{Q}$ defined by $\tilde{Q}(\tilde{W}, \tilde{X})\tilde{Y} = Q(\tilde{W}, \tilde{X})\tilde{Y} + 2 A_{\tilde{Y}} A_{\tilde{W}}\tilde{X}$, from which it is clear that $\tilde{Q}$ maps basic vector fields to basic vector fields. In particular, $\tilde{Q}$ it is the horizontal lift of the Riemannian curvature endomorphism $R$ on $H$. The symmetries of Am are summarized in the following lemma:

\begin{lemma}\label{Am_symmetries}
Let $W, X, Y, Z \in B(G)$. Then,
\begin{align}
    Am(W, X, Y, Z) &= -Am(W, Y, X, Z)\label{Am23} \\
    Am(W, X, Y, Z) &= -Am(Z, X, Y, W)\label{Am14} \\
    Am(W, X, Y, Z) &= Am(X, W, Z, Y)\label{Am1234}
\end{align}
\end{lemma}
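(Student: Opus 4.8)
The plan is to collapse all three claimed symmetries onto the single intermediate identity
\[
\text{Am}(W,X,Y,Z) = -\langle A_X Y,\, A_W Z\rangle_G ,
\]
valid for basic $W,X,Y,Z$, after which each symmetry of $\text{Am}$ follows by a one-line algebraic manipulation. So the real content is establishing this identity, and everything else is bookkeeping.

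First I would record the ingredients. For basic (hence horizontal) $X,Y$, equation \eqref{A_X Y} gives $A_X Y = \frac12\mathcal{V}([X,Y])$, which is a \emph{vertical} vector field; in particular $A_X Y = -A_Y X$, and likewise $A_W Z$ is vertical with $A_W Z = -A_Z W$. Since $A_X Y$ is vertical and $W$ is basic, equation \eqref{A_X V} gives $A_W(A_X Y) = \HorLC_W (A_X Y) = \mathcal{H}(\tilde{\nabla}_W (A_X Y))$, which is horizontal; so the pairing defining $\text{Am}$ indeed lands in $\R$ as written.

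The crux is a skew-adjointness step: for any vertical field $U$ and any basic $Z$ I claim $\langle A_W U, Z\rangle_G = -\langle U, A_W Z\rangle_G$. To prove it, I expand the left side using \eqref{A_X V} and orthogonality of $Z$ with the vertical part, $\langle A_W U, Z\rangle_G = \langle \mathcal{H}(\tilde{\nabla}_W U), Z\rangle_G = \langle \tilde{\nabla}_W U, Z\rangle_G$, and the right side using the definition \eqref{A_tensor} and the fact that $U$ is vertical, $\langle U, A_W Z\rangle_G = \langle U, \mathcal{V}(\tilde{\nabla}_W Z)\rangle_G = \langle U, \tilde{\nabla}_W Z\rangle_G$. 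Metric compatibility of $\tilde{\nabla}$ gives $W\langle U, Z\rangle_G = \langle \tilde{\nabla}_W U, Z\rangle_G + \langle U, \tilde{\nabla}_W Z\rangle_G$, and the left-hand side vanishes because horizontal and vertical vectors are $\langle\cdot,\cdot\rangle_G$-orthogonal. Applying this with $U := A_X Y$ yields $\text{Am}(W,X,Y,Z) = \langle A_W(A_X Y), Z\rangle_G = -\langle A_X Y, A_W Z\rangle_G$.

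Finally the three identities drop out. For \eqref{Am1234} I use symmetry of the inner product together with the same skew-adjointness step applied with $W$ replaced by $X$: $\text{Am}(W,X,Y,Z) = -\langle A_X Y, A_W Z\rangle_G = -\langle A_W Z, A_X Y\rangle_G = \text{Am}(X,W,Z,Y)$. For \eqref{Am23} I use $A_X Y = -A_Y X$: $\text{Am}(W,Y,X,Z) = -\langle A_Y X, A_W Z\rangle_G = \langle A_X Y, A_W Z\rangle_G = -\text{Am}(W,X,Y,Z)$. For \eqref{Am14} I use $A_W Z = -A_Z W$ in the same way: $\text{Am}(Z,X,Y,W) = -\langle A_X Y, A_Z W\rangle_G = \langle A_X Y, A_W Z\rangle_G = -\text{Am}(W,X,Y,Z)$. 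I expect the only delicate point to be keeping careful track of which entries are horizontal and which are vertical in the skew-adjointness step; once that splitting is handled correctly, nothing else requires more than the product rule and orthogonality of $\mathcal{H}$ and $\mathcal{V}$.
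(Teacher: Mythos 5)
Your proposal is correct and follows essentially the same route as the paper: both differentiate the identity $\left<A_X Y, Z\right>_G \equiv 0$ using metric compatibility and horizontal--vertical orthogonality to get $\left<A_W A_X Y, Z\right>_G = -\left<A_X Y, A_W Z\right>_G$, and then deduce the three symmetries from this together with the antisymmetry $A_X Y = -A_Y X$ coming from \eqref{A_X Y}. The only cosmetic difference is that you package the key step as a general skew-adjointness statement and derive \eqref{Am23} through it, whereas the paper reads \eqref{Am23} off directly from \eqref{A_X Y}.
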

\begin{proof}
\eqref{Am23} follows immediately from the \eqref{A_X Y}. To see \eqref{Am14} and \eqref{Am1234}, first observe that $\left< A_X Y, Z \right>_G = 0$. By the metric compatibility of $\tilde{\nabla}$, it then follows that $0 = W\left< A_X Y, Z \right>_G = \left< \HorLC_W A_X Y, Z \right>_G + \left< A_X Y, \tilde{\nabla}_W Z \right>_G$. It is clear from \eqref{A_X Y} that $A_X Y$ is vertical, hence \eqref{A_X V} implies that $\HorLC_W A_X Y = A_W A_X Y$. Moreover, \eqref{A_tensor} shows that $\left< A_X Y, \tilde{\nabla}_W Z \right>_G = \left< A_X Y, A_W Z \right>_G$. Hence, $\left<A_W A_X Y, Z\right>_G = -\left<A_X Y, A_W Z\right>_G$. \eqref{Am14} now follows as:
\begin{align*}
    Am(W, X, Y, Z) &= -\left<A_X Y, A_W Z\right>_G \\
    &= \left<A_X Y, A_Z W \right>_G \\
    &= -\left<A_Z A_X Y, W \right>_G \\
    &= -Am(Z, X, Y, W).
\end{align*}
Similarly, \eqref{Am1234} can be seen from:
\begin{align*}
    Am(W, X, Y, Z) &= -\left<A_X Y, A_W Z\right>_G \\
    &= -\left<A_W Z, A_X Y\right>_G \\
    &= \left<A_X A_W Z, Y\right>_G \\
    &= Am(X, W, Z, Y).
\end{align*}
\end{proof}

We are now in a position to study the symmetry relations of Qm on $B(G)$. As we will see, despite the dependence on $Am$, most of the symmetries of $Rm$ will be preserved. The only exception is that Qm$(W, X, Y, Z) + \text{Qm}(X, Y, W, Z) + \text{Qm}(Y, W, X, Z)$ in general fails to vanish. 
\begin{lemma}\label{Qm_symmetries}
For all $W, X, Y, Z \in B(G)$, the following relations hold:
\begin{align}
    \text{Qm}(W, X, Y, Z) &= -\text{Qm}(X, W, Y, Z) \label{Qm12} \\
    \text{Qm}(W, X, Y, Z) &= -\text{Qm}(W, X, Z, Y) \label{Qm34}\\
    \text{Qm}(W, X, Y, Z) &= \text{Qm}(Y, Z, W, X) \label{Qm1324} 
\end{align}
\end{lemma}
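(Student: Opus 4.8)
The plan is to derive everything from the single computation carried out just before Lemma \ref{Am_symmetries}. Pairing the identity $Q(\tilde W, \tilde X)\tilde Y = \widetilde{R(W, X)Y} - 2 A_{\tilde Y} A_{\tilde W} \tilde X$ with a fourth basic field $\tilde Z$, and using that $\pi$ is a Riemannian submersion (so that $\left<\tilde U, \tilde V\right>_G = \left<U, V\right>_H$ for basic fields and hence $\left<\widetilde{R(W,X)Y}, \tilde Z\right>_G = \text{Rm}(W,X,Y,Z)$, the curvature $4$-tensor of $H$), one obtains the master formula
\[
    \text{Qm}(W, X, Y, Z) = \text{Rm}(W, X, Y, Z) - 2\,\text{Am}(Y, W, X, Z),
\]
with $\text{Am}$ as in Lemma \ref{Am_symmetries}. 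The one point to watch is the cyclic shift: the vector field sitting in slot $3$ of $\text{Qm}$ lands in slot $1$ of the $\text{Am}$ summand.

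With the master formula in hand, each of \eqref{Qm12}, \eqref{Qm34}, \eqref{Qm1324} follows by checking that the $\text{Rm}$ summand and the $\text{Am}$ summand transform with the \emph{same} sign under the relevant permutation, so that $\text{Qm}$ does too. For \eqref{Qm12}, combine antisymmetry of $\text{Rm}$ in its first two slots (valid since $\nabla$ on $H$ is Levi-Civita) with \eqref{Am23} applied under $(W,X,Y,Z)\mapsto(Y,W,X,Z)$, giving $\text{Am}(Y,W,X,Z) = -\text{Am}(Y,X,W,Z)$. For \eqref{Qm34}, combine antisymmetry of $\text{Rm}$ in its last two slots with \eqref{Am14} applied under the same substitution, giving $\text{Am}(Y,W,X,Z) = -\text{Am}(Z,W,X,Y)$. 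For \eqref{Qm1324}, combine the pair-interchange symmetry $\text{Rm}(W,X,Y,Z) = \text{Rm}(Y,Z,W,X)$ with \eqref{Am1234} applied under the same substitution, giving $\text{Am}(Y,W,X,Z) = \text{Am}(W,Y,Z,X)$. In each case substituting back into the master formula yields the claim in one line.

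The only genuinely delicate aspect is the slot bookkeeping forced by that cyclic shift: one must confirm that the three symmetries of $\text{Am}$ recorded in Lemma \ref{Am_symmetries} are exactly the ones that mirror the three standard symmetries of $\text{Rm}$ after shifting. No term requires expanding $\tilde\nabla$ or $\HorLC$ explicitly, so beyond this the argument is routine. As a sanity check on scope, note that the first Bianchi identity is \emph{not} inherited by $\text{Qm}$ for exactly this reason—the shifted $\text{Am}$ term does not cyclically sum to zero—which is consistent with the remark preceding the lemma, and is why only the three symmetries above are asserted.
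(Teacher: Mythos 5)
Your proposal is correct and follows essentially the same route as the paper: both expand $\text{Qm}$ as the curvature tensor of $H$ plus an $\text{Am}$ correction (your $-2\,\text{Am}(Y,W,X,Z)$ equals the paper's $+2\,\text{Am}(Y,X,W,Z)$ via \eqref{Am23}) and then transfer the standard symmetries of $\text{Rm}$ together with \eqref{Am23}--\eqref{Am1234}. The only cosmetic difference is that the paper obtains \eqref{Qm12} directly from the definition of $Q$ rather than from the master formula, and your slot bookkeeping checks out in all three cases.
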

\begin{proof}
\eqref{Qm12} follows immediately from the definition of Qm. Now let $\bar{W} := \pi_\ast W$ and similarly for $\bar{X}$ and $\bar{Y}$. Then, we have:
\begin{align*}
    \text{Qm}(W, X, Y, Z) &= \widetilde{\text{Rm}(\bar{W}, \bar{X}, \bar{Y}, \bar{Z})} + 2\text{Am}(Y, X, W, Z) \\
    \text{Qm}(W, X, Z, Y) &= \widetilde{\text{Rm}(\bar{W}, \bar{X}, \bar{Z}, \bar{Y})} + 2\text{Am}(Z, X, W, Y) \\
    &= -\widetilde{\text{Rm}(\bar{W}, \bar{X}, \bar{Y}, \bar{Z})} - 2\text{Am}(Y, X, W, Z)
\end{align*}
By using the symmetries of Rm and \eqref{Am14}. Adding the two equations yields \eqref{Qm34}. Similarly,
\begin{align*}
    \text{Qm}(W, X, Y, Z) &= \widetilde{\text{Rm}(\bar{W}, \bar{X}, \bar{Y}, \bar{Z})} + 2\text{Am}(Y, X, W, Z) \\
    \text{Qm}(Y, Z, W, X) &= \widetilde{\text{Rm}(\bar{Y}, \bar{Z}, \bar{W}, \bar{X})} + 2\text{Am}(Z, X, W, Y) \\
    &= \widetilde{\text{Rm}(\bar{W}, \bar{X}, \bar{Y}, \bar{Z})} + 2\text{Am}(Y, X, W, Z),
\end{align*}
Since applying \eqref{Am23}-\eqref{Am1234} each one time to Am$(Z, X, W, Y)$ yields Am$(Y, X, W, Z)$. Subtracting these equations from each other yields \eqref{Qm1324}.
\end{proof}

We now return to vector fields along curves. Since $A$ is tensorial, we may also evaluate it along a vector field along some curve $g: [a, b] \to G$ by defining $A_t X = A_{\dot{g}} X$ for all $X \in \Gamma(g)$. Moreover, if $\xi = g^{-1} \dot{g}$ and $\eta = g^{-1} X$, then we have $A_t X = g A_{\xi} \eta$. Note, however, that the right-hand side of \eqref{A_tensor} is only well-defined in the case that $\dot{g}$ is horizontal, at which point it is clear that $A_t X = \HorD_t \mathcal{V}(X) + \mathcal{V}(\tilde{D}_t \Hor(X))$. To evaluate this quantity when $g$ is not horizontal, we need only consider for each $\tau \in [a, b]$ any vector fields $Z_{\tau}, Y_{\tau} \in \Gamma(TG)$ such that $Z_{\tau}(g(\tau)) = \dot{g}(\tau)$ and $Y_{\tau} = X(\tau)$, and then evaluate $(A_t X)(\tau) = (A_{Z_{\tau}} Y_{\tau})(g(\tau))$.

We now seek to derive the necessary conditions corresponding to \eqref{G_variational_prob}. We expect to obtain equations resembling \eqref{eqq1}, since the properties of $\HorD$ and $Q$ mirror those of $D_t$ and $R$. Before this, we must calculate the commutativity of covariant derivatives along basic variations.

\begin{lemma} Let $g: [a, b] \to G$ be a basic curve and $g_s$ be a basic variation of $g$. Define $T = \partial_t g_s$ and $S = \partial_s g_s$. Then,
\begin{equation}
    \HorD_s \HorD_t T - \HorD_t \HorD_s T = \tilde{Q}(S, T)T.
\end{equation}
\end{lemma}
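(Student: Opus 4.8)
The plan is to mimic the standard proof of the symmetry lemma $D_s D_t T = D_t D_s T + R(S,T)T$ for the Levi-Civita connection, but carried out for the horizontal connection $\HorLC$ and its curvature $Q$, keeping careful track of the fact that $\HorLC$ has torsion in the sense of \eqref{torsion} and that $\dot g_s$ need not be horizontal once we differentiate in $s$ — although since $g_s$ is a \emph{basic} variation, $T = \partial_t g_s$ \emph{is} horizontal for each fixed $s$. The key structural inputs are: (i) $\HorD_t = \HorLC_{\partial_t g_s}$ and $\HorD_s = \HorLC_{\partial_s g_s}$ when acting on horizontal vector fields along the variation; (ii) the ``symmetry of the variation'' identity $\tilde\nabla_S T = \tilde\nabla_T S$, which holds because $\tilde\nabla$ is torsion-free and $[S,T] = 0$ as a bracket of coordinate vector fields on the parameter rectangle; and (iii) the relation $Q(\tilde W,\tilde X)\tilde Y = \tilde Q(\tilde W,\tilde X)\tilde Y - 2 A_{\tilde Y} A_{\tilde W}\tilde X$ derived in the text just before Lemma \ref{Am_symmetries}, together with $\tilde Q$ being the horizontal lift of the Riemannian curvature $R$ on $H$.

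First I would work downstairs on $H$: set $q_s := \pi\circ g_s$, which is an admissible variation of $q := \pi\circ g$, with $\bar T = \partial_t q_s$ and $\bar S = \partial_s q_s$. By the classical symmetry lemma for the Levi-Civita connection $\nabla$ on $H$ we have $D_s D_t \bar T - D_t D_s \bar T = R(\bar S,\bar T)\bar T$. Now I would lift this identity to $G$ along $g_s$: since $T$ is horizontal, $T = \widetilde{\bar T}$ (the horizontal lift of $\bar T$) along the horizontal lift $g_s$, and similarly one must relate $S = \partial_s g_s$ to $\bar S$. The point is that $\HorD_t T = \Hor(\tilde D_t T)$ projects under $\pi_\ast$ to $D_t \bar T$, and by Lemma \ref{lemma: cov_G_to_H} (more precisely \eqref{nabla_lift} in Lemma \ref{lemma: Hor_G_to_H}) the horizontal lift of $D_t \bar T$ is exactly $\HorD_t T$ when the base vector fields are basic. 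Iterating, $\HorD_s \HorD_t T$ projects to $D_s D_t \bar T$; so the horizontal parts of both sides of the claimed identity already match by the downstairs symmetry lemma plus the identification of $\tilde Q$ with the lift of $R$. What remains is to account for the \emph{difference} between $\HorD_s\HorD_t T - \HorD_t\HorD_s T$ and its horizontal-lift-of-$R(\bar S,\bar T)\bar T$ part, which is precisely the $A$-tensor correction: expanding $\HorD_s\HorD_t T = \mathcal H(\tilde\nabla_S \mathcal H(\tilde\nabla_T T))$ and using \eqref{A_tensor}, \eqref{A_X Y}, \eqref{A_X V}, and the torsion-free symmetry $\tilde\nabla_S T = \tilde\nabla_T S$, the vertical intermediate terms contribute exactly $-2A_S A_T T$ — wait, with the correct sign, exactly the $2 A_{\tilde Y}A_{\tilde W}\tilde X$ term appearing in the formula for $Q$, assembling the full $\tilde Q(S,T)T$.

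Concretely I would carry out the following steps in order. Step 1: record $\HorD_t T = \mathcal H(\tilde D_t T) = \mathcal H(\tilde\nabla_T T)$ and its $s$-derivative $\HorD_s\HorD_t T = \mathcal H(\tilde\nabla_S \mathcal H(\tilde\nabla_T T))$, and write the analogous expression with $s,t$ swapped. Step 2: split $\tilde\nabla_T T$ into horizontal and vertical parts; the horizontal part $\mathcal H(\tilde\nabla_T T) = \HorD_t T$ is basic, so $\mathcal H(\tilde\nabla_S \mathcal H(\tilde\nabla_T T)) = \HorLC_S \HorD_t T$, while the vertical correction $\mathcal V(\tilde\nabla_T T) = A_T T$ contributes $\mathcal H(\tilde\nabla_S A_T T) = A_S A_T T$ by \eqref{A_X V}. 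Step 3: subtract the $t\leftrightarrow s$ version, obtaining $\HorLC_S \HorD_t T - \HorLC_T \HorD_s T + A_S A_T T - A_T A_S T$; the first two terms combine, using $\HorD_t T = \HorLC_T T$, $\HorD_s T = \HorLC_S T$, the torsion relation \eqref{torsion}, the symmetry $\tilde\nabla_S T = \tilde\nabla_T S$ (hence $\Hor([S,T]) = 0$), and the definition \eqref{Q} of $Q$, into $Q(S,T)T$. Step 4: assemble $Q(S,T)T + 2 A_S A_T T = \tilde Q(S,T)T$ using the defining relation of $\tilde Q$, taking care with the sign and the ordering of the $A$-arguments — here I'd double check against the displayed identity $\tilde Q(\tilde W,\tilde X)\tilde Y = Q(\tilde W,\tilde X)\tilde Y + 2A_{\tilde Y}A_{\tilde W}\tilde X$ and the antisymmetry \eqref{Am23}, \eqref{Am1234} to see that $A_S A_T T - A_T A_S T$ is indeed $2 A_{\tilde Y}A_{\tilde W}\tilde X$-type. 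The main obstacle I anticipate is Step 3/Step 4 bookkeeping: making sure $\HorLC$ is legitimately applied (its arguments must be genuinely horizontal, which is guaranteed along the basic variation but needs to be invoked explicitly), that the torsion term $\Hor([S,T])$ genuinely vanishes because $S,T$ come from a variation (not because they are basic), and that the sign and index pattern of the $A A$-terms match the curvature correction exactly rather than up to a sign or a permutation — this is the kind of place where the nonstandard (torsionful, non-tensorial-off-$\Gamma(HG)$) nature of $\HorLC$ can introduce an error.
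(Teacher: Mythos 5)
Your opening route is, in substance, the paper's proof, and it is already complete: for a basic variation both $T$ and $S$ are horizontal lifts of $\bar T=\partial_t q_s$ and $\bar S=\partial_s q_s$, so each of $\HorD_s\HorD_t T$ and $\HorD_t\HorD_s T$ is itself the horizontal lift of $D_sD_t\bar T$, resp.\ $D_tD_s\bar T$, and the identity is just the lift of the classical symmetry lemma on $H$, using that $\tilde Q(S,T)T=\widetilde{R(\bar S,\bar T)\bar T}$ by the very definition of $\tilde Q$. Since both sides of the claimed identity take values in the horizontal bundle, ``the horizontal parts match'' is the whole statement; there is no residual vertical discrepancy left to account for, and the paper stops exactly at this point. (Your worry about $S$ failing to be horizontal is legitimate for an arbitrary variation through horizontal curves, but it is resolved by the paper's notion of basic variation, not by $A$-tensor corrections; if $S$ had a vertical part the lemma itself would acquire extra terms.)

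The genuine gap is in your Steps 1--4, which do not do what you intend. First, $\HorD_s\HorD_t T$ is by definition $\mathcal H\bigl(\tilde D_s\,\mathcal H(\tilde D_t T)\bigr)$, not $\mathcal H(\tilde D_s\tilde D_t T)$, so the terms $A_SA_TT$ and $A_TA_ST$ you introduce are not part of the quantity being computed; moreover $A_TT=0$ (antisymmetry of $A$ on horizontal vectors, by tensoriality and \eqref{A_X Y}), so the correction you expect from $\mathcal V(\tilde\nabla_TT)$ vanishes identically, and the combination you assemble, $Q(S,T)T+2A_SA_TT$, does not match the defining relation $\tilde Q(S,T)T=Q(S,T)T+2A_TA_ST$. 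Second, and more seriously, a correct direct computation --- the standard commutator identity for the pullback of the connection $\HorLC$ on $HG$ along the variation, with $[\partial_s,\partial_t]=0$ --- produces $Q(S,T)T$, not $\tilde Q(S,T)T$. To upgrade $Q$ to $\tilde Q$ on this route you need the nontrivial fact that $A_ST\equiv 0$ along a basic variation: $S$ and $T$ are commuting coordinate fields of a map into $G$ that are both horizontal, so (at immersion points, by restricting horizontal extensions to the image surface, and trivially elsewhere) the vertical part of the relevant bracket vanishes, whence $A_ST=0$ by tensoriality and \eqref{A_X Y}, and then $2A_TA_ST=0$. This fact appears nowhere in your plan, and the $A\,A$-terms you generate cannot substitute for it; as written, Steps 2--4 would leave you with $Q(S,T)T$ and an unbridged gap to the stated conclusion.
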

\begin{proof}
Since $g$ and $g_s$ are basic, there exists a curve $q: [a, b] \to H$ and a corresponding proper variation $q_s$ such that $g, g_s$ are horizontal lifts of $q, q_s$, respectively. If we let $\bar{T} = \partial_t q_s$ and $\bar{S} = \partial_s q_s$, then it is clear that $T, S$ are the horizontal lifts of $\bar{T}, \bar{S}$, respectively. Hence,
\begin{align*}
    \HorD_s \HorD_t T - \HorD_t \HorD_s T &= \widetilde{D_s D_t \bar{T}} - \widetilde{D_t D_s \bar{T}} \\
    &= \widetilde{R(\bar{S}, \bar{T})\bar{T}} \\
    &= \tilde{Q}(S, T)T
\end{align*}
\end{proof}
We now derive the necessary conditions for optimality in the variational principle \eqref{G_variational_prob}.
\begin{proposition}\label{thm: necessary_homo_G}
For any smooth artificial potential $\tilde{V}: G \to \R$, a basic curve $g: [a, b] \to G$ satisfies the variational principle \eqref{G_variational_prob} if and only if it is smooth and satisfies
\begin{equation}\label{lift_cubic}
    \left(\HorD_t\right)^3 \dot{g} + \tilde{Q}\left(\HorD_t \dot{g}, \dot{g}\right)\dot{g} + \Hor(\grad \tilde{V}(g))= 0
\end{equation}
on the full interval $[a, b].$
\end{proposition}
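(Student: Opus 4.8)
The plan is a first-variation computation for the functional in \eqref{G_variational_prob}, carried out with the horizontal connection $\HorLC$ (equivalently with the operator $\HorD_t$) and its curvature $\tilde{Q}$ playing the roles that the Levi-Civita connection and its curvature play in deriving \eqref{eqq1}. Granting the regularity statement for the moment, the ``if'' direction is then immediate: substituting \eqref{lift_cubic} into the first-variation formula obtained below makes it vanish on every admissible basic variation. So the substance is the ``only if'' direction, together with the upgrade of a critical $g$ from ``$C^1$ and piecewise smooth'' to ``$C^\infty$''.

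For the first variation, let $g_s$ be a basic variation of $g$ with variation field $S$ and put $T := \partial_t g_s$; since $g_s$ is basic, $T$ is horizontal, and $g_s$ projects to an admissible proper variation $q_s$ of $q := \pi \circ g$, so $\bar S := \pi_\ast S$ lies in $T_q\Omega$, while positions and velocities at $a,b$ are fixed in $s$. For the kinetic summand, metric compatibility (the along-curve form of \eqref{compatibility}) gives $\partial_s \|\HorD_t T\|^2_G = 2\langle \HorD_s \HorD_t T,\, \HorD_t T\rangle_G$; the commutation identity of the preceding lemma converts $\HorD_s \HorD_t T$ into $\HorD_t \HorD_s T + \tilde{Q}(S,T)T$; and $\HorD_s T = \HorD_t S$ by applying $\Hor$ to $\tilde{D}_s \partial_t g_s = \tilde{D}_t \partial_s g_s$. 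Evaluating at $s=0$ and integrating by parts twice in $t$ — the boundary terms dropping out because endpoint positions and velocities are fixed in $s$ — the kinetic part becomes $\int_a^b \langle (\HorD_t)^3 \dot g + \tilde{Q}(S,\dot g)\dot g,\, S\rangle_G\, dt$ once the curvature term is rearranged. That rearrangement uses that $\tilde{Q}$ is the horizontal lift of the Riemannian curvature endomorphism $R$ on $H$ — precisely the computation preceding Lemma \ref{Qm_symmetries}, where the $A$-correction terms cancel by \eqref{Am23} — so $\langle \tilde{Q}(\cdot,\cdot)\cdot,\cdot\rangle_G$ inherits the full symmetries of $\text{Rm}$ on $H$, whence $\langle \tilde{Q}(S,\dot g)\dot g,\, \HorD_t \dot g\rangle_G = \langle \tilde{Q}(\HorD_t \dot g,\dot g)\dot g,\, S\rangle_G$. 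Finally $\partial_s \tilde{V}(g_s)|_{s=0} = \langle \grad \tilde{V}(g),\, S\rangle_G$. Collecting terms, the first variation of \eqref{G_variational_prob} equals, up to a positive constant, $\int_a^b \langle E,\, S\rangle_G\, dt$ with $E := (\HorD_t)^3 \dot g + \tilde{Q}(\HorD_t \dot g,\dot g)\dot g + \Hor(\grad \tilde{V}(g))$, a horizontal field along $g$.

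To conclude \eqref{lift_cubic}, note that $E$ horizontal gives $\langle E, S\rangle_G = \langle E, \Hor(S)\rangle_G = \langle E, \widetilde{\bar S}\rangle_G = \langle \pi_\ast E, \bar S\rangle_H$ since $\pi$ is a Riemannian submersion; and as basic variations of $g$ correspond to admissible proper variations of $q$, the $\bar S$ that arise are exactly the elements of $T_q\Omega$ (Theorem \ref{thm: variational_principles_homo_G} together with Lemma \ref{lemma: curves_in_h}). The classical fundamental lemma of the calculus of variations then forces $\pi_\ast E = 0$, hence $E = 0$ because $\pi_\ast$ is injective on horizontal vectors. For smoothness, project once more: $q = \pi \circ g$ is a modified Riemannian cubic with respect to $V$ by Theorem \ref{thm: variational_principles_homo_G}, hence $C^\infty$ by Proposition \ref{th1}, and the horizontal lift of a $C^\infty$ curve through a fixed point solves an ODE with smooth coefficients and is therefore $C^\infty$; thus $g$ is $C^\infty$.

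The step I expect to require the most care is disciplined bookkeeping of the $\Hor$ and $\V$ projections: $\HorLC$ is a connection in the strict sense only on $\Gamma(TG) \times \Gamma(HG)$, so every differentiation must be arranged so that \eqref{compatibility}, \eqref{torsion}, \eqref{nabla_lift} and the commutation identity are applied with horizontal arguments, and the curvature rearrangement must be routed through the genuine symmetries of $\text{Rm}$ on $H$ rather than the weaker ones of $\text{Qm}$ in Lemma \ref{Qm_symmetries}. An equivalent but shorter route bypasses the ambient computation: Theorem \ref{thm: variational_principles_homo_G} and Proposition \ref{th1} give that $g$ solves \eqref{G_variational_prob} iff $q = \pi \circ g$ satisfies \eqref{eqq1}, and one then lifts \eqref{eqq1} using $\widetilde{D_t^k \dot q} = (\HorD_t)^k \dot g$ (from $\HorD_t X = \widetilde{D_t \pi_\ast X}$ for basic $X$ along the basic curve $g$), the fact that $\tilde{Q}$ is the horizontal lift of $R$, and $\widetilde{\grad_H V(q)} = \Hor(\grad \tilde{V}(g))$, which holds because $\tilde V = V \circ \pi$ and $\pi$ is a Riemannian submersion.
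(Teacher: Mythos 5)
Your main computation is the paper's own proof: the same first variation of \eqref{G_variational_prob} using metric compatibility of $\HorLC$, the commutation identity $\HorD_s\HorD_t T-\HorD_t\HorD_s T=\tilde{Q}(S,T)T$ along basic variations, the symmetry $\left<\tilde{Q}(S,T)T,\HorD_t T\right>_G=\left<\tilde{Q}(\HorD_t T,T)T,S\right>_G$ (valid because $\tilde{Q}$ is the horizontal lift of $R$ on $H$ and hence inherits the full symmetries of $\mathrm{Rm}$), and two integrations by parts with vanishing boundary terms. Your way of closing the argument—pushing the horizontal Euler--Lagrange field $E$ down by $\pi_\ast$, applying the fundamental lemma on $H$, and using injectivity of $\pi_\ast$ on horizontal vectors—is a harmless variant of, and if anything slightly more careful than, the paper's step of choosing $\delta g = E$ (which is legitimate there because $E$ is basic).

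Two places in your write-up, however, silently assume $\tilde V = V\circ\pi$, which is not a hypothesis of this proposition: the statement allows an arbitrary smooth $\tilde V: G\to\R$, and that extra generality is precisely what is exploited afterwards in Corollary \ref{Cor: arbitrary_potential} and Remark \ref{remark: potentials_local}. First, your smoothness upgrade (``$q=\pi\circ g$ is a modified cubic with respect to $V$ by Theorem \ref{thm: variational_principles_homo_G}, hence $C^\infty$ by Proposition \ref{th1}'') has no global $V$ on $H$ to appeal to when $\tilde V$ is not a pullback; the local potentials of Corollary \ref{Cor: arbitrary_potential} would repair this, but that corollary is itself deduced from the present proposition, so invoking it here would be circular—the regularity has to come out of the variational argument directly (the fundamental lemma on each smooth piece together with the corner conditions from the boundary terms), a point the paper also leaves implicit. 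Second, your ``equivalent but shorter route'' of lifting \eqref{eqq1} from $H$ only covers the pullback case and is therefore not equivalent to the statement; the reason for working upstairs with $\HorD_t$ and $\tilde{Q}$ in the first place is to handle potentials on $G$ that do not descend to $H$.
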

\begin{proof}
Consider a basic variation $g_s$ of $g$, and let $T = \partial_t g_s$ and $S = \partial_s g_s$. Further denote $\delta g = S\vert_{s=0}$ Then,
\begin{align*}
    0 &= \frac{d}{ds}\Big\vert_{s=0} \int_a^b \left(\left<\HorD_t \dot{g}_s, \HorD_t \dot{g}_s\right>_G + \tilde{V}(g_s)\right)dt \\
    &= 2\int_a^b \left(\left<\HorD_s \HorD_t T, \HorD_t T\right>_G + \left<\grad\tilde{V}(g_s), S\right>_G\right)\Big\vert_{s=0}dt \\
    &= 2\int_a^b \left(\left<\HorD_t \HorD_s T, \HorD_t T\right>_G + \left<\tilde{Q}(S, T) T, \HorD_t T\right>_G + \left<\grad\tilde{V}(g_s), S\right>_G\right)\Big\vert_{s=0}dt \\
    &= 2\int_a^b \left(\left<\HorD_t \HorD_t S, \HorD_t T\right>_G + \left<\tilde{Q}(\HorD_t T, T)T + \grad\tilde{V}(g_s),  S\right>_G\right)\Big\vert_{s=0}dt \\
    &=2\int_a^b \left<\left(\HorD_t\right)^3 \dot{g} + \tilde{Q}(\HorD_t \dot{g}, \dot{g})\dot{g} + \Hor(\grad \tilde{V}(g)), \delta g\right>_G dt,
\end{align*}
where in the last line, we applied integration by parts twice to the first term and evaluated at $s = 0$, and used the fact that $\left<X, Y\right>_G = \left<\Hor(X), Y\right>_G$ for all $X \in \Gamma(TG), Y \in \Gamma(HG)$. As a consequence of Lemma \ref{lemma: curves_in_h} and the definitions of $\HorD$ and $\tilde{Q}$, the left-hand side of \eqref{lift_cubic} is a basic vector field. The conclusion follows upon setting $\delta g = \left(\HorD_t\right)^3 \dot{g} + \tilde{Q}(\HorD_t \dot{g}, \dot{g})\dot{g} + \grad \tilde{V}(g)$.
\end{proof}

Note that, since $g$ is a basic curve, it is the horizontal lift of some $q: [a, b] \to H$ (in particular, $q = \pi \circ g$). It is clear from their respective definitions that $\left(\HorD_t\right)^3 \dot{g} = \widetilde{D_t^3 \dot{q}}$ and $\tilde{Q}(\HorD_t \dot{g}, \dot{g})\dot{g} = \widetilde{R(D_t \dot{q}, \dot{q})\dot{q}}$. Moreover, from Lemma \ref{lemma: curves_in_h}, there exists some vector field $X \in \Gamma(q)$ whose tangent lift satisfies $\tilde{X}(t) = \Hor( \grad \tilde{V}(g(t)))$ for all $t \in [a,b]$. Observe that $X$ is locally extendible. That is, for any $t_0 \in [a, b]$, there exists some subinterval $(a^\ast, b^\ast) \subset [a, b]$ containing $t_0$, a neighborhood $U \subset H$ containing $q((a^\ast, b^\ast)),$ and a smooth vector field $Y: U \to TH$ such that $Y(q(t)) = X(t)$ for all $t \in (a^\ast, b^\ast).$ Consequently, the horizontal lift of $Y$ satisfies $Y(g(t)) = \Hor(\grad \tilde{V}(g(t))$ for all $t \in (a^\ast, b^\ast)$. On the other hand, consider the $1$-form $Y^\flat$. If $H$ is simply connected (this follows for instance if $G$ is simply connected and $K$ is connected), then there must exist a scalar field $V: U \to \R$ such that $dV = Y^\flat$. Hence, $\grad V = Y$, and so $\widetilde{\grad V}(g(t)) = \Hor(\tilde{V}(g(t)))$ for all $t \in (a^\ast, b^\ast)$. Observe that these vector fields need not agree away from curve $g$, however we obtain immediately that $g$ is locally the horizontal lift of a modified cubic with respect to $V$. This, together with Theorem \ref{thm: variational_principles_homo_G}, leads to the following corollary to Proposition \ref{thm: necessary_homo_G}.

\begin{corollary}\label{Cor: arbitrary_potential}
Suppose that $g: [a, b] \to G$ is a basic curve satisfying equation \eqref{lift_cubic} for some smooth $\tilde{V}: G \to \R$. Then for each $t_0 \in [a, b]$, there exists some $(a^\ast, b^\ast) \subset [a, b]$ containing $t_0$, a neighborhood $U \subset H$ containing $q((a^\ast, b^\ast)),$ and a scalar field $V: U \to \R$ such that the curve $q \vert_{(a^\ast, b^\ast)} := \pi \circ g \vert_{(a^\ast, b^\ast)}$ is a modified cubic polynomial with respect to $V$. Moreover, $q$ is a modified cubic with respect to $V$ on [a, b] if and only if $\tilde{V} = V \circ \pi$ for some smooth $V: H \to \R$. 
\end{corollary}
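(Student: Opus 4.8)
The plan is to port everything through the correspondence ``basic objects on $G$ $\leftrightarrow$ objects on $H$'' that the section has already set up, and then read the statement off \eqref{lift_cubic}. Since $g$ is basic it is the horizontal lift of $q := \pi \circ g$, and directly from the definitions of $\HorD_t$ and $\tilde{Q}$ (together with Lemma \ref{lemma: Hor_G_to_H} and the computation of $\tilde{Q}$ preceding Lemma \ref{Am_symmetries}) one has $\left(\HorD_t\right)^3 \dot{g} = \widetilde{D_t^3 \dot{q}}$ and $\tilde{Q}(\HorD_t \dot{g}, \dot{g})\dot{g} = \widetilde{R(D_t \dot{q}, \dot{q})\dot{q}}$. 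The term $\Hor(\grad \tilde{V}(g))$ is a horizontal vector field along the basic curve $g$, so by Lemma \ref{lemma: curves_in_h} (applied to $\tilde\eta = L_{g^{-1}\ast}\Hor(\grad\tilde V(g))$) it is the horizontal lift of a unique $X \in \Gamma(q)$, with $X(t) = \pi_\ast \grad \tilde{V}(g(t))$. Substituting these three facts into \eqref{lift_cubic} and using that horizontal lifting is $\R$-linear and injective converts \eqref{lift_cubic} into the single equation $D_t^3 \dot{q} + R(D_t \dot{q}, \dot{q})\dot{q} + X = 0$ on $[a,b]$ --- which is \eqref{eqq1}, except the force term $X$ is so far only a vector field along $q$ rather than a gradient.

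For the local statement, fix $t_0 \in [a,b]$. Choosing $(a^\ast, b^\ast) \ni t_0$ and a chart $U \subset H$ around $q(t_0)$ small enough that $X$ extends to a smooth $Y \in \Gamma(TU)$ with $Y(q(t)) = X(t)$ on $(a^\ast, b^\ast)$, I would then produce the potential directly: take $V$ on $U$ whose differential along $q$ equals $X^\flat$ --- concretely, antidifferentiate the component of $X^\flat$ in the ``time'' coordinate and add the remaining components linearly in the transverse coordinates, or, as the excerpt indicates, shrink $U$ to a ball so that the closed $1$-form $Y^\flat$ is exact, $Y^\flat = dV$. Either way $\grad V(q(t)) = X(t)$, so substituting $X = \grad V(q)$ turns the equation of the previous paragraph into \eqref{eqq1} for the potential $V$ on $U$, i.e. $q|_{(a^\ast, b^\ast)}$ is a modified cubic with respect to $V$; smoothness is automatic from Proposition \ref{th1} (and is anyway part of Proposition \ref{thm: necessary_homo_G}). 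We also record that $\widetilde{\grad V}(g(t)) = \Hor(\grad \tilde{V}(g(t)))$ along $g|_{(a^\ast, b^\ast)}$.

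For the global clause, the ``if'' direction is immediate: if $\tilde{V} = V \circ \pi$ for a smooth $V : H \to \R$, then $d\tilde{V} = \pi^\ast dV$ forces $\grad \tilde{V}$ to be horizontal and $\pi$-related to $\grad V$, i.e. $\grad \tilde{V} = \widetilde{\grad V}$; hence $\Hor(\grad \tilde{V}(g)) = \widetilde{\grad V(q)}$ and \eqref{lift_cubic} becomes $\widetilde{D_t^3 \dot{q} + R(D_t \dot{q}, \dot{q})\dot{q} + \grad V(q)} = 0$, so \eqref{eqq1} holds on all of $[a,b]$ by injectivity of the lift and $q$ is a modified cubic with respect to $V$. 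For the converse I would run Theorem \ref{thm: variational_principles_homo_G} backwards: a global modified cubic $q$ with respect to $V$ lifts to a $g$ satisfying \eqref{G_variational_prob}, hence \eqref{lift_cubic}, with $\tilde{V}$ replaced by $V \circ \pi$, and one compares this instance of \eqref{lift_cubic} with the one in the hypothesis to identify the two potentials.

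The step I expect to be the main obstacle is exactly this last comparison. Equation \eqref{lift_cubic} only sees a potential through $\Hor(\grad(\cdot)(g))$ along the single curve $g$, so subtracting two instances of it yields merely $\pi_\ast \grad \tilde{V}(g(t)) = \grad V(q(t))$ for $t \in [a,b]$, which is weaker than the pointwise identity $\tilde{V} = V \circ \pi$ on $G$; upgrading one to the other is where the argument must lean on the ambient setup of the section (or, in effect, on restricting attention to potentials that descend to $H$), and it is the point I would scrutinise most carefully. A secondary, milder, point is the local extendibility of $X$ used above: it is routine wherever $q$ is an immersion, but deserves a remark at velocity-zeros of $q$, where one falls back on the explicit coordinate construction of $V$ rather than on extending $X$.
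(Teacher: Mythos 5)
Your proposal follows the paper's own route essentially step for step: the paper's ``proof'' is the paragraph immediately preceding Corollary \ref{Cor: arbitrary_potential} (the lift identities for the cubic terms, Lemma \ref{lemma: curves_in_h} to realize $\Hor(\grad\tilde V(g))$ as the horizontal lift of some $X\in\Gamma(q)$, local extension of $X$ to a field $Y$ on a neighborhood $U\subset H$, and a primitive $V$ of $Y^\flat$), combined with the paragraph after the corollary and Theorem \ref{thm: variational_principles_homo_G} for the ``if'' half of the final clause. Where you deviate, you are actually more careful than the paper: the paper produces $V$ by asserting that $Y^\flat$ is exact on a simply connected $U$, which requires $Y^\flat$ to be closed and is not justified (your own parenthetical ``the closed $1$-form $Y^\flat$'' inherits the same issue); your primary construction --- building $V$ in coordinates so that merely $\grad V(q(t))=X(t)$ along the curve --- is all the corollary needs and avoids the problem, and your caveat about extendibility of $X$ at velocity zeros of $q$ is the right one to record.

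Your scruple about the ``only if'' clause is also well founded, and it is not a gap in your argument but in the statement: the paper offers no proof of that direction, and indeed remarks just before the corollary that the lifted gradient fields ``need not agree away from curve $g$.'' Comparing the hypothesis instance of \eqref{lift_cubic} with the one obtained from Theorem \ref{thm: variational_principles_homo_G} and Proposition \ref{thm: necessary_homo_G} yields only $\Hor(\grad\tilde V(g(t)))=\widetilde{\grad V}(g(t))$ along $g$, exactly as you say, so the clause can only be read as the sufficiency statement (potentials of the form $V\circ\pi$ give modified cubics on all of $[a,b]$), not as a literal recovery of $\tilde V=V\circ\pi$ on $G$ from the behavior of a single curve. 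In short: correct, same approach as the paper for everything the paper actually proves, with a sounder construction of the local potential.
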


In the case that $\tilde{V} = V \circ \pi$ for some $V: H \to \R$, observe that $\grad \tilde{V}(g)$ must be horizontal since $\left<\grad \tilde{V}(h), U\right>_G = d\tilde{V}(U) = dV \circ \pi_\ast(U) = 0$ for all $h \in G, U \in V_h$. Hence $\Hor(\grad \tilde{V}) = \grad \tilde{V}$. Moreover, if we consider any $\tilde{X} \in Hor_h$, then:
\begin{align*}
    \left<\grad \tilde{V}(h), \tilde{X}\right>_G = d\tilde{V}(\tilde{X}) = dV(X) = \left<\grad V(\pi(h)), X\right>_H = \left<\widetilde{\grad V(\pi(h))}, \tilde{X}\right>_G,
\end{align*}
where $X = \pi_\ast \tilde{X}$. Therefore, $\grad \tilde{V}$ is precisely the horizontal lift of $\grad V$. It follows that that equation \eqref{lift_cubic} is precisely the horizontal lift of equation \eqref{eqq1} on $H$. Horizontally lifting \eqref{eqq1} is a strategy that was employed in \cite{zhang2018left} in the case that $V \equiv 0$. In particular, equations were provided in terms of the Levi-Civita connection $\tilde{\nabla}$ and the Riemannian curvature $\tilde{R}$ on $G$. However, the form of these equations did not resemble equation \eqref{eqq1} describing Riemannian cubics on $G$, whereas \eqref{lift_cubic} takes the same form as \eqref{eqq1}, with $\tilde{D}_t$ replaced by $\HorD_t$ and $\tilde{R}$ replaced by $\tilde{Q}$. Hence we see that the basic modified cubics satisfy a direct anologue of the equation describing modified cubics. We need only consider the projection of the Levi-Civita connection onto the horizontal bundle, and its corresponding curvature tensor. Moreover, we were able to find local horizontal lifts in Corollary \ref{Cor: arbitrary_potential}, which allows for more diverse artificial potentials on $G$ (this will become important in Section \ref{Sec: homo_bi}). The most substantial benefit to working directly with the variational principles instead of lifting the resulting equations to $G$, however, is that we may adapt the situation to reduction by symmetry in the case of symmetry breaking artificial potentials (as in Section \ref{reduction_obs_avoid_G}). The remainder of the section is dedicated to finding the Euler-Poincar\'e equations corresponding to \eqref{lift_cubic}.

As with \eqref{g-connection}, we may define the Riemannian $\mathfrak{h}$-connection $\tilde{\nabla}^{\mathfrak{h}}: \mathfrak{h} \times \mathfrak{h} \to \mathfrak{h}$ via 
\begin{equation}
    \hcon_{\xi}\eta = \left(\HorLC_{\phi(\xi)}\phi(\eta)\right)(e).
\end{equation}
It is clear that $\hcon_\xi \eta = \Hor(\tilde{\nabla}_{\xi}^\g\eta)$, where $\tilde{\nabla}^\g$ is the Riemannian $\g$-connection corresponding to the Levi-Civita connection $\tilde{\nabla}$ on $G$. Therefore, we obtain the explicit expression 
\begin{equation}\label{hcon_decomp}
    \hcon_{\xi} \eta =\frac12 \Hor([\xi, \eta]_\g - \ad^\dagger_{\xi} \eta - \ad^\dagger_\eta \xi).
\end{equation}
We now express Lemma \ref{lemma: cov-to-covg} in terms of the Riemannian $\mathfrak{h}$-connection and horizontal connection:

\begin{lemma}\label{lemma: Horcov-to-covh}
Let $g: [a,b] \to G$ be a basic curve and $X$ a smooth horizontal vector field along $g$. Suppose that $\xi(t) = g(t)^{-1} \dot{g}(t)$ and $\eta(t) = g(t)^{-1} X(t)$. Then the following relation holds for all $t \in [a, b]$:
\begin{align}
    \HorD_t X(t) = g(t)\left(\dot{X}(t) + \nabla_{\xi}^{\mathfrak{h}} \eta(t) \right).
\end{align}
\end{lemma}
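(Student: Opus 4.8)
The plan is to deduce this lemma from Lemma~\ref{lemma: cov-to-covg} by applying the horizontal projection to both sides. Recall that by definition $\HorD_t X = \Hor(\tilde{D}_t X)$, where $\tilde{D}_t$ is the covariant derivative along $g$ induced by the left-invariant Levi-Civita connection $\tilde{\nabla}$ on $G$. Since $g$ is smooth and $X$ is a smooth vector field along $g$, Lemma~\ref{lemma: cov-to-covg} applies verbatim (with $\nabla$ replaced by $\tilde{\nabla}$ and $\nabla^\g$ by $\tilde{\nabla}^\g$), giving $\tilde{D}_t X(t) = g(t)(\dot{X}(t) + \tilde{\nabla}^\g_{\xi}\eta(t))$. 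Applying $\Hor$ then reduces the statement to two facts: that $\Hor$ commutes with left translation, and that the two terms $\dot{X}(t)$ and $\tilde{\nabla}^\g_{\xi}\eta(t)$ behave correctly under $\Hor$.

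For the first fact, I would observe that the vertical distribution is left-invariant: from $\pi \circ L_h = \Phi_h \circ \pi$ we obtain $\pi_\ast \circ L_{h\ast} = \Phi_{h\ast} \circ \pi_\ast$, so $L_{h\ast}$ carries $V_g = \ker(\pi_\ast)_g$ into $V_{hg}$, and by a dimension count $L_{h\ast} V_g = V_{hg}$. Because $\left< \cdot, \cdot\right>_G$ is left-invariant, $L_{h\ast}$ therefore also maps $\text{Hor}_g = V_g^\perp$ onto $\text{Hor}_{hg}$ (this is in fact already implicit in the commutative diagram~\eqref{commutative_diag}), and hence the orthogonal projection $\Hor$ satisfies $\Hor(L_{h\ast} w) = L_{h\ast}(\Hor w)$ for all $w \in T_g G$. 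Consequently $\HorD_t X(t) = \Hor(g(t)(\dot{X}(t) + \tilde{\nabla}^\g_{\xi}\eta(t))) = g(t)(\Hor(\dot{X}(t)) + \Hor(\tilde{\nabla}^\g_{\xi}\eta(t)))$.

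For the second fact: since $g$ is basic, $\dot{g}(t)$ is horizontal, so $\xi(t) = g(t)^{-1}\dot{g}(t) \in \text{Hor}_e = \mathfrak{h}$; since $X$ is horizontal, $\eta(t) = g(t)^{-1}X(t) \in \text{Hor}_e = \mathfrak{h}$ for every $t$. As $\mathfrak{h}$ is a fixed linear subspace of $\g$, differentiating the $\mathfrak{h}$-valued curve $\eta$ gives $\dot{X}(t) = \dot{\eta}(t) \in \mathfrak{h}$, whence $\Hor(\dot{X}(t)) = \dot{X}(t)$. Finally, recalling the identity $\hcon_{\xi}\eta = \Hor(\tilde{\nabla}^\g_{\xi}\eta)$ for $\xi, \eta \in \mathfrak{h}$ noted just after the definition of $\hcon$, we get $\Hor(\tilde{\nabla}^\g_{\xi}\eta(t)) = \nabla^{\mathfrak{h}}_{\xi}\eta(t)$. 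Substituting both into the previous display yields $\HorD_t X(t) = g(t)(\dot{X}(t) + \nabla^{\mathfrak{h}}_{\xi}\eta(t))$, as claimed.

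The only real point requiring care — the ``main obstacle,'' such as it is — is assembling the left-invariance ingredients: verifying that $\Hor$ genuinely commutes with $L_{h\ast}$, which uses both left-invariance of the vertical distribution and left-invariance of the metric, and noting that $\dot{X}(t)$ actually lies in the subspace $\mathfrak{h}$ rather than merely being horizontal at $g(t)$. Everything else is a routine translation of Lemma~\ref{lemma: cov-to-covg} through the horizontal projection.
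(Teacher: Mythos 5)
Your argument is correct, and it is exactly the route the paper intends: the paper states this lemma without proof as the horizontal analogue of Lemma~\ref{lemma: cov-to-covg}, and your proof fills that in by applying $\Hor$ to the identity $\tilde{D}_t X = g(\dot{X} + \tilde{\nabla}^\g_{\xi}\eta)$, checking that $\Hor$ commutes with $L_{g\ast}$ (left-invariance of the vertical distribution plus left-invariance of the metric, as already encoded in diagram~\eqref{commutative_diag}) and that $\xi$, $\eta$, and hence $\dot{\eta}=\dot{X}$ lie in the fixed subspace $\mathfrak{h}=\mathrm{Hor}_e$, so that $\Hor(\tilde{\nabla}^\g_\xi\eta)=\hcon_\xi\eta$. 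No gaps.
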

Proposition \ref{prop: cubic_red} follows analogously, which leads to the following Euler-Poincar\'e equations corresponding to \eqref{lift_cubic}:

\begin{proposition}\label{prop: reduction_left_inv_H}
Consider the variational obstacle avoidance problem \textbf{P1} with $Q = H$ and the additional assumptions \textbf{H1, H2}. Suppose that $q \in \Omega$ and let $g$ be a horizontal lift of $q$. Then $q$ is a modified cubic polynomial with respect to $V$ if and only if $\xi := g^{-1} \dot{g}$ and $h := g_0^{-1}g$ satisfy:
\begin{align}
    \dot{\xi} &= \eta + \Hor(\ad^\dagger_{\xi} \xi), \label{ad_dag_h}\\
    \ddot{\eta} + 2\nabla^{\mathfrak{h}}_\xi \dot{\eta} + \nabla^{\mathfrak{h}}_{\eta} \eta + \nabla^{\mathfrak{h}}_{\Hor(\ad^\dagger_\xi \xi)} \eta +& \nabla^{\mathfrak{h}}_\xi \nabla^{\mathfrak{h}}_\xi \eta + \tilde{Q}\big{(}\eta, \xi \big{)}\xi + L_{h^{-1 \ast}} \grad_1 \tilde{V}_{\ext}(h, e) = 0, \label{eqq2_h} \\
    \dot{h}(t) &= h(t)\xi(t). \label{eqqh_h}
\end{align}
\end{proposition}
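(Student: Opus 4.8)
The plan is to mirror the argument of Proposition \ref{prop: reduction_left_inv}, working directly with the variational principle \eqref{G_variational_prob} of Theorem \ref{thm: variational_principles_homo_G} rather than with the necessary conditions on $H$. By that theorem, $q$ is a modified cubic with respect to $V$ if and only if the horizontal lift $g$ satisfies $\delta\int_a^b(\|\HorD_t\dot g\|_G^2 + \tilde V(g))dt = 0$ among basic variations, and by Proposition \ref{thm: necessary_homo_G} this is equivalent to $(\HorD_t)^3\dot g + \tilde Q(\HorD_t\dot g, \dot g)\dot g + \Hor(\grad\tilde V(g)) = 0$. Under assumption \textbf{H2}, $\tilde V = V\circ\pi$ satisfies \textbf{G2}, so there is an extended potential $\tilde V_{\ext}: G\times G\to\R$ left-invariant on $G\times G$ with $\tilde V_{\ext}(\cdot, g_0) = \tilde V$. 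Thus the variational principle reads $\delta\int_a^b(\|\HorD_t\dot g\|_G^2 + \tilde V_{\ext}(h, e))dt = 0$ where $h := g_0^{-1}g$.

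First I would compute the reduced form of the curvature/derivative term. Set $\xi := g^{-1}\dot g$ and $\eta := \dot\xi - \Hor(\ad^\dagger_\xi\xi)$; this is the natural analogue of $\eta = \dot\xi - \ad^\dagger_\xi\xi$ from Proposition \ref{prop: cubic_red}, now using that by Lemma \ref{lemma: Horcov-to-covh} one has $\HorD_t\dot g = g(\dot\xi + \hcon_\xi\xi)$ and $\hcon_\xi\xi = \frac12\Hor([\xi,\xi]_\g - 2\ad^\dagger_\xi\xi) = -\Hor(\ad^\dagger_\xi\xi)$ by \eqref{hcon_decomp}, so $\HorD_t\dot g = g\eta$ and \eqref{ad_dag_h} follows as the definition of $\eta$. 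Iterating Lemma \ref{lemma: Horcov-to-covh} exactly as in the proof of Proposition \ref{prop: cubic_red} — using $\R$-bilinearity of $\hcon$ and the chain rule — gives $(\HorD_t)^3\dot g = g(\ddot\eta + 2\hcon_\xi\dot\eta + \hcon_{\dot\xi}\eta + \hcon_\xi\hcon_\xi\eta)$, and substituting $\dot\xi = \eta + \Hor(\ad^\dagger_\xi\xi)$ yields $g(\ddot\eta + 2\hcon_\xi\dot\eta + \hcon_\eta\eta + \hcon_{\Hor(\ad^\dagger_\xi\xi)}\eta + \hcon_\xi\hcon_\xi\eta)$. For the curvature term, since $\tilde Q$ is tensorial and maps basic fields to basic fields, and $\dot g, \HorD_t\dot g$ are horizontal (hence $\HorD_t\dot g = g\eta$, $\dot g = g\xi$), left-invariance gives $\tilde Q(\HorD_t\dot g,\dot g)\dot g = g\,\tilde Q(\eta,\xi)\xi$ — here I use that $\HorLC$ is built from the left-invariant $\tilde\nabla$ via $\Hor$, and $\Hor$ commutes with $L_{g\ast}$ because the metric $\left<\cdot,\cdot\right>_G$ is left-invariant, so the splitting $\g = \s\oplus\h$ is preserved under left translation along $g$.

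Next I would handle the potential term, following verbatim the computation in Proposition \ref{prop: reduction_left_inv}: with $\sigma := g^{-1}\delta g$, one has $\delta h = L_{h\ast}\sigma$, and
\begin{align*}
\delta\int_a^b \tilde V_{\ext}(h, e)dt = \int_a^b \left<L_{h^{-1\ast}}\grad_1\tilde V_{\ext}(h, e),\ \sigma\right>_G dt.
\end{align*}
Combining with the previous paragraph, dividing out the overall left-translation (an isometry), and applying the fundamental lemma of the calculus of variations over the basic variational vector fields — which by Lemma \ref{lemma: curves_in_h} range over all of $\h$-valued curves vanishing appropriately at the endpoints — produces \eqref{eqq2_h}. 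Finally \eqref{eqqh_h} is the reconstruction equation, obtained from $\dot h = g_0^{-1}\dot g = g_0^{-1}g\,g^{-1}\dot g = h\xi$ exactly as before.

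The main obstacle is the bookkeeping around horizontality: one must check that $\Hor(\grad\tilde V(g))$ equals $L_{h^{-1\ast}}\grad_1\tilde V_{\ext}(h,e)$ after reduction, i.e. that the projection onto the horizontal bundle is compatible with left translation and with the momentum-map identification, and that the left-hand side of \eqref{eqq2_h} is genuinely a basic vector field (so that testing against basic variations suffices to conclude it vanishes) — this is where Lemma \ref{lemma: curves_in_h} and the tensoriality of $\tilde Q$ and $A$ are essential, and where the argument differs most from the Lie group case. A secondary point is verifying that the identity $\HorD_t X = g(\dot X + \hcon_\xi\eta)$ from Lemma \ref{lemma: Horcov-to-covh} can be iterated even though $\HorLC$ is only a genuine connection on $\Gamma(TG)\times\Gamma(HG)$; this is fine because at each stage the relevant vector fields along $g$ remain horizontal, so all intermediate expressions stay within the domain where $\HorLC$ behaves like an affine connection.
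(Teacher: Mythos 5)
Your proposal is correct and follows essentially the same route the paper takes: the paper's (very terse) argument is exactly to combine Theorem \ref{thm: variational_principles_homo_G} and Proposition \ref{thm: necessary_homo_G} with the analogue of Proposition \ref{prop: cubic_red} via Lemma \ref{lemma: Horcov-to-covh} and \eqref{hcon_decomp}, and to reduce the potential term verbatim as in Proposition \ref{prop: reduction_left_inv}, which is what you do. Your attention to the horizontality bookkeeping (that $\eta$, the reduced terms, and $L_{h^{-1\ast}}\grad_1\tilde{V}_{\ext}(h,e)$ are $\mathfrak{h}$-valued because $\tilde{V}=V\circ\pi$ and left translation preserves the splitting, so testing against basic variations suffices) correctly fills in the details the paper leaves implicit.
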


\subsection{Reduction on Symmetric Spaces and Homogeneous Spaces with bi-invariant Metrics}\label{Sec: homo_bi}

Analogously to Section \ref{sec: red_biinv}, we are interesting further simplifying equations \eqref{ad_dag_h} - \eqref{eqqh_h} in the case that $G$ is endowed with a bi-invariant metric. We first obtain simplified expressions for the the Riemannian $\mathfrak{h}$-connection and the tensor field $\tilde{Q}$. 

\begin{lemma}\label{h_bi_identities}
Suppose that $\left< \cdot, \cdot\right>_G$ is bi-invariant, and let $\xi, \eta, \sigma \in \mathfrak{h}$. Then,
\begin{align}
    \hcon_{\xi}\eta &= \frac12 \Hor\big([\xi, \eta]\big), \label{nabla_h bi}\\
    \tilde{Q}(\xi, \eta)\sigma &= \frac14\big(\Hor\big([\sigma, [\xi, \eta]]\big) - [\xi, \mathcal{V}([\eta, \sigma]) + [\eta, \mathcal{V}([\xi, \sigma])] + 2[\sigma, \mathcal{V}([\xi, \eta])]\big). \label{Q_tild_bi}
\end{align}
\end{lemma}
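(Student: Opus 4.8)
The plan is to reduce both identities to the bi-invariant formulas of Lemma \ref{bi-invariant_G}, applied to the Riemannian $\g$-connection $\tilde{\nabla}^\g$ on $G$, and then carefully track the horizontal and vertical projections. For the first identity, I would start from the already-established relation $\hcon_{\xi}\eta = \Hor(\tilde{\nabla}^\g_\xi \eta)$ (stated just before the lemma), substitute the bi-invariant formula $\tilde{\nabla}^\g_\xi \eta = \tfrac12[\xi,\eta]_\g$ from \eqref{cov_bi} of Lemma \ref{bi-invariant_G}, and obtain $\hcon_\xi \eta = \tfrac12\Hor([\xi,\eta])$ immediately. This is the routine part.

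For the second identity, the starting point is the formula derived in the text just before Lemma \ref{lemma: Horcov-to-covh}: for basic lifts, $Q(\tilde W,\tilde X)\tilde Y = \widetilde{R(W,X)Y} - 2A_{\tilde Y}A_{\tilde W}\tilde X$, together with the definition $\tilde{Q}(\tilde W,\tilde X)\tilde Y = Q(\tilde W,\tilde X)\tilde Y + 2A_{\tilde Y}A_{\tilde W}\tilde X$. Evaluating at the identity for $\xi,\eta,\sigma\in\mathfrak h$, I would instead work directly with $\tilde Q$ as the horizontal lift of the curvature endomorphism $R$ on $H$. Here the key is to express $R$ on $H$ via the O'Neill-type relation connecting the curvature of $H$ to that of $G$ and the tensor $A$. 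Concretely, using \eqref{eq: covG-to-covH} of Lemma \ref{lemma: cov_G_to_H} to relate $\tilde\nabla$ on $G$ (with its bi-invariant curvature \eqref{R_bi}) to $\nabla$ on $H$, one gets the O'Neill formula
\[
\langle R(W,X)Y,Z\rangle_H = \langle \tilde R(\tilde W,\tilde X)\tilde Y,\tilde Z\rangle_G - 2\langle A_{\tilde W}\tilde X, A_{\tilde Y}\tilde Z\rangle_G + \langle A_{\tilde X}\tilde Y, A_{\tilde W}\tilde Z\rangle_G - \langle A_{\tilde W}\tilde Y, A_{\tilde X}\tilde Z\rangle_G,
\]
which, combined with \eqref{A_X Y} giving $A_\xi \eta = \tfrac12\mathcal V([\xi,\eta])$ and the bi-invariant curvature $\tilde R(\xi,\eta)\sigma = -\tfrac14[[\xi,\eta]_\g,\sigma]_\g$, yields an expression for $R(W,X)Y$ on $H$ purely in terms of brackets and the projections $\Hor, \mathcal V$. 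Lifting this back up (so $\tilde Q$ replaces $R$) and using $-\tfrac14[[\xi,\eta],\sigma] = \tfrac14[[\xi,\eta],\sigma]^{(\text{rearranged})}$ plus the Jacobi identity to match signs should produce exactly \eqref{Q_tild_bi}.

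The main obstacle I expect is the bookkeeping in the second identity: correctly assembling the three ``$A$-terms'' of the O'Neill formula into the symmetric combination $[\xi,\mathcal V([\eta,\sigma])] + [\eta,\mathcal V([\xi,\sigma])] + 2[\sigma,\mathcal V([\xi,\eta])]$, and making sure the $\Hor([\sigma,[\xi,\eta]])$ term carries the right sign after applying the Jacobi identity to rewrite $-\tfrac14\Hor([[\xi,\eta],\sigma])$. One must also be careful that each $A_{\,\cdot}\,\cdot$ term is vertical (by \eqref{A_X Y}), so that applying $A$ again lands in the horizontal bundle, while the $\tilde R$ term must be split into horizontal and vertical parts since the bi-invariant formula produces a full Lie bracket $[[\xi,\eta],\sigma]$ that need not be horizontal. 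Tracking these projections consistently — rather than the algebra of brackets itself — is where the care is needed; once the projections are sorted, the identity follows by direct substitution and the Jacobi identity.
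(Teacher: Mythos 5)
Your treatment of \eqref{nabla_h bi} is exactly the paper's: substitute the bi-invariant formula for $\tilde{\nabla}^\g$ into $\hcon_\xi\eta=\Hor(\tilde{\nabla}^\g_\xi\eta)$. For \eqref{Q_tild_bi} your route is also essentially the paper's, only in $(0,4)$-form: the paper invokes the endomorphism version of the same O'Neill-type identity (Theorem 2.3 of \cite{zhang2018left}), namely $\tilde{Q}(\xi,\eta)\sigma=\Hor(\tilde{R}(\xi,\eta)\sigma)-A_\xi A_\eta\sigma+A_\eta A_\xi\sigma+2A_\sigma A_\xi\eta$, and then computes each term directly from $A_{u_1}u_2=\frac12\mathcal{V}([u_1,u_2])$ for $u_1,u_2\in\mathfrak{h}$ and $A_{u_1}v=\frac12\Hor([u_1,v])$ for $v\in\mathfrak{s}$, together with $\tilde R(\xi,\eta)\sigma=-\frac14[[\xi,\eta],\sigma]=\frac14[\sigma,[\xi,\eta]]$. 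Your curvature formula is consistent with the paper's conventions (it follows from the displayed identity $\langle A_WA_XY,Z\rangle_G=-\langle A_XY,A_WZ\rangle_G$ used in Lemma \ref{Am_symmetries}), so your bookkeeping would go through; note, however, that no Jacobi identity is needed anywhere—only antisymmetry of the bracket to pass from $-[[\xi,\eta],\sigma]$ to $[\sigma,[\xi,\eta]]$, plus (on your route) $\ad$-skew-symmetry of the bi-invariant metric to move the vertical factors off the inner products.

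There is one concrete ingredient missing from your plan: to obtain \eqref{Q_tild_bi} \emph{as stated}, with no horizontal projection on the three $A$-type terms, you must know that $[\mathfrak{h},\mathfrak{s}]\subset\mathfrak{h}$, i.e.\ that $G/K$ is reductive. Your computation (whether in endomorphism form or via pairing against horizontal $\zeta$ and dualizing) naturally produces $\frac14\Hor([\xi,\mathcal{V}([\eta,\sigma])])$ and its companions, and the $\Hor$ cannot be dropped without this fact; indeed, without it the right-hand side of \eqref{Q_tild_bi} could have a vertical component while the left-hand side is horizontal by construction. The paper closes this gap explicitly: since $\mathfrak{s}$ is a subalgebra and the metric is $\Ad$-invariant, differentiating $\langle\Ad_{\Exp(ty)}x,\Ad_{\Exp(ty)}z\rangle=\langle x,z\rangle=0$ for $x,y\in\mathfrak{s}$, $z\in\mathfrak{h}$ gives $\langle x,[y,z]\rangle=0$, hence $[y,z]\in\mathfrak{h}$. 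Adding this observation (and replacing the appeal to Jacobi by plain antisymmetry) makes your proposal complete; aside from that, the main difference from the paper is that it works directly with the $(1,3)$ identity, which avoids your final dualization step.
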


\begin{proof}
\eqref{nabla_h bi} follows immediately from Lemma \ref{bi-invariant_G} together with the fact that $\hcon_\xi \eta = \Hor(\tilde{\nabla}^\g \xi \eta)$. For \eqref{Q_tild_bi},it follows from Theorem $2.3$ of \cite{zhang2018left} that $\tilde{Q}(\xi, \eta)\sigma = \Hor(\tilde{R}(\xi, \eta)\sigma) - A_{\xi} A_{\eta} \sigma + A_{\eta} A_{\xi} \sigma + 2 A_{\sigma} A_{\xi} \eta$. Note that $\Hor(\tilde{R}(\xi, \eta)\sigma) = \frac14 \Hor([\sigma, [\xi, \eta]])$ from Lemma \ref{bi-invariant_G}. From the definition of $A$, it is clear that if $u_1, u_2 \in \mathfrak{h}$ and $v \in \mathfrak{s}$, then $A_{u_1} u_2 = \mathcal{V}(\tilde{\nabla}^\g_{u_1} u_2) = \frac12 \mathcal{V}([u_1, u_2])$ and $A_{u_1} v = \hcon_{u_1} v = \frac12\Hor([u_1, v])$. Hence, $A_{\xi} A_{\eta} \sigma = \frac14 \Hor( [\xi, \mathcal{V}([\eta, \sigma])])$, and similarly for $A_{\eta} A_{\xi} \sigma$ and $A_{\sigma} A_{\xi} \eta.$ Finally, we show that $G/K$ is in fact a \textit{reductive} homogeneous space. That is, $[\mathfrak{h}, \mathfrak{s}] \subset \mathfrak{h}$. Equation \eqref{Q_tild_bi} follows immediately upon observing that $\mathcal{V}: \g \to \mathfrak{s}$, and $\mathcal{H}$ acts as the identity map on $\mathfrak{h}$. To see that $G/K$ is reductive, first observe that $[\mathfrak{s}, \mathfrak{s}] \subset \mathfrak{s}$ since $K$ is a Lie subgroup of $G$, and hence $\mathfrak{s}$ is a Lie subalgebra of $\g$. Now suppose that $x, y \in \mathfrak{s}, z \in \mathfrak{h}$. Then, $0 = \left< x, z \right> = \left< \Ad_{\Exp(ty)} x, \Ad_{\Exp(ty)} z \right>.$ Taking a derivative at $t=0$ then yields $0 = \left<[y, x], z\right> + \left<x, [y, z]\right> = \left<x, [y, z]\right>.$ Hence $[y, z] \in \mathfrak{h}$, and the conclusion follows. 
Combining these yields \eqref{Q_tild_bi}.
\end{proof}

We then obtain the following Proposition by combining Proposition \ref{prop: reduction_left_inv_H} with Lemma \ref{h_bi_identities}.

\begin{proposition}\label{prop: reduction_bi_inv_H}
Consider the variational obstacle avoidance problem \textbf{P1} with $Q = H$ and the additional assumptions $\textbf{H1}^\ast$, \textbf{H2}. Suppose that $q \in \Omega$ and let $g$ be a horizontal lift of $q$. Then $q$ is a modified cubic polynomial with respect to $V$ if and only if $\xi := g^{-1} \dot{g}$ and $h := g_0^{-1}g$ satisfy:
\begin{align}
    \dddot{\xi} + \Hor([\xi, \ddot{\xi}]) + [\xi, [\dot{\xi}, \xi]] - \frac34 &[\xi, \Hor([\dot{\xi}, \xi])] + L_{h^{-1 \ast}} \grad_1 \tilde{V}_{\ext}(h, e) = 0, \label{eqq2_h_bi} \\
    \dot{h}(t) &= h(t)\xi(t). \label{eqqh_h_bi}
\end{align}
Similarly, under the assumptions $\textbf{H1}^{\ast \ast}$, \textbf{H2}, $q$ is a modified cubic polynomial with respect to $V$ if and only if $\xi$ and $h$ satisfy:
\begin{align}
    \dddot{\xi} + \Hor([\xi, \ddot{\xi}]) + [\xi, [\dot{\xi}, \xi]] - \frac34 &[\xi, \Hor([\dot{\xi}, \xi])] + \beta(L_{h^{-1 \ast}} \grad_1^{\text{Bi}} \tilde{V}_{\ext}(h, e)) = 0, \label{eqq2_h_bi_comp} \\
    \dot{h}(t) &= h(t)\xi(t). \label{eqqh_h_bi_comp}
\end{align}
\end{proposition}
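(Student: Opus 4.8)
The plan is to obtain both sets of equations by specializing Proposition~\ref{prop: reduction_left_inv_H} under the bi-invariance hypothesis, substituting the simplified formulas for $\nabla^{\mathfrak{h}}$ and $\tilde{Q}$ from Lemma~\ref{h_bi_identities} and then repackaging everything into Lie-bracket form.

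For the first statement (assumptions $\textbf{H1}^\ast$, $\textbf{H2}$), the key initial observation is that since $\langle\cdot,\cdot\rangle_G$ is bi-invariant, $\ad_\xi$ is skew-adjoint, so $\ad^\dagger_\xi\xi=-[\xi,\xi]=0$. Hence \eqref{ad_dag_h} collapses to $\eta=\dot\xi$, and the term $\nabla^{\mathfrak{h}}_{\Hor(\ad^\dagger_\xi\xi)}\eta$ drops out of \eqref{eqq2_h}. Substituting $\eta=\dot\xi$ into the remaining terms and applying \eqref{nabla_h bi} gives $\ddot\eta=\dddot\xi$, $\nabla^{\mathfrak{h}}_\eta\eta=\frac12\Hor([\dot\xi,\dot\xi])=0$, $2\nabla^{\mathfrak{h}}_\xi\dot\eta=\Hor([\xi,\ddot\xi])$, and $\nabla^{\mathfrak{h}}_\xi\nabla^{\mathfrak{h}}_\xi\dot\xi=\frac14\Hor([\xi,\Hor([\xi,\dot\xi])])$; the curvature term is evaluated from \eqref{Q_tild_bi} with $(\xi,\eta,\sigma)$ there replaced by $(\dot\xi,\xi,\xi)$, where $[\xi,\xi]=0$ kills one summand. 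The last step is to add $\nabla^{\mathfrak{h}}_\xi\nabla^{\mathfrak{h}}_\xi\dot\xi$ to $\tilde{Q}(\dot\xi,\xi)\xi$ and simplify; this is where the reductive decomposition $[\mathfrak{h},\mathfrak{s}]\subset\mathfrak{h}$ (established inside the proof of Lemma~\ref{h_bi_identities}) is needed, since it lets one rewrite $\Hor([\xi,[\dot\xi,\xi]])=\Hor([\xi,\Hor([\dot\xi,\xi])])+[\xi,\mathcal{V}([\dot\xi,\xi])]$, the last summand already being horizontal. Collecting everything, together with the potential term $L_{h^{-1\ast}}\grad_1\tilde{V}_{\ext}(h,e)$ inherited unchanged from Proposition~\ref{prop: reduction_left_inv_H}, yields \eqref{eqq2_h_bi}; the reconstruction equation \eqref{eqqh_h_bi} is identical to \eqref{eqqh_h}. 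I expect this bracket bookkeeping — keeping track of which brackets of horizontal elements are again horizontal and which acquire a vertical part — to be the only genuinely delicate point, and the step most prone to sign and coefficient slips.

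For the second statement (assumptions $\textbf{H1}^{\ast\ast}$, $\textbf{H2}$) one runs the same computation; the sole modification is in the potential term, and it is handled exactly as in the proof of Proposition~\ref{prop: reduction_left_inv_comp}. Writing $\sigma:=g^{-1}\delta g$ and $\delta h=L_{h^\ast}\sigma$, one first expresses the variation $\delta\int_a^b\tilde{V}_{\ext}(h,e)\,dt$ through the gradient of $\tilde{V}_{\ext}$ with respect to the bi-invariant metric, obtaining $\int_a^b\langle L_{h^{-1\ast}}\grad_1^{\text{Bi}}\tilde{V}_{\ext}(h,e),\sigma\rangle_{\text{Bi}}\,dt$, and then converts this into a pairing with respect to the metric driving the dynamics via the endomorphism $\beta$, producing the term $\beta(L_{h^{-1\ast}}\grad_1^{\text{Bi}}\tilde{V}_{\ext}(h,e))$. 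The fundamental lemma of the calculus of variations then delivers \eqref{eqq2_h_bi_comp}, \eqref{eqqh_h_bi_comp}. No new obstacle arises here beyond the computations already described.
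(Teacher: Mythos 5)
Your route is exactly the paper's: collapse \eqref{ad_dag_h} to $\eta=\dot\xi$ via $\ad^\dagger_\xi\xi=0$, substitute the bi-invariant formulas of Lemma \ref{h_bi_identities} into \eqref{eqq2_h}, sort horizontal and vertical parts using reductivity $[\mathfrak{h},\mathfrak{s}]\subset\mathfrak{h}$, and treat the $\textbf{H1}^{\ast\ast}$ case by the $\beta$-argument of Proposition \ref{prop: reduction_left_inv_comp}; the paper's proof is a condensed statement of precisely this, and every individual substitution you list is correct.

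The difficulty sits in the one step you defer (``collecting everything''), and it is exactly there that the claimed conclusion does not come out. With $\eta=\dot\xi$ the surviving terms of \eqref{eqq2_h} are $\dddot\xi$, $\Hor([\xi,\ddot\xi])$, $\nabla^{\mathfrak{h}}_\xi\nabla^{\mathfrak{h}}_\xi\dot\xi=-\frac14\Hor\big([\xi,\Hor([\dot\xi,\xi])]\big)$ and $\tilde{Q}(\dot\xi,\xi)\xi=\frac14\Hor\big([\xi,[\dot\xi,\xi]]\big)+\frac34[\xi,\mathcal{V}([\dot\xi,\xi])]$, plus the potential. Inserting your own identity $\Hor\big([\xi,[\dot\xi,\xi]]\big)=\Hor\big([\xi,\Hor([\dot\xi,\xi])]\big)+[\xi,\mathcal{V}([\dot\xi,\xi])]$, the third and fourth contributions sum to $[\xi,\mathcal{V}([\dot\xi,\xi])]=[\xi,[\dot\xi,\xi]]-[\xi,\Hor([\dot\xi,\xi])]$, so the equation your computation actually produces is
\begin{equation*}
\dddot\xi + \Hor([\xi,\ddot\xi]) + [\xi,[\dot\xi,\xi]] - [\xi,\Hor([\dot\xi,\xi])] + L_{h^{-1 \ast}}\grad_1\tilde{V}_{\ext}(h,e)=0,
\end{equation*}
i.e.\ coefficient $-1$, not $-\frac34$, on $[\xi,\Hor([\dot\xi,\xi])]$. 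The source of the mismatch is not Lemma \ref{h_bi_identities}: formula \eqref{Q_tild_bi} checks out against O'Neill's submersion formula and reproduces the sectional curvature $\frac14\|[X,Y]_{\mathfrak{h}}\|^2+\|[X,Y]_{\mathfrak{s}}\|^2$ of a normal homogeneous space. So you must either exhibit the bookkeeping that genuinely yields the $-\frac34$ (which I do not believe follows from the cited lemmas) or record that \eqref{eqq2_h_bi} and \eqref{eqq2_h_bi_comp} as printed appear to carry a coefficient slip; the discrepancy term $\frac14[\xi,\Hor([\dot\xi,\xi])]$ vanishes when $[\mathfrak{h},\mathfrak{h}]\subset\mathfrak{s}$, so Proposition \ref{prop: reduction_sym_left} and the $S^2$ example are unaffected either way. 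The second half of your proposal, converting the potential term with $\beta$ under $\textbf{H1}^{\ast\ast}$, is fine and matches the paper.
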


\begin{proof}
First observe that, since the metric is bi-invariant, we have $\left<\ad_x y, z\right> + \left<y, \ad_x z\right> = 0$ for all $x, y, z \in \g$. Hence $\ad^\dagger = -\ad$, and since $\ad$ is skew-symmetric, we obtain $\ad_{\xi}^\dagger \xi = 0$, so that \eqref{ad_dag_h} becomes $\dot{\xi} = \eta$. Equation \eqref{eqq2_h_bi} then follows directly from Lemma \ref{h_bi_identities} together with the decomposition $x = \mathcal{H}(x) + \mathcal{V}(x)$ for all $x \in \g$. Equation \eqref{eqq2_h_bi_comp} follows similarly with the proof strategy given  in \ref{prop: reduction_left_inv_comp}.
\end{proof}

Another special class of spaces that appears frequently in applications is \textit{Riemannian symmetric spaces}. These are Riemannian homogeneous spaces such that there exists an involutive automorphism $\sigma: G \to G$ with $K = \{g \in G \: \ \sigma(g) = g\}.$ It can be seen that every Riemannian symmetric space of the form $G/K$ satisfies the Cartan Decomposition:
\begin{equation}\label{cartan}
    [\mathfrak{s}, \mathfrak{s}] \subset \mathfrak{s}, \quad [\mathfrak{s}, \mathfrak{h}] \subset \mathfrak{h}, \quad [\mathfrak{h}, \mathfrak{h}] \subset \mathfrak{s}.
\end{equation}
In particular, every Riemannian symmetric space is reductive. Moreover, if $G$ is simply connected, then every Riemannian homogeneous space satisfying \eqref{cartan} is a Riemannian symmetric space. It is neither necessary nor sufficient that $\left< \cdot, \cdot\right>_G$ be bi-invariant in order for $G/K$ to be Riemannian symmetric. Hence, we have the following additional assumptions:

\begin{quote}
    \textbf{S1} (respectively $\textbf{S1}^\ast$, $\textbf{S1}^{\ast\ast}$):  $H := G/K$ is a Riemannian symmetric space, where $G$ satisfies assumption \textbf{G1} (respectively $\textbf{G1}^\ast$, $\textbf{G1}^{\ast\ast}$). \\
\end{quote}

Using equation \eqref{cartan}, we obtain the following Propositions corresponding to Proposition \ref{prop: reduction_left_inv_H}:

\begin{proposition}\label{prop: reduction_sym_left}
Consider the variational obstacle avoidance problem \textbf{P1} with $Q = H$ and the additional assumptions $\textbf{S1}$, \textbf{H2}. Suppose that $q \in \Omega$ and let $g$ be a horizontal lift of $q$. Then $q$ is a modified cubic polynomial with respect to $V$ if and only if $\xi := g^{-1} \dot{g}$ and $h := g_0^{-1}g$ satisfy:
\begin{align}
    \dddot{\xi} + \tilde{Q}(\eta, \xi)\xi + &L_{h^{-1 \ast}} \grad_1 \tilde{V}_{\ext}(h, e) = 0, \label{eqq2_sym} \\
    \dot{h}(t) &= h(t)\xi(t). \label{eqqh_sym}
\end{align}
Similarly, under the assumptions $\textbf{S1}^{\ast}$, \textbf{H2}, $q$ is a modified cubic polynomial with respect to $V$ if and only if $\xi$ and $h$ satisfy:
\begin{align}
    \dddot{\xi} + [\xi, [\dot{\xi}, \xi]] + &L_{h^{-1 \ast}} \grad_1 \tilde{V}_{\ext}(h, e) = 0, \label{eqq2_sym_bi} \\
    \dot{h}(t) &= h(t)\xi(t). \label{eqqh_sym_bi}
\end{align}
Finally, under the assumptions $\textbf{S1}^{\ast \ast}$, \textbf{H2}, $q$ is a modified cubic polynomial with respect to $V$ if and only if $\xi$ and $h$ satisfy
\begin{align}
    \dddot{\xi} + [\xi, [\dot{\xi}, \xi]] + & \beta(L_{h^{-1 \ast}} \grad_1^{\text{Bi}} \tilde{V}_{\ext}(h, e)) = 0, \label{eqq2_sym_bi_comp} \\
    \dot{h}(t) &= h(t)\xi(t). \label{eqqh_sym_bi_comp}
\end{align}
\end{proposition}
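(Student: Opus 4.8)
The plan is to derive all three pairs of equations as specializations of Proposition \ref{prop: reduction_left_inv_H}, exactly as Proposition \ref{prop: reduction_bi_inv_H} was derived, but now exploiting the Cartan decomposition \eqref{cartan} in place of (or in addition to) bi-invariance. First I would record the structural consequence of \eqref{cartan} that we will use repeatedly: if $\xi, \eta \in \mathfrak{h}$ then $[\xi, \eta] \in \mathfrak{s}$, hence $\Hor([\xi, \eta]) = 0$ and $\mathcal{V}([\xi,\eta]) = [\xi,\eta]$; and if $\sigma \in \mathfrak{h}$ as well, then $[\sigma, [\xi,\eta]] \in [\mathfrak{h}, \mathfrak{s}] \subset \mathfrak{h}$, so $\Hor([\sigma,[\xi,\eta]]) = [\sigma,[\xi,\eta]]$. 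In particular, from \eqref{hcon_decomp} (or equivalently from the fact that $\hcon_\xi\eta = \Hor(\tilde\nabla^\g_\xi\eta)$), the term $\Hor([\xi,\eta]_\g)$ drops out, leaving $\hcon_\xi\eta = -\tfrac12\Hor(\ad^\dagger_\xi\eta + \ad^\dagger_\eta\xi)$; in the symmetric case we do not get to simplify $\ad^\dagger$ further unless additional invariance is assumed, which is why \eqref{eqq2_sym} retains the $\tilde Q$ term and the $\Hor(\ad^\dagger_\xi\xi)$ in \eqref{ad_dag_h}.

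For the assumption pair $\textbf{S1}, \textbf{H2}$: I would start from \eqref{ad_dag_h}--\eqref{eqqh_h_bi} (wait — from \eqref{ad_dag_h}--\eqref{eqqh_h}) and observe that \eqref{cartan} alone does not kill $\Hor(\ad^\dagger_\xi\xi)$, so the cleanest statement keeps $\eta$ implicitly; but the reduced equation \eqref{eqq2_sym} as written has collapsed the five $\nabla^{\mathfrak h}$-terms of \eqref{eqq2_h} into $\dddot\xi$. The mechanism for this is that, for a symmetric space, the horizontal part of the Levi-Civita connection restricted to $\mathfrak h \times \mathfrak h$ vanishes in the relevant directions: more precisely, one checks term by term that each of $2\nabla^{\mathfrak h}_\xi\dot\eta$, $\nabla^{\mathfrak h}_\eta\eta$, $\nabla^{\mathfrak h}_{\Hor(\ad^\dagger_\xi\xi)}\eta$, and $\nabla^{\mathfrak h}_\xi\nabla^{\mathfrak h}_\xi\eta$ — together with the $\ddot\eta$ term and the relation $\dot\xi = \eta + \Hor(\ad^\dagger_\xi\xi)$ — recombine into $\dddot\xi$ after using that $\nabla^{\mathfrak h}_\xi\eta$ is the horizontal projection of $\tfrac12([\xi,\eta] - \ad^\dagger_\xi\eta - \ad^\dagger_\eta\xi)$ and that the Cartan relations force $\Hor([\cdot,\cdot])=0$ on $\mathfrak h\times\mathfrak h$. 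I would write this out carefully, since it is the one genuinely computational point; the curvature contribution survives as $\tilde Q(\eta,\xi)\xi$, which for symmetric spaces one might further rewrite via \eqref{Q_tild_bi}-type formulas but the statement leaves it abstract, so no further work is needed there.

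For $\textbf{S1}^\ast, \textbf{H2}$: here the metric on $G$ is bi-invariant, so $\ad^\dagger = -\ad$ and $\ad^\dagger_\xi\xi = 0$, giving $\dot\xi = \eta$ directly, and \eqref{nabla_h bi} gives $\hcon_\xi\eta = \tfrac12\Hor([\xi,\eta])$, which vanishes for $\xi,\eta\in\mathfrak h$ by the Cartan relation $[\mathfrak h,\mathfrak h]\subset\mathfrak s$. Thus all the $\nabla^{\mathfrak h}$-terms in \eqref{eqq2_h} vanish except those that are absorbed into $\dddot\xi = \ddot\eta$, and $\tilde Q(\eta,\xi)\xi$ must be computed from \eqref{Q_tild_bi} with $[\mathfrak h,\mathfrak h]\subset\mathfrak s$: there $\Hor([\sigma,[\xi,\eta]]) = [\sigma,[\xi,\eta]]$ (it lands in $\mathfrak h$ by $[\mathfrak h,\mathfrak s]\subset\mathfrak h$), while $\mathcal V([\eta,\sigma]) = [\eta,\sigma]$ etc., so $\tilde Q(\xi,\eta)\sigma = \tfrac14([\sigma,[\xi,\eta]] - [\xi,[\eta,\sigma]] - [\eta,[\xi,\sigma]] - 2[\sigma,[\xi,\eta]])$, which by the Jacobi identity collapses to $[\xi,[\eta,\sigma]]$ up to sign; evaluating at $(\eta,\xi,\xi)$ gives $[\xi,[\dot\xi,\xi]]$, matching \eqref{eqq2_sym_bi}. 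I would present this Jacobi-identity simplification as the key step of this case. Finally, the $\textbf{S1}^{\ast\ast}, \textbf{H2}$ case is identical to $\textbf{S1}^\ast$ for the dynamical terms (the dynamics come from the left-invariant metric, which here is assumed bi-invariant? — no: under $\textbf{G1}^{\ast\ast}$ the \emph{dynamics-defining} metric need not be bi-invariant), so one reruns the $\textbf{S1}^\ast$ computation but carries the extra endomorphism $\beta$ through the potential term exactly as in the proof of Proposition \ref{prop: reduction_left_inv_comp}, replacing $\grad_1\tilde V_{\ext}$ by $\beta(\grad_1^{\text{Bi}}\tilde V_{\ext})$; I would simply cite that proof. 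The main obstacle I anticipate is the bookkeeping in the first case (ordinary symmetric space, not bi-invariant): verifying that the four covariant-derivative terms genuinely recombine into $\dddot\xi$ requires expanding $\nabla^{\mathfrak h}$ via \eqref{hcon_decomp}, tracking which $\ad^\dagger$ terms survive the horizontal projection under the Cartan relations, and confirming no residual terms remain — this is where an error would most likely hide, so I would do it slowly and cross-check against the $V\equiv 0$ literature \cite{zhang2018left}.
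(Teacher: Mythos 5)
Your plan for the second and third cases is essentially sound, but the first case ($\textbf{S1}$, $\textbf{H2}$) contains a genuine gap, and it is precisely the point on which the paper's proof turns. You assert that the Cartan relations alone do not kill $\Hor(\ad^\dagger_\xi\xi)$, and that $\hcon_\xi\eta = -\tfrac12\Hor(\ad^\dagger_\xi\eta + \ad^\dagger_\eta\xi)$ cannot be simplified further without additional invariance; you then hope that the terms $\ddot\eta$, $2\nabla^{\mathfrak h}_\xi\dot\eta$, $\nabla^{\mathfrak h}_\eta\eta$, $\nabla^{\mathfrak h}_{\Hor(\ad^\dagger_\xi\xi)}\eta$, $\nabla^{\mathfrak h}_\xi\nabla^{\mathfrak h}_\xi\eta$ in \eqref{eqq2_h} ``recombine'' into $\dddot\xi$ via a deferred term-by-term computation. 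That computation would not close: if $\hcon$ did not vanish on $\mathfrak h\times\mathfrak h$ and $\eta\ne\dot\xi$, residual $\ad^\dagger$ terms would survive and \eqref{eqq2_sym} would be false as written. The missing observation—which is the entire content of the paper's argument for this case—is that $[\mathfrak h,\mathfrak h]\subset\mathfrak s$ together with the orthogonality $\mathfrak h\perp\mathfrak s$ (built into the definition of the horizontal subspace) forces $\ad^\dagger_{\mathfrak h}\mathfrak h\subset\mathfrak s$: for $\xi,\eta,\zeta\in\mathfrak h$ one has $\left<\ad^\dagger_\xi\eta,\zeta\right> = \left<\eta,[\xi,\zeta]_\g\right> = 0$ because $[\xi,\zeta]_\g\in\mathfrak s$. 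Hence, by Lemma \ref{lemma: covg-decomp}, every term of $\tilde\nabla^\g_\xi\eta$ lies in $\mathfrak s$, so $\hcon_\xi\eta = 0$ for all $\xi,\eta\in\mathfrak h$; in particular $\Hor(\ad^\dagger_\xi\xi)=0$, so \eqref{ad_dag_h} gives $\eta=\dot\xi$, all the connection terms in \eqref{eqq2_h} simply vanish (they do not recombine), and \eqref{eqq2_sym}--\eqref{eqqh_sym} drop out of Proposition \ref{prop: reduction_left_inv_H} in one line. Your explicit denial of exactly this fact is the error; without it your case-one argument fails.

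The remaining cases match the paper up to minor differences of route. For $\textbf{S1}^\ast$ you recompute $\tilde Q$ directly from \eqref{Q_tild_bi} and collapse it with the Cartan relations and the Jacobi identity; despite some sign carelessness in your intermediate formula, the evaluation $\tilde Q(\dot\xi,\xi)\xi = [\xi,[\dot\xi,\xi]]$ is correct and yields \eqref{eqq2_sym_bi}, whereas the paper gets there slightly more economically by applying \eqref{cartan} to the already-reduced equations of Proposition \ref{prop: reduction_bi_inv_H}, which kills $\Hor([\xi,\ddot\xi])$ and $\Hor([\dot\xi,\xi])$ there. For $\textbf{S1}^{\ast\ast}$ you do exactly what the paper does: keep the dynamical terms from the previous case and carry $\beta$ through the potential term as in the proof of Proposition \ref{prop: reduction_left_inv_comp}; the subtlety you flag parenthetically (the dynamics-defining left-invariant metric need not be bi-invariant) is left unaddressed by the paper as well, so it does not distinguish your argument from theirs.
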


\begin{proof}
Observe that $[\mathfrak{h}, \mathfrak{h}] \subset \mathfrak{s}$ implies that $\ad^\dagger_{\mathfrak{h}} \mathfrak{h} \subset \mathfrak{s}$. From \ref{lemma: covg-decomp}, it then follows that $\hcon_x y = 0$ for all $x, y \in \mathfrak{h}$. Equations \eqref{eqq2_sym} - \eqref{eqqh_sym} then follow immediately from \ref{prop: reduction_left_inv_H}. Under assumption $\textbf{S1}^{\ast}$, equations \eqref{eqq2_sym_bi} - \eqref{eqqh_sym_bi} follow upon applying \eqref{cartan} to Proposition \ref{prop: reduction_bi_inv_H}. Similarly, equations \eqref{eqq2_sym_bi_comp} - \eqref{eqqh_sym_bi_comp} follow from the same strategy used to show \eqref{eqq2_sym_bi} - \eqref{eqqh_sym_bi} hold, together with the proof strategy outlined in \ref{prop: reduction_left_inv_comp}.
\end{proof}

We now return the problem of obstacle avoidance. In particular, we consider the potential $\displaystyle{V(q) = \frac{\tau}{1 + (d_H(q, q_0)/D)^{2N}}}$, with the extended potential given by $\displaystyle{V_{\ext}(q, q_0) = \frac{\tau}{1 + (d_H(q, q_0)/D)^{2N}}}$, where $d_H: H \to H$ is the Riemannian distance on $H$ corresponding to $\left< \cdot, \cdot \right>_H$. We first show that $d_H$ is $G$-invariant, from which it follows immediately that $V_{\ext}$ is $G$-invariant. 

\begin{lemma}\label{lemma: distance-left-inv_H}
$d(gq, gp) = d(q, p)$ for all $g \in G, q, p \in H$.
\end{lemma}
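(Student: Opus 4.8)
The plan is to reduce the claim to the elementary fact that a Riemannian isometry preserves the Riemannian distance. First I would invoke Lemma~\ref{lemma: G-inv_met}: since $\left<\cdot,\cdot\right>_G$ is left-invariant and $\pi\colon G\to H$ is a Riemannian submersion, the metric $\left<\cdot,\cdot\right>_H$ is $G$-invariant, so $\Phi_g\colon H\to H$ is a smooth Riemannian isometry for every $g\in G$ (recall that $gq$ denotes $\Phi_g(q)$ and that $d=d_H$ here).

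The second step is to argue that $\Phi_g$ preserves $d_H$, working directly from $d_H(p,q)=\inf\{L(c): c\ \text{regular},\ c(0)=p,\ c(1)=q\}$ so as to avoid any appeal to completeness of $H$. For any regular curve $c\colon[0,1]\to H$ from $p$ to $q$, the curve $\Phi_g\circ c$ is regular and runs from $gp$ to $gq$, and because $\Phi_{g\ast}$ is a pointwise linear isometry one has $L(\Phi_g\circ c)=\int_0^1\|\Phi_{g\ast}\dot c\|\,dt=\int_0^1\|\dot c\|\,dt=L(c)$. Taking the infimum over all such $c$ gives $d_H(gp,gq)\le d_H(p,q)$; applying this inequality with $g$ replaced by $g^{-1}$ and $(p,q)$ replaced by $(gp,gq)$ yields the reverse inequality, hence equality.

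If one prefers, the argument can instead mirror the proof of Lemma~\ref{lemma: distance-left-inv} almost verbatim: a Riemannian submersion from the complete manifold $G$ has complete base (horizontal lifts of base geodesics are geodesics of $G$, hence defined for all time, so $H$ is geodesically complete), so a minimal-length geodesic $\gamma$ from $p$ to $q$ exists with $d_H(p,q)=L(\gamma)$; then $\Phi_g\circ\gamma$ joins $gp$ to $gq$ with the same length, giving $d_H(gp,gq)\le d_H(p,q)$, and the opposite inequality follows by the same reasoning applied to $g^{-1}$ and $\Phi_g\circ\gamma$.

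There is no substantial obstacle; the only point requiring care is that it is the length functional, and not merely the metric tensor, that must be shown to be $\Phi_g$-invariant, but this is immediate once $\Phi_g$ is known to be an isometry. Substituting $\Phi_g(q)=gq$ then gives exactly $d(gq,gp)=d(q,p)$, as stated, and consequently $V_{\ext}$ is $G$-invariant since it depends on its arguments only through $d_H$.
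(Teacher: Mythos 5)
Your argument is correct, and its main route differs mildly but genuinely from the paper's. The paper argues exactly as in Lemma~\ref{lemma: distance-left-inv}: it asserts that $H$ is complete, takes a minimizing geodesic $\gamma$ from $p$ to $q$ with $d(p,q)=L(\gamma)$, uses the $G$-invariance of $\left<\cdot,\cdot\right>_H$ (Lemma~\ref{lemma: G-inv_met}) to get $L(\gamma)=L(g\gamma)\ge d(gp,gq)$, and obtains the reverse inequality from a minimizing geodesic joining $gp$ to $gq$. Your primary argument uses the same key ingredient (Lemma~\ref{lemma: G-inv_met}, i.e.\ that $\Phi_g$ is an isometry of $H$) but works directly with the infimum defining $d_H$: every regular curve from $p$ to $q$ maps under $\Phi_g$ to one of equal length from $gp$ to $gq$, and applying the inequality to $g^{-1}$ gives equality. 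This is more elementary in that it needs neither completeness of $H$ nor existence of minimizing geodesics, whereas the paper's version tacitly assumes $H$ is complete without comment; your parenthetical justification (completeness of $H$ is inherited from the complete $G$ through the Riemannian submersion, since horizontal lifts of base geodesics are geodesics of $G$) is exactly the point the paper leaves implicit, and your second variant then reproduces the paper's proof essentially verbatim. Either route is acceptable; yours has the small advantage of removing the completeness hypothesis from the argument entirely.
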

\begin{proof}
Since $H$ is complete as a Riemannian manifold, there exists a geodesic $\gamma: [0, 1] \to H$ which minimizes the length functional $\displaystyle{L(c) = \int_0^1 \|\dot{c}(t)\|_H dt}$ among all smooth curves $c:[0, 1] \to H$ satisfying $\ c(0) = p, \ c(1) = q$. Moreover, we have $d(p, q) = L(\gamma)$. By Lemma \ref{lemma: G-inv_met}, we obtain $d(p, q) = L(\gamma) = L(g\gamma) \ge d(gp, gq)$, since in particular $g \gamma$ is a smooth curve such that $g\gamma(0) = gp, \ g\gamma(1) = gq$. On the other hand, there exists some geodesic $\gamma^\ast$ such that $L(\gamma^\ast) = d(gp, gq)$, and so $d(gp, gq) = L(\gamma^\ast) = L(g^{-1}\gamma^\ast) \ge d(p, q)$. It follows that $d(p, q) = d(gp, gq)$.
\end{proof}
It follows immediately from Lemma \ref{lemma: distance-left-inv_H} and $\Phi_g \circ \pi = \pi \circ L_g$ that $\displaystyle{\tilde{V}_{\ext}(g, g_0) = \frac{\tau}{1 + (d_H(\pi(g), \pi(g_0))/D)^{2N}}}$ is left-invariant. However, observe that in order to take advantage of a bi-invariant metric on $G$ to calculate the potential through the Lie exponential map (as we did in Section \ref{sec: red_biinv}), we must have $\displaystyle{\tilde{V}_{\ext}(g, g_0) = \frac{\tau}{1 + (d_G(g, g_0)/D)^{2N}}}$. Unfortunately, it is not necessarily the case $d_G(g, g_0) = d_H(\pi(g), \pi(g_0))$. In fact, if we let $c: [0, 1] \to G$ be the minimizing geodesic such that $d_G(g, g_0) = L(c)$, then $\bar{c} := \pi \circ c$ is the minimizing geodesic connecting $\pi(g)$ and $\pi(g_0)$ (here we assume that $g, g_0 \in G$ are contained in some geodesically convex ball). Hence,
\begin{align*}
    d_G(g, g_0) &= \int_0^1 \|\dot{c}\|_G dt \\
    &\ge \int_0^1 \|\Hor(\dot{c})\|_G dt \\
    &= \int_0^1 \|\dot{\bar{c}}\|_H dt \\
    &= d_H(\pi(g), \pi(g_0)).
\end{align*}
In particular, equality holds if and only if the geodesic $c$ is horizontal. The horizontal lift of a geodesic on $H$ is in turn a geodesic on $G$, so that in particular the curve $c^\ast := \widetilde{\pi \circ c}$ with $c^\ast(0) = g$ is the minimizing geodesic connecting its endpoints However, it will not generally be true that $c^\ast(1) = g_0$—we need only have $c^\ast(1) \in \pi^{-1}(\{ \pi(g_0) \}).$ This leads to the following lemma.

\begin{lemma}\label{lemma: dist_H_to_G}
Let $g \in G.$ Then, for all $h$ in a geodesically convex neighborhood of $g$, there exists a unique $h^\ast \in \pi^{-1}(\{\pi(h)\})$ such that $d_G(g, h^\ast) = d_H(\pi(g), \pi(h^\ast))$. Moreover, the function $\theta: G \times G \to G$ defined implicitly by $\theta(g, h) = h^\ast$ is smooth and left-invariant.
\end{lemma}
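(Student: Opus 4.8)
\emph{Proof proposal.} The plan is to construct $h^\ast$ explicitly as the endpoint of the horizontal lift of a minimizing geodesic downstairs, and then read off existence, uniqueness, smoothness, and left-invariance from standard properties of Riemannian submersions. Fix a geodesically convex neighborhood $\mathcal{B}$ of $g$ small enough that $\pi(\mathcal{B})$ lies in a geodesically convex neighborhood of $\pi(g)$ in $H$ and that the point $h^\ast$ constructed below lies in a geodesically convex neighborhood of $g$. For $h \in \mathcal{B}$, let $\bar{c}\colon[0,1]\to H$ be the unique minimizing geodesic from $\pi(g)$ to $\pi(h)$, set $v := \dot{\bar c}(0) = (\exp_{\pi(g)})^{-1}(\pi(h))$, lift it to $\tilde v := (\pi_\ast\vert_g)^{-1}(v) \in \Hor_g$, and put $c^\ast(s) := \exp_g(s\tilde v)$. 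Since $\pi$ is a Riemannian submersion, a $G$-geodesic with horizontal initial velocity stays horizontal and projects to a geodesic on $H$ (cf.\ \cite{Oneill}); hence $c^\ast$ is horizontal, $\pi\circ c^\ast = \bar c$, and $L(c^\ast) = L(\bar c)$. Define $h^\ast := c^\ast(1)$, so $\pi(h^\ast) = \bar c(1) = \pi(h)$, i.e.\ $h^\ast \in \pi^{-1}(\{\pi(h)\})$. After shrinking $\mathcal{B}$ if necessary, $c^\ast$ is the minimizing geodesic from $g$ to $h^\ast$, so combining with the inequality $d_G \ge d_H\circ(\pi\times\pi)$ established just before the lemma,
\[
d_H(\pi(g),\pi(h^\ast)) \le d_G(g,h^\ast) = L(c^\ast) = L(\bar c) = d_H(\pi(g),\pi(h)) = d_H(\pi(g),\pi(h^\ast)),
\]
which yields the required equality.

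For uniqueness, suppose $h_1^\ast, h_2^\ast \in \pi^{-1}(\{\pi(h)\})$ both satisfy $d_G(g,h_i^\ast) = d_H(\pi(g),\pi(h_i^\ast))$, and let $c_i$ be the constant-speed minimizing geodesic from $g$ to $h_i^\ast$. Then $d_G(g,h_i^\ast) = L(c_i) \ge \int_0^1 \|\Hor(\dot c_i)\|_G\,dt = L(\pi\circ c_i) \ge d_H(\pi(g),\pi(h_i^\ast))$, and the hypothesis forces $\V(\dot c_i)\equiv 0$; thus $c_i$ is the horizontal lift starting at $g$ of the minimizing geodesic $\pi\circ c_i$ from $\pi(g)$ to $\pi(h)$, which by geodesic convexity equals $\bar c$. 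Uniqueness of horizontal lifts with prescribed base point gives $c_1 = c_2$, hence $h_1^\ast = h_2^\ast$.

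Smoothness follows from the closed form $\theta(g,h) = \exp_g\!\big((\pi_\ast\vert_g)^{-1}\big((\exp_{\pi(g)})^{-1}(\pi(h))\big)\big)$: the projection $\pi$, the map $(g,v)\mapsto\exp_g(v)$, the map $(x,y)\mapsto(\exp_x)^{-1}(y)$ on a geodesically convex set, and the horizontal-lift map $g\mapsto(\pi_\ast\vert_g)^{-1}$ (smooth because $HG$ is a smooth subbundle of $TG$) are all smooth, and $\theta$ is their composition. For left-invariance, fix $a\in G$: by the commutative diagram \eqref{commutative_diag}, $L_a$ maps $V_g$ onto $V_{ag}$, and being an isometry of $(G,\left<\cdot,\cdot\right>_G)$ it maps $\Hor_g$ onto $\Hor_{ag}$; moreover $\Phi_a$ is an isometry of $H$ by Lemma \ref{lemma: G-inv_met} and $\pi\circ L_a = \Phi_a\circ\pi$. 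Hence $\Phi_a\circ\bar c$ is the minimizing geodesic from $\pi(ag)$ to $\pi(ah)$, and $L_a\circ c^\ast$ is a horizontal curve through $ag$ projecting onto it, so by uniqueness of horizontal lifts $\theta(ag,ah) = (L_a\circ c^\ast)(1) = a\,\theta(g,h)$.

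The main obstacle is the smoothness statement, which relies on the standard but nontrivial facts that $G$-geodesics with horizontal initial velocity remain horizontal under a Riemannian submersion and that the horizontal-lift map $(\pi_\ast\vert_g)^{-1}$ varies smoothly with $g$; some care is also needed in shrinking $\mathcal{B}$ so that the horizontal lift $c^\ast$ is genuinely the unique minimizing geodesic from $g$ to $h^\ast$, which is what allows the equality-versus-horizontality dichotomy to be applied cleanly.
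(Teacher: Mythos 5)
Your proposal is correct, and its core—producing $h^\ast$ as the endpoint of the horizontal lift, with initial point $g$, of the minimizing geodesic in $H$ from $\pi(g)$ to $\pi(h)$, and getting uniqueness because the equality $d_G(g,h^\ast)=d_H(\pi(g),\pi(h^\ast))$ forces any minimizing geodesic from $g$ to $h^\ast$ to be horizontal and hence the unique horizontal lift of the unique minimizing geodesic downstairs—is exactly the existence/uniqueness mechanism used in the paper (you spell out the horizontality-forcing step more explicitly than the paper does, which is a welcome improvement). Where you genuinely diverge is in the last two claims. For smoothness, the paper characterizes $\theta_g(h)$ as the minimizer of $r \mapsto d_G(g,r)$ over the fiber $\pi^{-1}(\{\pi(h)\})$ and invokes the implicit function theorem, whereas you write the closed form $\theta(g,h)=\exp_g\big((\pi_\ast\vert_g)^{-1}\big(\exp_{\pi(g)}^{-1}(\pi(h))\big)\big)$ and read smoothness off a composition of smooth maps (smoothness of $(x,y)\mapsto\exp_x^{-1}y$ near the diagonal, smoothness of the horizontal-lift bundle map); this is more direct and sidesteps the somewhat delicate argmin/IFT step. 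For left-invariance, the paper again exploits the argmin characterization together with Lemma \ref{lemma: distance-left-inv}, comparing $d_G(hg,\theta(hg,hg_0))$ with $d_G(hg,h\theta(g,g_0))$ and using uniqueness of minimizers, whereas you argue equivariance of the construction itself: $L_a$ is an isometry preserving the vertical and hence the horizontal distribution, $\Phi_a$ is an isometry of $H$ by Lemma \ref{lemma: G-inv_met}, so $L_a\circ c^\ast$ is the horizontal lift through $ag$ of the minimizing geodesic $\Phi_a\circ\bar c$, giving $\theta(ag,ah)=a\,\theta(g,h)$ by uniqueness of lifts. Both routes are valid; the paper's single argmin characterization does double duty for smoothness and invariance, while your explicit formula buys a cleaner smoothness proof at the cost of handling invariance by a separate (but short) equivariance argument, and it leans on the standard Riemannian-submersion fact that geodesics with horizontal initial velocity stay horizontal, which the paper implicitly uses as well.
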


\begin{proof}
We have already seen that such an $h^\ast$ exists, as we need only consider the end point of the horizontal lift of the minimizing geodesic on $H$ connecting $\pi(g)$ and $\pi(h)$ with initial point $g$. The uniqueness of $h^\ast$ follows by the uniqueness of geodesics satisfying boundary conditions in a geodesically convex neighborhood. 

To see that $\theta$ is smooth, first let $\theta_g(\cdot) := \theta(g, \cdot)$ and observe that we have $d_G(g, \theta_g(h)) = d_H(\pi(g), \pi(h))$ for all $h$ in a geodesically convex neighborhood of $G$. Moreover, it is clear that we have \\$\displaystyle{\min_{r \in \pi^{-1}(\{\pi(h)\})} d_G(g, r) = d_H(\pi(g), \pi(h)) = d_G(g, h^\ast)}$. In particular, by uniqueness of the minimizing geodesic, we find that $\displaystyle{\theta_g(h) = \underset{r \in \pi^{-1}(\{\pi(h)\})}{\arg \min} d_G(g, r)}.$ Since $d_G$ is smooth and $r \mapsto d_G(g, r)$ has a unique global minimum for each $g \in G$, we find that $\theta_g$ is smooth by an application of the implicit function theorem. A similar argument then shows that $\theta$ is smooth.

Now let $\gamma_1: [0,1] \to G$ be the minimizing geodesic from $\theta(g, g_0)$ to $g_0$. Then for any $h \in G$, the curve $h \gamma_1$ satisfies $h \gamma_1(0) = hg$ and $h \gamma_1(1) = h \theta(g, g_0)$, and we have $d(g, \theta(g, g_0)) = d(hg, h \theta(g, g_0))$ by Lemma \ref{lemma: distance-left-inv}. Now let $\gamma_2: [0, 1] \to G$ be the minimizing geodesic from $hg$ to $\theta(hg, hg_0)$. By definition of $\theta$, we must have $d(hg, \theta(hg, hg_0)) \le d(hg, h \theta(g, g_0))$, and by another application of Lemma \ref{lemma: distance-left-inv}, we find that $d(g, h^{-1} \theta(hg, hg_0)) \le d(g, \theta(g, g_0))$, which is only possible if in fact they are equal. By uniqueness of minimizing curves, we then find $h \theta(g, g_0) = \theta(hg, hg_0)$. 
\end{proof}

Now consider an obstacle $q_0 \in H$, and fix some $g_0 \in \pi^{-1}(\{q_0\})$. In the context of Lemma \ref{lemma: dist_H_to_G}, we then have $\displaystyle{\tilde{V}_{\ext}(g, g_0) = \frac{\tau}{1 + (d_G(\theta(g), g_0)/D)^{2N}}}$ is smooth and left-invariant. From equation \eqref{potential_exp}, we then obtain

\begin{equation}\label{potential_exp_homo}
    L_{h(t)^{-1 \ast}}\grad_1 \tilde{V}_{\ext}(\theta(h(t)), e) = -\frac{2N\tau \|\exp^{-1}_{e}(\theta(h(t)))\|^{2N-2}}{D^{2N}(1 + (\|\exp^{-1}_{e}(\theta(h(t)))\|/D)^{2N})^2}\exp^{-1}_{e}(\theta(h(t))).
\end{equation} 
Moreover, from \eqref{potential_Log}, we obtain
\begin{equation}\label{potential_Log_homo}
    L_{h(t)^{-1 \ast}}\grad_1 \tilde{V}_{\ext}(h(t), e) = -\frac{2N\tau \|\Log(\theta(h(t)))\|^{2N-2}}{D^{2N}(1 + (\|\Log(\theta(h(t)))\|/D)^{2N})^2}\Log(\theta(h(t)))
\end{equation} 
in the case that $G$ is equipped with a bi-invariant metric, and from \eqref{potential_Log_left}, we have
\begin{equation}\label{potential_Log_left_homo}
    \beta(L_{h(t)^{-1 \ast}}\grad_1^{\text{Bi}} \tilde{V}_{\ext}(h(t), e)) = -\frac{2N\tau \|\Log(\theta(h(t)))\|^{2N-2}}{D^{2N}(1 + (\|\Log(\theta(h(t)))\|/D)^{2N})^2}\beta(\Log(\theta(h(t))))
\end{equation} 
in the case that $G$ is equipped with both left-invariant and bi-invariant metrics.
\begin{remark}\label{remark: potentials_local}
Following Corollary \ref{Cor: arbitrary_potential}, we may alternatively consider the potential \\$\displaystyle{\tilde{V}(g) = \frac{\tau}{1 + (d_G(g, g_0)/D)^{2N}}}$, whose left-invariant extension is given by $\displaystyle{\tilde{V}_{\ext}(g. g_0) = \frac{\tau}{1 + (d_G(g, g_0)/D)^{2N}}}.$ In this case, equations \eqref{ad_dag_h} - \eqref{eqqh_h} take the form:
\begin{align}
    \dot{\xi} &= \eta + \Hor(\ad^\dagger_{\xi} \xi), \label{ad_dag_h_loc}\\
    \ddot{\eta} + 2\nabla^{\mathfrak{h}}_\xi \dot{\eta} + \nabla^{\mathfrak{h}}_{\eta} \eta + \nabla^{\mathfrak{h}}_{\Hor(\ad^\dagger_\xi \xi)} \eta +& \nabla^{\mathfrak{h}}_\xi \nabla^{\mathfrak{h}}_\xi \eta + \tilde{Q}\big{(}\eta, \xi \big{)}\xi + \Hor(L_{h^{-1 \ast}} \grad_1 \tilde{V}_{\ext}(h, e)) = 0, \label{eqq2_h_loc} \\
    \dot{h}(t) &= h(t)\xi(t). \label{eqqh_h_loc}
\end{align}
The gradient potential may then be calculated exactly as in \eqref{potential_Log} if $G$ is equipped with a bi-invariant metric, or as in \eqref{potential_Log_left} if equipped with both a left-invariant and bi-invariant metric. Of course, in general the solutions to \eqref{ad_dag_h_loc}-\eqref{eqqh_h_loc} will only correspond locally to horizontal lifts of modified cubics on $H$. Moreover, the potential $V$ corresponding to these modified cubics may vary among the curve segments, and we will not in general know its exact form. 

However, for the purpose of applications, we only care that $q := g \circ \pi$ is well-behaved. In particular, that it is smooth, avoids obstacles, and is sufficiently close to a Riemannian cubic. This, of course, still holds true. Namely, if we took $\tilde{V} \equiv 0$, $q$ would be a Riemannian cubic polynomial—so that, away from obstacles, $q$ behaves as a Riemannian cubic. Moreover, we may guarantee that $q$ avoids the given obstacle by choosing $\tau, D, N$ appropriately (in particular, $\tau$ will determine how "far" from a Riemannian cubic $q$ is in a global sense). Hence we see that, morally, $q$ has all of the relevant features that we desire from modified cubics, despite not globally being one in the strict sense of the definition.
\end{remark}

\subsection{Example 3: Reduction for $S^2$}\label{Section: S^2}
Consider the sphere $S^2$ equipped with the round metric (that is, the induced metric from the embedding $S^2 \hookrightarrow \mathbb{R}^3$). Let $\{e_1, e_2, e_3\}$ be a basis for $\R^3$ such that $q_0:= e_3$ is a point-obstacle that we wish to avoid. It is well known that $S^2$ is a Riemannian symmetric space. In particular, we have $S^2 \cong \SO(3)/\text{Stab}(e_3)$, where $\SO(3)$ is equipped with the bi-invariant metric discussed in Section \ref{section: example_geo_SO3}. Note that we may identify Stab$(e_3) \cong \SO(2)$. Moreover, the projection map $\pi: \SO(3) \to S^2$ is given by $\pi(R) = Re_3$ for all $R \in \SO(3)$. 

Following Remark \ref{remark: potentials_local}, we first consider the artificial potential $\tilde{V}: \SO(3)^2 \to \R$ defined by $\tilde{V}(g) = \frac{\tau}{1 + (d_{\SO(3)}(g, g_0)/D)^{2N}}$ some $\tau, D > 0$ and $N \in \N$, where $R_0 \in \pi^{-1}(\{e_3\})$. If $R$ satisfies \eqref{lift_cubic}, then from Proposition \ref{prop: reduction_sym_left}, Lemma \ref{Log}, and equation \eqref{potential_Log_homo}, it follows that $\hat{\Omega} := R^T \dot{R}$ and $H := R_0^T R$ must satisfy
\begin{align}
    \dddot{\Omega} + \Omega \times (\dot{\Omega} \times \Omega) + &\frac{2N\tau \phi(H)^{2N-1}}{\sin(\phi(H))D^{2N}(1 + (\phi(H)/D)^{2N})^2}(H - H^T)^\vee = 0, \label{eqq2_sym_bi_S2} \\
    \dot{H} &= H\hat{\Omega}, \label{eqqh_sym_bi_S2}
\end{align}
where $\phi(H) := \arccos(\frac12(\tr(H) - 1))$. Moreover, we have the reconstruction equation $R = R_0 H$, from which $q$ may be determined via $q = Re_3$.

\section{Conclusions}\label{sec:conclusions}
Along this paper, we have considered the variational obstacle avoidance problem on Lie groups and Riemannian homogeneous spaces. In both cases, reduced necessary conditions for optimality were derived. A number of special cases were examined in which the Riemannian distance—and in turn the obstacle avoidance potential—can be calculated explicitly. In the case of Riemannian homogeneous spaces, we consider a new variational problem written in terms of a connection on the horizontal bundle of the underlying Lie group. Through this formalism, we are able to derive the necessary conditions for optimality and reduce them with a symmetry breaking potential. Applications to obstacle avoidance for rigid bodies on $\SO(3)$ and $S^2$ were also considered. 

For future work, we would like to consider the inverse problem for Riemannian homogeneous spaces. That is, to understand the conditions under which solutions to the variational principle will be horizontal (and thus correspond to the horizontal lifts of solutions in the homogeneous space). We would also like to consider the design of potentials that may be used in the case where the Lie group only admits left-invariant metrics, or how approximate solutions can be generated to increase the utility of variational obstacle avoidance in path-planning strategies. 
%\section*{Acknowledgments}

\bibliographystyle{siamplain}
\bibliography{references}

\end{document}